\documentclass[review]{elsarticle}
\makeatletter
\def\ps@pprintTitle{%
	\let\@oddhead\@empty
	\let\@evenhead\@empty
	\def\@oddfoot{\centerline{\thepage}}%
	\let\@evenfoot\@oddfoot}
\makeatother
\usepackage{lineno,hyperref}
\modulolinenumbers[5]

\usepackage{amsmath,amssymb,epsfig,amsthm,bm}
\usepackage{a4wide,xcolor,eucal,enumerate,mathrsfs,graphics,caption,subfig,booktabs,verbatim}
\usepackage{dsfont,natbib}

\bibliographystyle{elsarticle-harv}
\biboptions{authoryear}
\journal{}
\theoremstyle{definition}
\newtheorem{de}{Definition}[section]

\theoremstyle{plain}
\newtheorem{theo}[de]{Theorem}
\newtheorem{lemma}[de]{Lemma}

\newtheorem{cor}[de]{Corollary}
\theoremstyle{remark}
\newtheorem{re}[de]{Remark}

\newcommand{\R}{\mathbb{R}}

\newcommand{\p}{\mathbb{P}}

\newcommand{\E}{\mathbb{E}}

\newcommand{\F}{\mathcal{F}}

\newcommand{\1}{\mathds{1}}

\begin{document}

\begin{frontmatter}
	
	\title{Isotonized smooth estimators of a monotone baseline hazard in the Cox model}
	
	
	\author[mymainaddress]{Hendrik P.~Lopuha\"a}
	
	\author[mymainaddress]{Eni Musta\corref{mycorrespondingauthor}}
	\cortext[mycorrespondingauthor]{Corresponding author}
	\ead{e.musta@tudelft.nl}
	
	\address[mymainaddress]{DIAM, Faculty EEMCS, Delft University of Technology, Mekelweg 4, 2628 CD Delft, The Netherlands}

\begin{abstract}
We consider two isotonic smooth estimators for a monotone baseline hazard in the Cox model,
a maximum smooth likelihood estimator and a Grenander-type estimator based on the smoothed Breslow estimator for the
cumulative baseline hazard.
We show that they are both asymptotically normal at rate $n^{m/(2m+1)}$, where $m\geq 2$ denotes the level of smoothness considered,
and we relate their limit behavior to kernel smoothed isotonic estimators studied in~\cite{LopuhaaMustaSI2016}.
It turns out that the Grenander-type estimator is asymptotically equivalent to the kernel smoothed isotonic estimators,
while the maximum smoothed likelihood estimator exhibits the same asymptotic variance but a different bias.
Finally, we present numerical results on pointwise confidence intervals that illustrate the comparable behavior of the two methods.
\end{abstract}
\begin{keyword}
	isotonic estimation
	\sep 
	hazard rate
	\sep
	kernel smoothing
	\sep
	asymptotic normality
	\sep
	Cox regression model
	\sep
	isotonized smoothed Breslow estimator
	\sep
	 maximum smoothed likelihood estimator
	 \end{keyword}
\end{frontmatter}

\section{Introduction}
For studying lifetime distributions in the presence of right censored survival data,
the Cox regression model is a very popular method that allows incorporation of covariates.
The fact that the regression coefficients (parametric component) can be estimated while leaving the baseline distribution (nonparametric component) unspecified,
together with its ease of interpretation, resulting from the formulation in terms of the hazard rate,
as well as the proportional effect of the covariates,
favor the wide use of this semi-parametric model, especially in medical applications.
Since its first introduction (see~\cite{Cox72}), much effort has been spent on giving a firm mathematical basis to this approach.
Initially, the attention was on the derivation of large sample properties of the maximum partial likelihood estimator of the regression coefficients and of the Breslow estimator for the cumulative baseline hazard (e.g., see~\cite{Efron72}, \cite{Cox75}, \cite{Tsiatis81}).
Although the most attractive property of this approach is that it does not assume any fixed shape on the hazard curve, there are several cases
where order restrictions, such as monotonicity, better match the practical expectations.
An example can be found in~\cite{Geloven13}, and other references therein,
concerning a large clinical trial for patients with acute coronary syndrome that exhibit a decreasing risk pattern.
Traditional nonparametric estimators, such as the Kaplan-Meier, Nelson-Aalen, or Breslow estimator, do not incorporate
a decreasing risk pattern, and a monotone nonparametric estimate of the hazard rate is called for.
Estimation of the baseline hazard function under monotonicity constraints was first studied in~\cite{CC94} and
more recently by~\cite{LopuhaaNane2013}, who investigate the maximum likelihood estimator and a Grenander-type estimator
defined as the slope of the greatest convex minorant (or least concave majorant) of the Breslow estimator.

Traditional isotonic estimators, such as maximum likelihood estimators and Grenander-type estimators,
are step functions that exhibit a non normal limit distribution at rate~$n^{1/3}$.
On the other hand, a long stream of research has shown that,
if one is willing to assume more regularity on the function of interest,
smooth estimators can be used to achieve a faster rate of convergence to a Gaussian distributional law and to estimate derivatives.
Typically, these estimators are constructed by combining an isotonization step with a smoothing step.
Estimators constructed by smoothing followed by an isotonization step have been considered
in~\cite{chenglin1981}, \cite{wright1982}, \cite{friedmantibshirani1984}, and~\cite{ramsay1998}, for the regression setting,
in~\cite{vdvaart-vdlaan2003} for estimating a monotone density,
and in~\cite{eggermont-lariccia2000}, who consider maximum smoothed likelihood estimators for monotone densities.
Methods that interchange the smoothing step and the isotonization step,
can be found in~\cite{mukerjee1988}, \cite{DGL13}, and~\cite{LM15}, who study kernel smoothed isotonic estimators.
Comparisons between isotonized smooth estimators and smoothed isotonic estimators are made in~\cite{mammen1991} for the regression setting,
in~\cite{GJW10} for the current status model,
and in~\cite{GJ13}, who investigate a smoothed maximum likelihood estimator and a penalized least squares estimator for a monotone hazard.

In~\cite{Nane}, several smooth monotone estimators for a monotone baseline hazard in the Cox model have been introduced, which were shown to be consistent.
Two of these methods are kernel smoothed versions of the maximum likelihood estimator and the Grenander-type estimator from~\cite{LopuhaaNane2013}.
Both methods have been studied by~\cite{LopuhaaMustaSI2016} and were shown to be asymptotically normal at rate $n^{m/(2m+1)}$,
where $m$ denotes the level of smoothness of the baseline hazard.

In this paper we investigate two other estimators,
for which the order of the smoothing step and the isotonization step is interchanged.
The first estimator that we consider is the maximum smoothed likelihood estimator.
This estimator is obtained by first smoothing the loglikelihood of the Cox model
and then find the maximizer of the smoothed likelihood among all decreasing baseline hazards.
By first smoothing the loglikelihood, one avoids the discrete behavior of the traditional MLE.
This approach is similar to the methods in~\cite{eggermont-lariccia2000} for monotone densities and in~\cite{GJW10} for the current status model.
The second estimator is a Grenander-type estimator based on the smoothed Breslow estimator.
Grenander-type estimators for a nondecreasing curve are obtained as the left-derivative of the greatest convex minorant of a naive nonparametric estimator
for the integrated curve of interest, see~\cite{grenander1956} and also~\cite{durot2007} among others.
For our setup, the smoothed Breslow estimator serves as an estimator for the cumulative baseline hazard.
By smoothing the Breslow estimator, one avoids the discrete behavior of the left-derivative of its least concave majorant.
This second approach is similar to the methods considered in~\cite{chenglin1981}, \cite{wright1982}, \cite{friedmantibshirani1984}, and~\cite{vdvaart-vdlaan2003},
and to one of the two methods studied in~\cite{mammen1991}.
Asymptotic normality at rate $n^{m/(2m+1)}$ is established for both estimators, for which we rely on techniques developed in~\cite{GJW10}.
The key idea is that the isotonized smooth estimator can be represented as a least squares projection of a naive smooth estimator.
The latter estimator is not monotone, but much simpler to analyze and it is shown to be asymptotically equivalent to the smooth isotonic estimator.

The isotonized smoothed Breslow estimator is shown to be asymptotically equivalent to the smoothed Grenander-type estimator studied in~\cite{LopuhaaMustaSI2016}.
This means that the order of smoothing and isotonization is irrelevant, which is in line with the findings in~\cite{mammen1991}.
The maximum smoothed likelihood estimator exhibits the same variance as the previous ones but has a different asymptotic bias,
a phenomenon that was also encountered in~\cite{GJW10}.
A small simulation study shows that no method performs strictly better than the other.

The paper is organized as follows.
In Section~\ref{sec:model} we specify the Cox regression model and provide some background information that will be used  in the sequel.
The maximum smoothed likelihood estimator is considered in Section~\ref{sec:MSLE}
and the isotonized smoothed Breslow estimator in Section~\ref{sec:GS}.
We only consider the case of a non-decreasing baseline hazard.
The same results can be obtained similarly for a non-increasing hazard.
The results of a small simulation study are reported in Section~\ref{sec:conf-int}.
All the proofs have been put in an appendix at the end of the paper.

\section{The Cox regression model}\label{sec:model}
Let $X_1,\dots,X_n$ be an i.i.d.~sample representing the survival times of $n$ individuals, which can be observed only on time intervals $[0,C_i]$ for some i.i.d. censoring times $C_1,\dots,C_n$.
The observations consists of  i.i.d. triplets $(T_1,\Delta_1,Z_1),\dots,(T_n,\Delta_n,Z_n)$, where $T_i=\min(X_i,C_i)$ denotes the follow up time,  $\Delta_i=\1_{\{X_i\leq C_i\}}$ is the censoring indicator, and $Z_i\in\R^p$ is a time independent covariate vector.
Given the covariate vector $Z,$ the event time $X$ and the censoring time $C$ are assumed to be independent. Furthermore, conditionally on $Z=z,$ the event time is assumed to be a nonnegative random variable with an absolutely continuous distribution function~$F(x|z)$ and density $f(x|z).$
Similarly the censoring time is assumed to be a nonnegative r.v. with an absolutely continuous distribution function $G(x|z)$ and density~$g(x|z).$
The censoring mechanism is assumed to be non-informative, i.e., $F$ and $G$ share no parameters.
Within the Cox model, the conditional hazard rate $\lambda(x|z)$ for a subject with covariate vector~$z\in\R^p$, is related to the corresponding covariate by
\begin{equation}
\label{eq:cox model}
\lambda(x|z)=\lambda_0(x)\,\mathrm{e}^{\beta'_0z},\quad x\in\R^+,
\end{equation}
where $\lambda_0$ represents the baseline hazard function, corresponding to a subject with $z=0$, and~$\beta_0\in\R^p$ is the vector of the regression coefficients.

Let $H$ and $H^{uc}$ denote respectively the distribution function of the follow-up time and the sub-distribution function of the uncensored observations, i.e.,
\begin{equation}
\label{eq:def Huc}
H^{uc}(x)=\p(T\leq x,\Delta=1)=\int \delta\1_{\{t\leq x\}}\,\mathrm{d}\mathbb{P}(t,\delta,z),
\end{equation}
where $\p$ is the distribution of $(T,\Delta,Z)$.
We also require the following assumptions, some of which are common in large sample studies of the Cox model:
\begin{itemize}
\item[(A1)]
Let $\tau_F,\,\tau_G$ and $\tau_H$ be the end points of the support of $F,\,G$ and $H$. Then
\[
\tau_H=\tau_G<\tau_F\leq\infty.
\]
\item[(A2)]
There exists $\epsilon>0$ such that
\[
\sup_{|\beta-\beta_0|\leq\epsilon}\E\left[|Z|^2
\mathrm{e}^{2\beta'Z}
\right]<\infty.
\]
\end{itemize}
Let us briefly comment on these assumptions.
While the first one tells us that, at the end of the study there is at least one subject alive,
the second one is somewhat hard to justify from a practical point of view.
Condition (A2) was used in~\cite{Tsiatis81}, to establish asymptotic normality of $\hat\beta_n$,
and in~\cite{LopuhaaNane2013}, to ensure a squared integrable envelope for certain classes of functions
when using empirical process theory.
We require~(A2) essentially to apply results from~\cite{Tsiatis81} and~\cite{LopuhaaNane2013},
but this condition is also useful to bound averages that involve differences of the type
$\exp\{\hat\beta_n'Z_i\}-\exp\{\beta_0'Z_i\}$.
One can think of (A2) as a condition on the boundedness of the second moment of the covariates,
uniformly for $\beta$ in a neighborhood of~$\beta_0$.
Although, at first sight, it seems complicated, condition~(A2) is easy to verify in some important cases such as bounded covariates.

By now, it seems to be rather a standard choice to estimate $\beta_0$ in~\eqref{eq:cox model} by $\hat{\beta}_n$, the maximizer of the partial likelihood function
\[
L(\beta)
=
\prod_{i=1}^m
\frac{\mathrm{e}^{\beta'Z_i}}{\sum_{j=1}^n
\1_{\{T_j\geq X_{(i)}\}}\mathrm{e}^{\beta'Z_j}}
\]
as proposed in~\cite{Cox72} and~\cite{Cox75},
where $0<X_{(1)}<\cdots<X_{(m)}<\infty$ denote the ordered, observed event times.
The asymptotic behavior of $\hat{\beta}_n$ was first studied by~\cite{Tsiatis81}.
We aim at estimating $\lambda_0$, subject to the constraint that it is increasing (the case of a decreasing hazard is analogous), on the basis of $n$ observations $(T_1,\Delta_1,Z_1),\dots,(T_n,\Delta_n,Z_n)$.  We  refer to the quantity
\[
\Lambda_0(t)=\int_0^t\lambda_0(u)\,\mathrm{d}u,
\]
as the cumulative baseline hazard and, by introducing
\begin{equation}
\label{eq:def Phi}
\Phi(x;\beta)=\int \1_{ \{t\geq x\}}\,\mathrm{e}^{\beta'z}\,\mathrm{d}\p(t,\delta,z),
\end{equation}
we have
\begin{equation}
\label{eqn:lambda0}
\lambda_0(x)
=
\frac{h(x)}{\Phi(x;\beta_0)},
\end{equation}
where $h(x)=\mathrm{d}H^{uc}(x)/\mathrm{d}x$
(e.g., see (9) in~\cite{LopuhaaNane2013}).
For $\beta\in\R^p$ and $x\in\R$, the function $\Phi(x;\beta)$ can be estimated by
\begin{equation}
\label{eq:def Phin}
\Phi_n(x;\beta)=\int \1_{\{t\geq x\}} \mathrm{e}^{\beta'z}\,\mathrm{d}\p_n(t,\delta,z),
\end{equation}
where $\p_n$ is the empirical measure of the triplets $(T_i,\Delta_i,Z_i)$ with $i=1,\dots,n.$  Moreover, in Lemma 4 of~\cite{LopuhaaNane2013} it is shown that
\begin{equation}
\label{eqn:Phi}
\sup_{x\in\R}|\Phi_n(x;\beta_0)-\Phi(x;\beta_0)|=O_p(n^{-1/2}).
\end{equation}
It will often be used throughout the paper that a stochastic bound of the same order also holds
for the distance between
the cumulative baseline hazard $\Lambda_0$ and
the Breslow estimator
\begin{equation}
\label{eq:Breslow}
\Lambda_n(x)=\int \frac{\delta\1_{\{ t\leq x\}}}{\Phi_n(t;\hat{\beta}_n)}\,\mathrm{d}\p_n(t,\delta,z),
\end{equation}
but only on intervals staying away from the right boundary, i.e.,
\begin{equation}
\label{eqn:Breslow}
\sup_{x\in[0,M]}|\Lambda_n(x)-\Lambda_0(x)|=O_p(n^{-1/2}), \qquad\text{for all }0<M<\tau_H,
\end{equation}
(see Theorem 5 in~\cite{LopuhaaNane2013}).

Smoothing is done by means of kernel functions.
We will consider kernel functions $k$ that are $m$-orthogonal, for some $m\geq 1$,
which means that~$\int |k(u)||u|^m\,\mathrm{d}u<\infty$ and
$\int k(u)u^j\,\mathrm{d}u=0$, for $j=1,\ldots,m-1$, if $m\geq 2$.
We assume that
\begin{equation}
\label{def:kernel}
\begin{split}
&
\text{$k$ has bounded support $[-1,1]$ and is such that $\int_{-1}^1 k(y)\,\mathrm{d}y=1$;}\\
&
\text{$k$ is twice differentiable with a bounded derivative.}
\end{split}
\end{equation}
We denote by $k_b$ its scaled version $k_b(u)=b^{-1}k(u/b)$.
Here $b=b_n$ is a bandwidth that depends on the sample size, in such a way that
$0<b_n\to 0$ and $nb_n\to\infty$, as $n\to\infty$.
From now on, we will simply write $b$ instead of $b_n$.
Note that if $m>2$, the kernel function $k$ necessarily attains negative values and as a result also the smooth estimators of the baseline hazard defined in
Sections~\ref{sec:MSLE} and~\ref{sec:GS} may be negative.
To avoid this, one could restrict oneself to $m=2$.
In that case, the most common choice is to let $k$ be a symmetric probability density.


\section{Maximum smooth likelihood estimator}
\label{sec:MSLE}
Maximum smoothed likelihood estimation is studied in~\cite{eggermont-lariccia2000},
who obtain $L_1$-error bounds for the maximum smoothed likelihood estimator of a monotone density.
This method was also considered in~\cite{GJW10} for estimating the distribution function of interval censored observations.
The approach is to smooth the loglikelihood and then maximize the smoothed loglikelihood over all monotone functions of interest.
For a fixed $\beta$, the (pseudo) loglikelihood for the Cox model can be expressed as
\begin{equation}
\label{def:loglikelihood cox}
\int
\left(
\delta\log \lambda_0(t)-\mathrm{e}^{\beta'z}\int_0^t\lambda_0(u)\,\mathrm{d}u
\right)\,\mathrm{d}\p_n(t,\delta,z),
\end{equation}
(see (2) in~\cite{LopuhaaNane2013}).
To construct the maximum smoothed likelihood estimator (MSLE) we replace~$\p_n$ in the previous expression with the smoothed empirical measure (in the time direction),
\[
\mathrm{d}\tilde{\p}_n(t,\delta,z)
=
\frac{1}{n}\sum_{i=1}^n \1_{(\Delta_i,Z_i)}(\delta,z)\,k_b(t-T_i)\,\mathrm{d}t,
\]
and then maximize the smoothed (pseudo) loglikelihood
\begin{equation}
\label{def:smooth likelihood}
\ell^s_\beta(\lambda_0)
=
\int
\left(
\delta\log \lambda_0(t)-\mathrm{e}^{\beta'z}\int_0^t\lambda_0(u)\,\mathrm{d}u
\right)\,\mathrm{d}\tilde{\p}_n(t,\delta,z).
\end{equation}
The characterization of the MSLE is similar to that of the ordinary MLE
(see Lemma~1 in~\cite{LopuhaaNane2013}).
It involves the following processes.
Fix $\beta\in\R^p$ and let
\begin{equation}
\label{eqn:v_n w_n}
\begin{split}
w_n(t;\beta)
&=
\frac{1}{n}\sum_{i=1}^n
\mathrm{e}^{\beta'Z_i}\int_{t}^\infty k_b(u-T_i)\,\mathrm{d}u,\\
v_n(t)
&=
\frac{1}{n}
\sum_{i=1}^n \Delta_ik_b(t-T_i).
\end{split}
\end{equation}
The next lemma characterizes the maximizer of $\ell^s_\beta$.
The proof can be found in Appendix~\ref{subsec:proofs MSLE}.

\begin{lemma}
\label{lem:char MSLE}
Let $\ell^s_\beta$, $w_n$ and $v_n$ be defined by~\eqref{def:smooth likelihood} and~\eqref{eqn:v_n w_n}, respectively.
The unique maximizer of $\ell^s_\beta$ over all nondecreasing positive functions $\lambda_0$ can be described as the slope of the greatest convex minorant (GCM)
of the continuous cumulative sum diagram
\begin{equation}
\label{eqn:graph}
t\mapsto \left(\int_0^t w_n(x;\beta)\,\mathrm{d}x, \int_0^t v_n(x)\,\mathrm{d}x\right),
\qquad t\in[0,\tau_\beta],
\end{equation}
where $\tau_\beta
=
\sup\{t\geq 0:w_n(t;\beta)>0\}$.
\end{lemma}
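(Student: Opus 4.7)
The plan is first to rewrite $\ell_\beta^s(\lambda_0)$ in a form that exposes the roles of $v_n$ and $w_n$. Substituting the definition of $\tilde{\p}_n$ into \eqref{def:smooth likelihood} and interchanging the order of integration (which is justified because $k_b$ has bounded support, so all integrals are over compact sets) yields, after applying Fubini to the drift term via
\[
\int \mathrm{e}^{\beta'z}\int_0^t\lambda_0(u)\,\mathrm{d}u\,k_b(t-T_i)\,\mathrm{d}t
=
\int_0^\infty \lambda_0(u)\,\mathrm{e}^{\beta'Z_i}\int_u^\infty k_b(t-T_i)\,\mathrm{d}t\,\mathrm{d}u,
\]
the compact representation
\[
\ell_\beta^s(\lambda_0)
=
\int_0^\infty \log\lambda_0(t)\,v_n(t)\,\mathrm{d}t
-\int_0^\infty \lambda_0(t)\,w_n(t;\beta)\,\mathrm{d}t.
\]
Since $w_n(t;\beta)=0$ for $t>\tau_\beta$, the second integral is effectively over $[0,\tau_\beta]$, and any behavior of $\lambda_0$ beyond $\tau_\beta$ can only affect the first integral; monotonicity then allows one to reduce the problem to optimization on $[0,\tau_\beta]$.

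Second, I would observe that the resulting functional is strictly concave in $\lambda_0$ on the convex cone of nondecreasing positive functions (strict concavity of $\log$), so if a maximizer exists it is unique. This is the analogue of the situation in Lemma~1 of~\cite{LopuhaaNane2013}, with the (discrete) empirical measures replaced by their smoothed, absolutely continuous counterparts $v_n(t)\,\mathrm{d}t$ and $w_n(t;\beta)\,\mathrm{d}t$. Once in this continuous form, the classical GCM characterization applies: setting $V(t)=\int_0^t v_n(x)\,\mathrm{d}x$ and $W(t)=\int_0^t w_n(x;\beta)\,\mathrm{d}x$, the change of variables $s=W(t)$ (valid because $w_n(\cdot;\beta)>0$ on $[0,\tau_\beta)$) and $\tilde g(s)=\lambda_0(W^{-1}(s))$ transforms the problem into maximizing
\[
\int \log \tilde g(s)\,\mathrm{d}(V\circ W^{-1})(s)-\int \tilde g(s)\,\mathrm{d}s
\]
over nondecreasing positive $\tilde g$, whose unique solution is well known to be the left-derivative of the GCM of $s\mapsto V(W^{-1}(s))$.

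Third, I would translate back: the slope at $s=W(t)$ of the GCM of $V\circ W^{-1}$ coincides with the left-slope at $t$ of the GCM of the parametric cusum diagram $t\mapsto (W(t),V(t))=\bigl(\int_0^t w_n(x;\beta)\,\mathrm{d}x,\int_0^t v_n(x)\,\mathrm{d}x\bigr)$, which is exactly the claimed characterization. Alternatively, one can bypass the change of variables entirely by verifying the standard Fenchel/KKT conditions directly: for any maximizer $\hat\lambda$, perturbations of the form $\hat\lambda+\epsilon(g-\hat\lambda)$ with $g$ nondecreasing yield the variational inequality
\[
\int_0^T\Bigl(\frac{1}{\hat\lambda(t)}\,v_n(t)-w_n(t;\beta)\Bigr)(g(t)-\hat\lambda(t))\,\mathrm{d}t\le 0
\]
for every admissible $g$, and monotone test functions transform this into the familiar GCM characterization (the derivative characterization of the left-slope of the convex minorant of one cumulative curve with respect to another).

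The main obstacle is purely technical: verifying the validity of the Fubini interchange and the change-of-variables step on the boundary where $w_n(\cdot;\beta)$ vanishes, and establishing existence of the maximizer (the passage from a formal characterization to a genuine optimizer). Everything else is a direct adaptation of the argument used for the (unsmoothed) MLE in~\cite{LopuhaaNane2013}, with discrete jumps replaced by the continuous densities $v_n$ and $w_n(\cdot;\beta)$; this absolute continuity actually simplifies some of the arguments compared with the original MLE characterization.
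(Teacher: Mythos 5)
Your opening reduction is exactly the paper's: substituting $\mathrm{d}\tilde{\p}_n$, applying Fubini to the drift term, and arriving at
\[
\ell_\beta^s(\lambda_0)
=\int_0^\infty \log\lambda_0(t)\,v_n(t)\,\mathrm{d}t
-\int_0^\infty \lambda_0(t)\,w_n(t;\beta)\,\mathrm{d}t
\]
is identical to the first step in the paper's proof. Where you diverge is in how you close the argument. The paper rewrites the integrand as $\{ (v_n/w_n)\log\lambda_0-\lambda_0\}\,w_n$, recognizes the maximization problem as the minimization of a Bregman-type divergence
$\int \Delta_\Phi\bigl(v_n(t)/w_n(t;\beta),\lambda(t)\bigr)w_n(t;\beta)\,\mathrm{d}t$ with $\Phi(u)=u\log u$, and then cites Theorem~1 of \cite{GJ10} on generalized continuous isotonic regression. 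That single citation delivers existence, uniqueness, the GCM characterization over the continuous cusum diagram~\eqref{eqn:graph}, \emph{and} the equivalence with the weighted least-squares projection $\psi(\lambda)$ in~\eqref{eqn:isot.regr} --- the latter being used critically right after the lemma to define the naive estimator. You instead change variables $s=W(t)$ to reduce to a ``classical'' monotone-likelihood problem, or alternatively invoke a KKT/variational inequality, and then translate back.

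The gap is that the steps you call ``well known'' or flag as ``purely technical'' are precisely what Theorem~1 of \cite{GJ10} supplies. In the continuous setting, existence of a maximizer over the cone of nondecreasing positive functions, and the passage from the variational inequality
$\int \bigl(v_n/\hat\lambda-w_n\bigr)(g-\hat\lambda)\,\mathrm{d}t\le0$
to the GCM-slope characterization, are nontrivial: there is no finite-dimensional Fenchel duality to fall back on, and one needs a continuous analogue of the min-max/switching argument (or the GJ10 machinery) to justify that the solution is exactly the left derivative of the GCM of the parametric cusum curve. Your strict-concavity observation secures uniqueness once a maximizer exists, but it does not produce the maximizer or its shape. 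Likewise, the change-of-variables route implicitly assumes the classical result for a continuous (as opposed to piecewise-linear) cumulative diagram, which is again the content of \cite{GJ10}, not of the discrete characterization in Lemma~1 of \cite{LopuhaaNane2013} that you point to. So the approach is sound and would close the lemma once those steps are justified, but as written it leaves the central characterization step unproven; citing \cite{GJ10} --- as the paper does --- is the efficient way to fill that hole and simultaneously obtain the $L^2$-projection representation used in the sequel.
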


For a fixed $\beta$, let $\hat{\lambda}^s_n(x;\beta)$ be the unique maximizer of $\ell^s_\beta(\lambda_0)$ over all nondecreasing positive functions $\lambda_0$.
We define the MSLE by
\begin{equation}
\label{def:lambdaMS}
\hat{\lambda}^{MS}_n(x)=\hat{\lambda}^s_n(x;\hat{\beta}_n),
\end{equation}
where $\hat{\beta}_n$ denotes the maximum partial likelihood estimator for $\beta_0$.
It can be seen that under appropriate smoothness assumptions,
\[
\begin{split}
\int_0^t w_n(x;\hat\beta_n)\,\mathrm{d}x
&=
\int
\hat W_n(s)k_b(t-s)\,\mathrm{d}s+O_p(n^{-1/2})+O_p(b),\\
\int_0^t v_n(x)\,\mathrm{d}x
&=
\int V_n(s)k_b(t-s)\,\mathrm{d}s+O_p(b),
\end{split}
\]
where the processes $V_n$ and $\hat W_n$, as defined in~Lemma~1 in~\cite{LopuhaaNane2013}, determine the cumulative sum diagram corresponding
to the ordinary MLE.
This means that the cumulative sumdiagram that characterizes the MSLE,
is asymptotically equivalent to a kernel smoothed version of the cumulative sumdiagram that characterizes the ordinary MLE.

As can be seen from the proof of Lemma~\ref{lem:char MSLE},
the MSLE minimizes
\begin{equation}
\label{eqn:isot.regr}
\psi(\lambda)=\frac{1}{2}\int \left(\lambda(x)-\frac{v_n(x)}{w_n(x;\hat\beta_n)}\right)^2
w_n(x;\beta)\,\mathrm{d}x,
\end{equation}
over all nondecreasing functions $\lambda$.
This suggests
\begin{equation}
\label{def:naive est MSLE}
\hat{\lambda}_n^{\mathrm{naive}}(x)=\frac{v_n(x)}{w_n(x;\hat{\beta}_n)}
\end{equation}
as a naive estimator for $\lambda_0$.
The naive estimator is the ratio of two smooth functions, being the
derivatives of the vertical and horizontal processes in the continuous cumulative sum diagram in~\eqref{eqn:graph}.
The naive estimator is smooth, but not necessarily monotone and its weighted least squares projection is the MSLE.
Figure~\ref{fig:MSLE}
\begin{figure}[t]
\includegraphics[width=\textwidth]{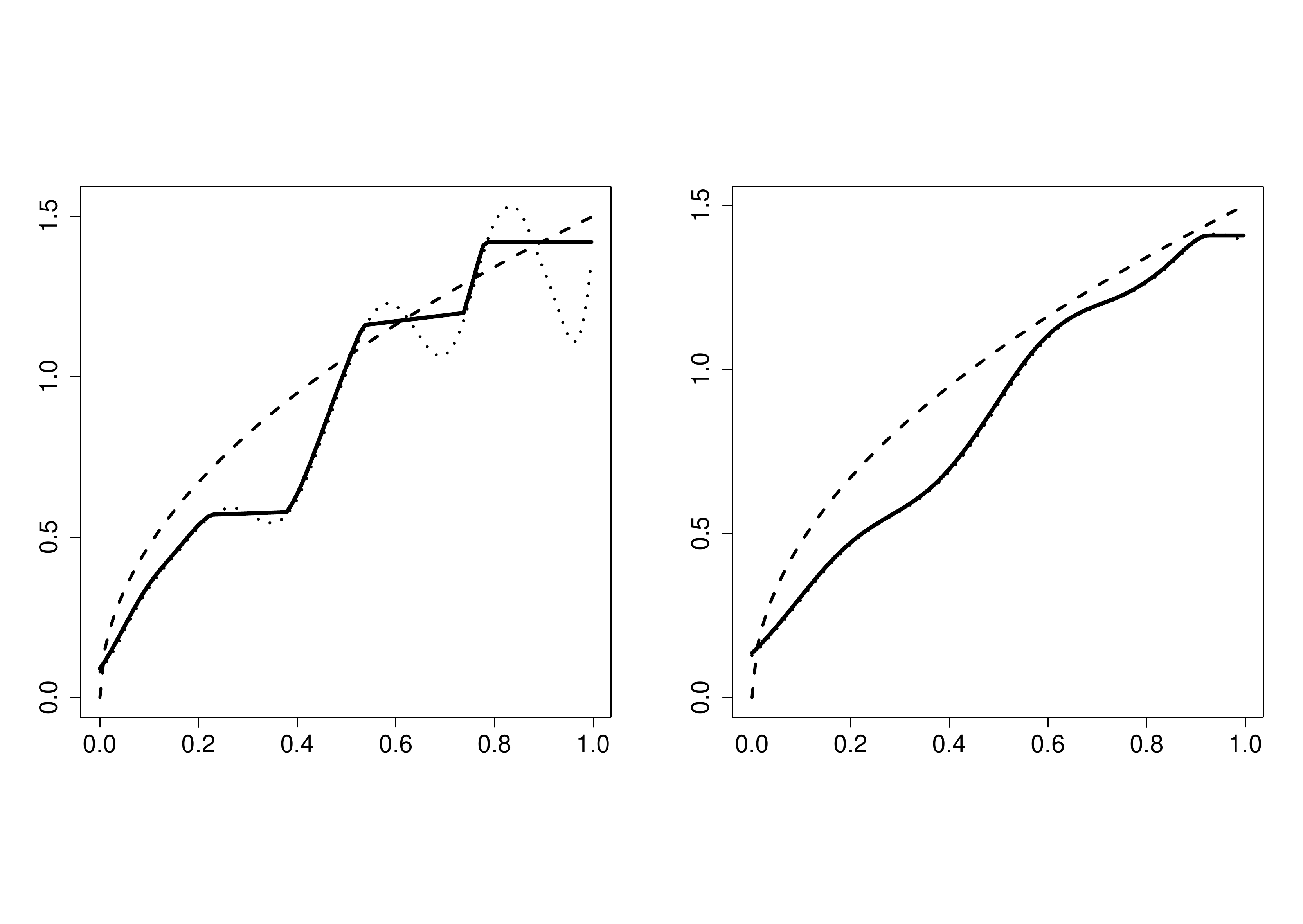}
\caption{Left panel: The MSLE (solid) and the naive estimator (dotted) of the hazard function (dashed) using bandwidth $b_n=0.5n^{-1/5}$.
Right panel: The same but using bandwidth $b_n=n^{-1/5}$.}
\label{fig:MSLE}
\end{figure}
illustrates the MSLE and the naive estimator for a sample of size $n=500$ from a Weibull baseline distribution with shape parameter $1.5$ and scale $1$.
For simplicity, the covariate and the censoring time are chosen to be uniformly $(0,1)$ distributed and we take $\beta_0=0.5$.
We used the triweight kernel function
$k(u)=(35/32)(1-u^2)^3\1_{\{|u|\leq 1\}}$
and bandwidth $b=n^{-1/5}$.
Note that if we use bandwidth $b_n=0.5n^{-1/5}$, the naive estimator is not monotone,
but the distance to the MSLE (which is the isotonic version of $\hat{\lambda}^{\mathrm{naive}}_n$) is very small.
On the other hand, for bandwidth $b_n=n^{-1/5}$ isotonization is not needed and the two estimators coincide.
Indeed, following the reasoning in~\cite{GJW10}, the derivation of the asymptotic distribution of $\hat{\lambda}^{MS}_n$
is based on showing that with probability converging to one,
the naive estimator will be monotone and equal to $\hat{\lambda}^{MS}_n$ on large intervals.
Consequently, it will be sufficient to find the asymptotic distribution of the naive estimator.
The advantage of this approach is that in this way we basically have to deal with the naive estimator, which is a more tractable process.

This approach applies more generally.
The situation for the MSLE is a special case of the more general situation,
where the isotonic estimator is the derivative $\mathrm{d} \widehat{Y}_n/\mathrm{d} X_n$
of the greatest convex minorant
$\{(X_n(t),\widehat{Y}_n(t)):t\in[0,\hat\tau]\}$ of the graph
$\{(X_n(t),Y_n(t)):t\in[0,\hat\tau]\}$, for some~$0<\hat\tau<\tau_H$,
where $X_n$ and $Y_n$ are differentiable processes in a
cumulative sumdiagram, whereas the naive estimator is the ratio~$\mathrm{d} Y_n/\mathrm{d} X_n$
of the derivatives of $X_n$ and $Y_n$.
The MSLE and the corresponding naive estimator from~\eqref{def:naive est MSLE} form a special case, with
$X_n=\tilde W_n$, $Y_n=\tilde V_n$, where
\begin{equation}
\label{def:tilde Wn tilde V_n}
\tilde W_n(t)=\int_0^t w_n(x;\hat{\beta}_n)\,\mathrm{d}x,
\qquad
\tilde V_n(t)=\int_0^t v_n(x)\,\mathrm{d}x,
\end{equation}
and $\hat\tau=\sup\{t\geq 0:w_n(t;\hat\beta_n)>0\}$.
The following result considers the general setup and shows that, in that case, the isotonic estimator and the corresponding naive estimator
coincide on large intervals with probability tending to one.
\begin{lemma}
\label{lem:coincide}
Let $X_n$ and $Y_n$ be differentiable processes and let $\{(X_n(t),\widehat{Y}_n(t)):t\in[0,\hat\tau]\}$ be the greatest convex minorant
of the graph $\{(X_n(t),Y_n(t)):t\in[0,\hat\tau]\}$, for some $0<\hat\tau<\tau_H$.
Let $\lambda_n^{IS}(t)=\mathrm{d}\widehat{Y}_n(t)/\mathrm{d}X_n(t)$ and
$\hat\lambda_n(t)=\mathrm{d}Y_n(t)/\mathrm{d}X_n(t)$,
for $t\in[0,\hat\tau]$.
Suppose that
\begin{enumerate}[(a)]
\item
\label{cond a}
$X_n(s)\leq X_n(t)$, for $0\leq s\leq t\leq \hat\tau$;
\item
\label{cond b}
for every $t\in(0,\hat\tau)$ fixed, $\hat\lambda_n(t)\to \lambda_0(t)$, in probability;
\item
\label{cond c}
for all $0<\ell<M<\hat\tau$ fixed,
$\mathbb{\mathbb{P}}\big(\hat{\lambda}_n\text{ is increasing on } [\ell,M]\big)\to 1$;
\item
\label{cond d}
there exists processes $X_0$ and $Y_0$, such that
\[
\sup_{t\in[0,\hat\tau]}
\left|X_n(t)-X_0(t)\right|\xrightarrow{P}0,
\qquad
\sup_{t\in[0,\hat\tau]}
\left|Y_n(t)-Y_0(t)\right|\xrightarrow{P}0.
\]
Moreover, the process $X_0$ is absolutely continuous with a strictly positive nonincreasing derivative $x_0$, and $X_0$ and $Y_0$ are related by
$Y_0(t)=\int_0^t \lambda_0(u)\,\mathrm{d}X_0(u)$.
\end{enumerate}
Then, for all $0<\ell<M<\hat\tau$, $\p
\left(
\hat{\lambda}_n(t)=\hat{\lambda}_n^{MS}(t),\text{ for all } t\in[\ell,M]
\right)\to 1$.
\end{lemma}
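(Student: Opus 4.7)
The plan is to show that, with probability tending to one, the naive estimator $\hat{\lambda}_n$ coincides with $\lambda_n^{IS}$ on $[\ell,M]$ (the conclusion as stated writes $\hat\lambda_n^{MS}$, but in the general setup of the lemma this is $\lambda_n^{IS}$). The geometric criterion is that the GCM touches the graph $\{(X_n(s),Y_n(s))\}$ at exactly those $t$ at which a supporting line exists, i.e., at which the affine function $s\mapsto Y_n(t)+\hat\lambda_n(t)(X_n(s)-X_n(t))$ stays below $Y_n$ on all of $[0,\hat\tau]$. Introducing the gap
\[
\Delta_n(t,s)=Y_n(s)-Y_n(t)-\hat\lambda_n(t)\bigl(X_n(s)-X_n(t)\bigr),
\]
it therefore suffices to prove that, with probability tending to one, $\Delta_n(t,s)\geq 0$ for every $(t,s)\in[\ell,M]\times[0,\hat\tau]$.

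Pick $\ell',M'$ with $0<\ell'<\ell<M<M'<\hat\tau$. By (c), with probability tending to one $\hat\lambda_n$ is increasing on $[\ell',M']$. Since $X_n$ is nondecreasing by (a), the change of variable $u=X_n(t)$ turns this monotonicity into convexity of $u\mapsto Y_n(X_n^{-1}(u))$ on $[X_n(\ell'),X_n(M')]$. In particular, for every $t\in[\ell,M]$ the tangent at $t$ lies on or below the graph throughout $[\ell',M']$, i.e., $\Delta_n(t,s)\geq 0$ for $s\in[\ell',M']$. It remains to verify the same inequality for $s\in[0,\ell']\cup[M',\hat\tau]$.

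For this we invoke (b) and (d). Condition (d) gives uniform convergence $X_n\to X_0$ and $Y_n\to Y_0$ on $[0,\hat\tau]$; combined with the pointwise convergence in (b), the monotonicity on $[\ell',M']$ from (c), and continuity of $\lambda_0$, one upgrades $\hat\lambda_n(t)\to\lambda_0(t)$ to uniform convergence on $[\ell,M]$. Hence $\Delta_n$ converges uniformly on $[\ell,M]\times[0,\hat\tau]$ to the deterministic gap
\[
\Delta_0(t,s)=Y_0(s)-Y_0(t)-\lambda_0(t)\bigl(X_0(s)-X_0(t)\bigr)=\int_t^s\bigl(\lambda_0(u)-\lambda_0(t)\bigr)\,\mathrm{d}X_0(u),
\]
where the second identity uses $Y_0(\cdot)=\int_0^{\cdot}\lambda_0\,\mathrm{d}X_0$ from (d). Since $\lambda_0$ is strictly increasing and $x_0>0$, $\Delta_0(t,s)>0$ whenever $s\notin[\ell',M']$ and $t\in[\ell,M]$; continuity in $(t,s)$ and compactness then yield a uniform lower bound $c>0$ on this set. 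The uniform convergence $\Delta_n\to\Delta_0$ transfers this to $\Delta_n\geq c/2$ with probability tending to one, which combined with the convexity statement above concludes the proof.

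The two technical obstacles are the following. First, (b) delivers only pointwise convergence of $\hat\lambda_n$, whereas the argument above needs uniformity in $t\in[\ell,M]$; the remedy is the classical dictum that pointwise convergence of monotone functions to a continuous monotone limit is automatically uniform on compact subintervals, applied on $[\ell',M']$ thanks to (c). Second, the strict positivity of $\Delta_0$ off the diagonal requires $\lambda_0$ to be genuinely increasing, which is the standing monotonicity hypothesis on the baseline hazard; the separation $\min(\ell-\ell',M'-M)>0$ between $t$ and the complement of $[\ell',M']$ is precisely what forces the integral defining $\Delta_0$ to be bounded away from zero.
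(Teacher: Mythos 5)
Your argument is correct and follows the same geometric idea as the paper, namely verifying a supporting-line criterion for the greatest convex minorant, with convexity on a slightly enlarged interval $[\ell',M']$ coming from condition (c), and positivity of the limiting gap $\Delta_0$ for $s$ outside $[\ell',M']$ coming from conditions (b), (d) and strict monotonicity of $\lambda_0$. You also correctly note that the conclusion should read $\lambda_n^{IS}$ rather than $\hat\lambda_n^{MS}$, which is a misprint in the lemma statement. The one genuine methodological difference is this: you check that the tangent at \emph{every} $t\in[\ell,M]$ is a global supporting line, which forces you to upgrade the pointwise convergence of $\hat\lambda_n$ in (b) to \emph{uniform} convergence on $[\ell,M]$, via a stochastic Dini--P\'olya argument using (c) and continuity of $\lambda_0$. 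The paper instead builds a single piecewise candidate $Y_n^*$ that equals $Y_n$ on $[\ell,M]$ and extends linearly with slopes $\hat\lambda_n(\ell)$ and $\hat\lambda_n(M)$ outside it; showing $Y_n^*$ is convex and lies below $Y_n$ sandwiches $\widehat Y_n$ between $Y_n^*$ and $Y_n$ on $[\ell,M]$. That construction needs (b) only at the two fixed points $\ell$ and $M$ (plus Cauchy's mean value theorem and the monotonicity from (c) to treat the small buffer zones $[\ell',\ell)$ and $(M,M']$), so it sidesteps the uniform-convergence upgrade altogether. Your route buys a somewhat more symmetric argument and the two-variable gap $\Delta_n(t,s)$ is a clean object, but it costs an extra lemma; the paper's route is more economical. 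Both are correct. A small technical point shared by both and left implicit: once $\widehat Y_n=Y_n$ on $[\ell,M]$ one still has to argue that the derivatives (and hence $\lambda_n^{IS}$ and $\hat\lambda_n$) agree at the endpoints $\ell,M$, which is immediate in the interior but takes a one-line convexity remark at the endpoints.
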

The proof of Lemma~\ref{lem:coincide} can be found in the Appendix~\ref{subsec:proofs MSLE}.
We will apply Lemma~\ref{lem:coincide} to the~MSLE and the naive estimator from~\eqref{def:naive est MSLE}.
Recall that~$\hat{\lambda}^{MS}_n$ and~$\hat{\lambda}_n^{\mathrm{naive}}$ are defined on $[0,\hat\tau_n]$, where
$\hat\tau_n=\sup\{t\geq 0:w_n(t;\hat{\beta}_n)>0\}$, and note that~$\hat\tau_n\to \tau_H$ with probability one.
Condition~\eqref{cond a} of Lemma~\ref{lem:coincide} is trivially fulfilled with $X_n=\tilde W_n$ defined in~\eqref{def:tilde Wn tilde V_n}.
A first key result is that for each $0<\ell<M<\tau_H$, it holds
\begin{equation}
\label{eqn:approx vn wn}
\begin{split}
\sup_{t\in[\ell,M]}|v_n(t)-h(t)|
&=
O(b^m)+O_p(b^{-1}n^{-1/2}),\\
\sup_{t\in[\ell,M]}|w_n(t;\hat{\beta}_n)-\Phi(t;\beta_0)|
&=
O(b^m)+O_p(b^{-1}n^{-1/2}),
\end{split}
\end{equation}
where $v_n$, $w_n$ and $\Phi$ are defined in~\eqref{eqn:v_n w_n} and~\eqref{eq:def Phi},
see Lemma~\ref{le:1}.
A direct consequence of~\eqref{eqn:approx vn wn} is the fact that the naive estimator converges to~$\lambda_0$ uniformly on
compact intervals within the support, as long as $b\to0$ and $1/b=o(n^{1/2})$, see Lemma~\ref{lem:cons-naive}.
In particular, this will ensure condition~\eqref{cond b} of Lemma~\ref{lem:coincide}.
A second key result is that, under suitable smoothness conditions,
for each $0<\ell<M<\tau_H$, it holds
\begin{equation}
\label{eqn:approx vn' wn'}
\begin{split}
\sup_{t\in[\ell,M]}|v'_n(t)-h'(t)|
&\xrightarrow{\p}0,\\
\sup_{t\in[\ell,M]}|w'_n(t;\hat{\beta}_n)-\Phi'(t;\beta_0)|
&
\xrightarrow{\p}0,
\end{split}
\end{equation}
where $v_n$, $w_n$ and $\Phi$ are defined in~\eqref{eqn:v_n w_n} and~\eqref{eq:def Phi},
see Lemma~\ref{le:1a}.
This will imply that the naive estimator is increasing on large intervals
with probability tending to one, see Lemma~\ref{lem:monotone},
which yields condition~\eqref{cond c} of Lemma~\ref{lem:coincide}.
Finally, condition~\eqref{cond d} of Lemma~\ref{lem:coincide} is shown to hold
with $X_0=H^{uc}$ from~\eqref{eq:def Huc} and $Y_0=W_0$, defined by
\begin{equation}
\label{def:W0}
W_0(t)=\int_0^t \Phi(x;\beta_0)\,\mathrm{d}x.
\end{equation}
In view of~~\eqref{eqn:approx vn wn} and~\eqref{eqn:lambda0},
this is to be expected, and it is made precise in Lemma~\ref{le:2}.
Hence, Lemma~\ref{lem:coincide} applies to the MSLE and the naive estimator from~\eqref{def:naive est MSLE}.
Therefore we have the following corollary.
\begin{cor}
\label{cor:MSLE=naive}
Suppose that (A1)-(A2) hold.
Let $H^{uc}(t)$ and $\Phi(t;\beta_0)$ be defined in~\eqref{eq:def Huc} and~\eqref{eq:def Phi}, and let $h(t)=\mathrm{d}H^{uc}(t)/\mathrm{d}t$,
satisfying~\eqref{eqn:lambda0}.
Suppose that $h$ and $t\mapsto\Phi(t;\beta_0)$ are continuously differentiable,
and that~$\lambda'_0$ is uniformly bounded from below by a strictly positive constant.
Let $k$ satisfy~\eqref{def:kernel} and let~$\hat{\lambda}_n^{\mathrm{naive}}$ be defined in~\eqref{def:naive est MSLE}.
If $b\to0$ and $1/b=O(n^{\alpha})$, for some $\alpha\in(0,1/4)$, then for each $0<\ell<M<\tau_H$,
\[
\p
\left(
\hat{\lambda}_n^{\mathrm{naive}}(x)=\hat{\lambda}_n^{MS}(x),\text{ for all } x\in[\ell,M]
\right)\to 1.
\]
Consequently, for all $x\in(0,\tau_H),$ the asymptotic distributions of $\hat{\lambda}_n^{\mathrm{naive}}(x)$ and $\hat{\lambda}_n^{MS}(x)$ are the same.
\end{cor}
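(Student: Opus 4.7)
The plan is to apply Lemma~\ref{lem:coincide} directly to the pair $(\hat\lambda_n^{\mathrm{naive}},\hat\lambda_n^{MS})$. I would identify the processes as $X_n(t)=\tilde W_n(t)=\int_0^t w_n(x;\hat\beta_n)\,\mathrm{d}x$ and $Y_n(t)=\tilde V_n(t)=\int_0^t v_n(x)\,\mathrm{d}x$ from~\eqref{def:tilde Wn tilde V_n}, with $\hat\tau=\hat\tau_n=\sup\{t\geq 0:w_n(t;\hat\beta_n)>0\}$, and take the limiting processes to be $X_0(t)=W_0(t)=\int_0^t\Phi(x;\beta_0)\,\mathrm{d}x$ from~\eqref{def:W0} and $Y_0(t)=H^{uc}(t)$ from~\eqref{eq:def Huc}. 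With these identifications, $\mathrm{d}Y_n/\mathrm{d}X_n=v_n/w_n(\cdot;\hat\beta_n)=\hat\lambda_n^{\mathrm{naive}}$, the isotonic derivative $\mathrm{d}\widehat Y_n/\mathrm{d}X_n$ is $\hat\lambda_n^{MS}$ by Lemma~\ref{lem:char MSLE}, and the relation $Y_0(t)=\int_0^t\lambda_0(u)\,\mathrm{d}X_0(u)$ required in condition~(d) is immediate from~\eqref{eqn:lambda0}, since $\mathrm{d}X_0(u)=\Phi(u;\beta_0)\,\mathrm{d}u$ and $\mathrm{d}Y_0(u)=h(u)\,\mathrm{d}u=\lambda_0(u)\Phi(u;\beta_0)\,\mathrm{d}u$.

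Conditions (a)--(d) then follow from the auxiliary results that the text has already announced. Condition~(a) is trivial because $w_n\geq 0$ makes $\tilde W_n$ nondecreasing. Condition~(b), the pointwise consistency of $\hat\lambda_n^{\mathrm{naive}}$, is delivered by Lemma~\ref{lem:cons-naive}, which rests on the uniform approximations~\eqref{eqn:approx vn wn}; note that the hypothesis $1/b=O(n^\alpha)$ with $\alpha\in(0,1/4)$ ensures that both $b^m$ and $b^{-1}n^{-1/2}$ tend to zero. Condition~(c), the monotonicity of $\hat\lambda_n^{\mathrm{naive}}$ on $[\ell,M]$ with probability tending to one, is the content of Lemma~\ref{lem:monotone}, which combines~\eqref{eqn:approx vn' wn'} from Lemma~\ref{le:1a} with the assumption that $\lambda_0'$ is bounded below by a strictly positive constant to force $(\hat\lambda_n^{\mathrm{naive}})'$ to be uniformly positive on $[\ell,M]$ with high probability. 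Condition~(d) is supplied by Lemma~\ref{le:2} for the uniform convergence, while the required properties of $x_0=\Phi(\cdot;\beta_0)$, namely strict positivity on $[0,\tau_H)$ and monotonicity, come respectively from assumption~(A1) and from the fact that $x\mapsto\int\1_{\{t\geq x\}}\mathrm{e}^{\beta_0'z}\,\mathrm{d}\p(t,\delta,z)$ is visibly nonincreasing in $x$.

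Once all four conditions are verified, Lemma~\ref{lem:coincide} yields the identity $\hat\lambda_n^{\mathrm{naive}}(x)=\hat\lambda_n^{MS}(x)$ for all $x\in[\ell,M]$ on an event of probability tending to one. The distributional claim at a fixed interior point $x\in(0,\tau_H)$ is then an immediate consequence: choosing any $0<\ell<x<M<\tau_H$ one has $\p\bigl(\hat\lambda_n^{\mathrm{naive}}(x)\neq\hat\lambda_n^{MS}(x)\bigr)\to 0$, so the two estimators share every subsequential distributional limit. The substantive work is thus entirely absorbed by the preceding lemmas; the only step that demands real attention in writing out the corollary is checking that the identifications $(X_n,Y_n,X_0,Y_0)$ truly fit the abstract framework of Lemma~\ref{lem:coincide}, and in particular that the derivative $x_0=\Phi(\cdot;\beta_0)$ inherits the nonincreasing property that that lemma requires.
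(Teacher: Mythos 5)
Your proof is correct and follows the same route as the paper: apply Lemma~\ref{lem:coincide} with $X_n=\tilde W_n$, $Y_n=\tilde V_n$, and cite Lemma~\ref{lem:cons-naive}, Lemma~\ref{lem:monotone}, and Lemma~\ref{le:2} together with~\eqref{eqn:lambda0} to verify conditions (a)--(d). Worth noting: your identification $X_0=W_0$, $Y_0=H^{uc}$ is the one that makes condition~(d) work (the text just before the corollary states them reversed, which appears to be a typo in the paper), and you are right that the one nontrivial bookkeeping item is confirming that $x_0=\Phi(\cdot;\beta_0)$ is a strictly positive nonincreasing derivative.
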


Under similar smoothness conditions as needed to obtain~\eqref{eqn:approx vn wn}, see Lemma~\ref{le:1}, one can show that
\begin{equation}
\label{eqn:lemma1-2a}
\begin{split}
\sup_{t\in[\ell,M]}|v'_n(t)-h'(t)|
&=
O(b^{m-1})+O_p(b^{-2}n^{-1/2}),\\
\sup_{t\in[\ell,M]}|w'_n(t;\hat{\beta}_n)-\Phi'(t;\beta_0)|
&=
O(b^{m-1})+O_p(b^{-1}n^{-1/2}).
\end{split}
\end{equation}
In that case, it can also be proved that
\[
\sup_{x\in[\ell,M]}\left|\frac{\mathrm{d}}{\mathrm{d}x}\hat{\lambda}_n^{\mathrm{naive}}(x)-\lambda'_0(x)\right|
=
O(b^{m-1})+O_p(b^{-2}n^{-1/2})=o_P(1),
\]
as long as $b\to0$ and $1/b^2=o(n^{1/2})$.
One would expect that if instead of a standard kernel we use a boundary corrected version,
then~\eqref{eqn:approx vn' wn'} would hold on the whole support~$[0,\tau_H]$
and consequently we would obtain that the naive estimator is monotone on $[0,\tau_H]$ with probability tending to one.
However, the use of boundary kernels makes the computations much more complicated.
Nevertheless, monotonicity on intervals $[\ell,M]$ is enough for our purposes,
because we aim at finding the pointwise asymptotic distribution at the interior of the support.

From Corollary~\ref{cor:MSLE=naive}, together with the fact that the naive estimator converges to~$\lambda_0$ uniformly on compact intervals within the support,
see Lemma~\ref{lem:cons-naive}, another consequence of Lemma~\ref{lem:coincide} is the following corollary
concerning uniform convergence of the MSLE.
\begin{cor}
\label{cor:cons-MS}
Suppose that (A1)-(A2) hold.
Let $H^{uc}(t)$ and $\Phi(t;\beta_0)$ be defined in~\eqref{eq:def Huc} and~\eqref{eq:def Phi}, and let $h(t)=\mathrm{d}H^{uc}(t)/\mathrm{d}t$,
satisfying~\eqref{eqn:lambda0}.
Suppose that $h$ and $t\mapsto\Phi(t;\beta_0)$ are $m\geq 1$ times continuously differentiable,
and that $\lambda_0'$ is uniformly bounded from below by a strictly positive constant.
Let $k$ be $m$-orthogonal satisfying~\eqref{def:kernel}.
Then, the maximum smooth likelihood estimator is uniformly consistent on compact intervals $[\ell,M]\subset (0,\tau_H)$:
\[
\sup_{x\in[\ell,M]}\left|\hat{\lambda}^{MS}_n(x)-\lambda_0(x)\right|
=
O(b^{m})+O_p(b^{-1}n^{-1/2}).
\]
\end{cor}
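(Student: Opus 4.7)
The plan is to reduce the statement to a pointwise bound on the naive estimator $\hat{\lambda}_n^{\mathrm{naive}}$ from~\eqref{def:naive est MSLE}, and then to control the latter by appealing directly to the uniform approximations in~\eqref{eqn:approx vn wn}. The first step is to invoke Corollary~\ref{cor:MSLE=naive}, which asserts that for any compact interval $[\ell,M]\subset(0,\tau_H)$, with probability tending to one,
\[
\hat{\lambda}_n^{MS}(x)=\hat{\lambda}_n^{\mathrm{naive}}(x)\quad\text{for every } x\in[\ell,M].
\]
On this event the supremum of $|\hat{\lambda}_n^{MS}-\lambda_0|$ over $[\ell,M]$ equals that of $|\hat{\lambda}_n^{\mathrm{naive}}-\lambda_0|$, so it suffices to bound the latter with the desired rate.

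For the naive estimator I would write, using $\hat{\lambda}_n^{\mathrm{naive}}(x)=v_n(x)/w_n(x;\hat{\beta}_n)$ together with $\lambda_0(x)=h(x)/\Phi(x;\beta_0)$ from~\eqref{eqn:lambda0}, the decomposition
\[
\hat{\lambda}_n^{\mathrm{naive}}(x)-\lambda_0(x)
=\frac{v_n(x)-h(x)}{w_n(x;\hat{\beta}_n)}
+\lambda_0(x)\,\frac{\Phi(x;\beta_0)-w_n(x;\hat{\beta}_n)}{w_n(x;\hat{\beta}_n)}.
\]
Since $x\mapsto\Phi(x;\beta_0)$ is positive and nonincreasing on $[0,\tau_H)$, we have $\Phi(x;\beta_0)\geq\Phi(M;\beta_0)>0$ on $[\ell,M]$. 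Combined with the uniform approximation for $w_n(\cdot;\hat{\beta}_n)$ in~\eqref{eqn:approx vn wn}, which is $o_p(1)$ under the stated bandwidth regime, this forces $w_n(x;\hat{\beta}_n)$ to be bounded away from zero on $[\ell,M]$ with probability tending to one. The continuity assumptions ensure that $\lambda_0$ is bounded on $[\ell,M]$, so applying~\eqref{eqn:approx vn wn} to the two numerators gives
\[
\sup_{x\in[\ell,M]}|\hat{\lambda}_n^{\mathrm{naive}}(x)-\lambda_0(x)|
=O(b^m)+O_p(b^{-1}n^{-1/2}),
\]
which is exactly the claimed rate.

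The only delicate bookkeeping concerns the bandwidth hypothesis. Corollary~\ref{cor:MSLE=naive} requires $1/b=O(n^{\alpha})$ for some $\alpha\in(0,1/4)$, whereas the present statement only posits that $k$ is $m$-orthogonal satisfying~\eqref{def:kernel}. However, for the conclusion to be informative the remainder $b^{-1}n^{-1/2}$ must tend to zero, i.e.~$nb^2\to\infty$; under this assumption (which I would either impose implicitly through ``consistency'' or argue separately that the claimed $O_p$-bound is vacuous otherwise) the identification of the MSLE with the naive estimator supplied by Corollary~\ref{cor:MSLE=naive} applies, and the proof is complete. This bandwidth compatibility is the only subtlety; once it is noted, the argument is the straightforward quotient estimate above fed with the rates from Lemma~\ref{le:1}.
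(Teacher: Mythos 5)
Your proposal is correct and follows essentially the same route as the paper: Corollary~\ref{cor:cons-MS} is proved by invoking Corollary~\ref{cor:MSLE=naive} to identify the MSLE with the naive estimator on $[\ell,M]$ and then bounding the naive estimator via the rates in Lemma~\ref{le:1}, which is exactly the content of Lemma~\ref{lem:cons-naive} cited in the paper's one-line proof. Your algebraic decomposition into two fractions is a cosmetic variant of the paper's single-fraction bound $\sup|v_n\Phi-hw_n|/(w_n(M;\hat\beta_n)\Phi(M;\beta_0))$, and your remark about the bandwidth hypothesis is a fair observation: the statement of the corollary does not restate the condition $1/b=O(n^\alpha)$, $\alpha\in(0,1/4)$, that Corollary~\ref{cor:MSLE=naive} needs, so that condition should be understood as carried over implicitly.
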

\begin{proof}
The result follows immediately from Corollary~\ref{cor:MSLE=naive} and Lemma~\ref{lem:cons-naive}.
\end{proof}
To obtain the asymptotic distribution of $\hat{\lambda}^{MS}_n(x)$, we first obtain the asymptotic distribution of $\hat{\lambda}_n^{\mathrm{naive}}(x)$.
To this end we establish the joined asymptotic distribution of the vector~$(w_n(x;\hat\beta_n),v_n(x))$,
see Lemma~\ref{lemma:distr}.
Then an application of the delta-method yields the limit distribution of~$\hat{\lambda}_n^{\mathrm{naive}}$
as well as that of~$\hat{\lambda}^{MS}_n$, due to Corollary~\ref{cor:MSLE=naive}.

\begin{theo}
\label{theo:distrMS}
Suppose that (A1)-(A2) hold and fix $x\in(0,\tau_H)$.
Let $H^{uc}(t)$ and $\Phi(t;\beta_0)$ be defined in~\eqref{eq:def Huc} and~\eqref{eq:def Phi},
and let $h(t)=\mathrm{d}H^{uc}(t)/\mathrm{d}t$, satisfying~\eqref{eqn:lambda0}.
Suppose that $h$ and $t\mapsto\Phi(t;\beta_0)$ are $m\geq 2$ times continuously differentiable
and let $k$ be $m$-orthogonal satisfying~\eqref{def:kernel}.
Let~$\hat{\lambda}^{MS}_n(x)$ be defined in~\eqref{def:lambdaMS} and assume that $n^{1/(2m+1)}b\to c>0$.
Then, for each $x\in(0,\tau_H)$, the following holds
\[
n^{m/(2m+1)}
\left(\hat{\lambda}^{MS}_n(x)-\lambda_0(x)\right)\xrightarrow{d} N(\mu,\sigma^2),
\]
where
\begin{equation}
\label{def:mu sigma}
\begin{split}
\mu
&=
\frac{(-c)^m}{m!}
\frac{h^{(m)}(x)-\lambda_0(x)\Phi^{(m)}(x;\beta_0)}{\Phi(x;\beta_0)}
\int_{-1}^1 k(y)y^m\,\mathrm{d}y;\\
\sigma^2
&=
\frac{\lambda_0(x)}{c\Phi(x;\beta_0)}\int_{-1}^1 k^2(y)\,\mathrm{d}y.
\end{split}
\end{equation}
This also holds if we replace $\hat{\lambda}^{MS}_n(x)$ with $\hat{\lambda}_n^{\mathrm{naive}}(x)$, as defined in~\eqref{def:naive est MSLE}.
\end{theo}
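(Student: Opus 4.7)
The plan is to use Corollary \ref{cor:MSLE=naive} to replace $\hat\lambda_n^{MS}(x)$ by the naive estimator $\hat\lambda_n^{\mathrm{naive}}(x)=v_n(x)/w_n(x;\hat\beta_n)$ at the interior point $x\in(0,\tau_H)$, and then to establish a one-dimensional CLT for an appropriately centered numerator. Using the identity $h(x)=\lambda_0(x)\Phi(x;\beta_0)$ from \eqref{eqn:lambda0}, I first write
\begin{equation*}
\hat\lambda_n^{\mathrm{naive}}(x)-\lambda_0(x)
=
\frac{[v_n(x)-h(x)]-\lambda_0(x)\bigl[w_n(x;\hat\beta_n)-\Phi(x;\beta_0)\bigr]}{w_n(x;\hat\beta_n)}.
\end{equation*}
Lemma~\ref{le:1} yields $w_n(x;\hat\beta_n)\to\Phi(x;\beta_0)>0$ in probability, so by Slutsky the task reduces to the weak limit of $n^{m/(2m+1)}$ times the numerator.

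The dependence on $\hat\beta_n$ is removed by a mean-value expansion in $\beta$: assumption~(A2) together with the Tsiatis rate $\hat\beta_n-\beta_0=O_p(n^{-1/2})$ gives $w_n(x;\hat\beta_n)-w_n(x;\beta_0)=O_p(n^{-1/2})$, which is $o_p(n^{-m/(2m+1)})$ because $m/(2m+1)<1/2$. After this replacement, the numerator is an empirical average $(1/n)\sum_{i=1}^n(\xi_{n,i}-\E\xi_{n,i})+\mathrm{bias}_n$, where
\begin{equation*}
\xi_{n,i}=\Delta_i k_b(x-T_i)-\lambda_0(x)\,\mathrm{e}^{\beta_0'Z_i}\!\int_x^\infty k_b(u-T_i)\,\mathrm{d}u
\end{equation*}
forms a triangular array of row-wise i.i.d.\ summands.

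For the deterministic bias, the change of variable $y=(x-t)/b$ combined with the $m$-orthogonality of $k$ and a Taylor expansion of order $m$ of $h$ gives
\begin{equation*}
\E v_n(x)-h(x)=\frac{(-b)^m}{m!}\,h^{(m)}(x)\!\int_{-1}^1 k(y)y^m\,\mathrm{d}y+o(b^m).
\end{equation*}
The analogous expansion for $\E w_n(x;\beta_0)-\Phi(x;\beta_0)$, obtained by Fubini (rewriting $\E w_n(\cdot;\beta_0)$ as the integral from $x$ to $\infty$ of a kernel-smoothed version of the sub-density $-\Phi'(\cdot;\beta_0)$) followed by integration by parts, produces the leading coefficient $\Phi^{(m)}(x;\beta_0)$. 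Under $n^{1/(2m+1)}b\to c$, $n^{m/(2m+1)}$ times the total bias therefore converges to $\mu\,\Phi(x;\beta_0)$. For the stochastic part, a direct calculation shows $\mathrm{Var}(\xi_{n,1})=b^{-1}h(x)\!\int_{-1}^1 k^2(y)\,\mathrm{d}y+o(b^{-1})$ and $\E|\xi_{n,1}|^3=O(b^{-2})$, so the Lyapunov ratio is $O((nb)^{-1/2})\to 0$. The triangular-array CLT then gives
\begin{equation*}
\sqrt{nb}\cdot\frac{1}{n}\sum_{i=1}^n(\xi_{n,i}-\E\xi_{n,i})\xrightarrow{d}N\!\left(0,\;h(x)\!\int_{-1}^1 k^2(y)\,\mathrm{d}y\right),
\end{equation*}
and substituting $\sqrt{nb}=\sqrt{c}\,n^{m/(2m+1)}$ yields limiting variance $\sigma^2\Phi(x;\beta_0)^2$. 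Dividing the limit of the numerator by $\Phi(x;\beta_0)$ produces the claimed $N(\mu,\sigma^2)$ distribution, simultaneously for $\hat\lambda_n^{\mathrm{naive}}(x)$ and, via Step~1, for $\hat\lambda_n^{MS}(x)$.

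The main technical obstacle lies in the bias expansion for $\E w_n(x;\beta_0)$: because $w_n$ is an outer integral from $x$ to infinity of a kernel-smoothed quantity, the order-$m$ Taylor argument has to be set up so that the remainder collapses to a single boundary contribution at $t=x$, producing $\Phi^{(m)}(x;\beta_0)$ with the correct sign, while the contribution at infinity has to be verified to vanish using assumption~(A1). Everything else is routine bias/variance bookkeeping and a standard application of the Lindeberg--Lyapunov CLT to bounded triangular-array summands.
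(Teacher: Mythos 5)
Your overall strategy is correct and mirrors the paper's in its first move: invoke Corollary~\ref{cor:MSLE=naive} to transfer the problem to the naive estimator, replace $\hat\beta_n$ by $\beta_0$ using the $O_p(n^{-1/2})$ rate of the partial MLE and assumption (A2), and then finish with a triangular-array CLT. Where you depart is at the CLT step. The paper establishes the \emph{joint} weak limit of the vector $\big(w_n(x;\hat\beta_n),\,v_n(x)\big)$ via the bivariate Lindeberg--Feller theorem (Lemma~\ref{lemma:distr}) and then invokes the delta method with $\phi(w,v)=v/w$. You instead perform the linearization by hand, writing
\[
\hat\lambda_n^{\mathrm{naive}}(x)-\lambda_0(x)
=\frac{[v_n(x)-h(x)]-\lambda_0(x)\bigl[w_n(x;\hat\beta_n)-\Phi(x;\beta_0)\bigr]}{w_n(x;\hat\beta_n)},
\]
which is an \emph{exact} identity thanks to $h(x)=\lambda_0(x)\Phi(x;\beta_0)$, and then apply a one-dimensional Lyapunov CLT to the single triangular array $\xi_{n,i}$. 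This is slightly more elementary (you avoid the multivariate CLT and the delta method) and just as rigorous: your variance bookkeeping matches the paper's (the dominant contribution comes from the $\Delta_i k_b(x-T_i)$ term, the other two variance/covariance pieces being $O(1)=o(b^{-1})$), and the Lyapunov ratio $O((nb)^{-1/2})$ replaces the paper's Lindeberg verification. The price is that your route is one-off, while the paper's Lemma~\ref{lemma:distr} packages the joint distribution of $(w_n,v_n)$ as a reusable ingredient (it is in fact reused in the proof of Theorem~\ref{theo:MSLE asymptotic equivalence}).

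One remark on the bias of $w_n$: what you flag as the ``main technical obstacle'' is simpler than you suggest. By Fubini one has directly $\mathbb{E}[w_n(x;\beta_0)]=\int_{-1}^1 k(y)\,\Phi(x-by;\beta_0)\,\mathrm{d}y$ (this is the content of~\eqref{eqn:w} in the paper), so the $m$-th order Taylor expansion applies to $\Phi(\cdot;\beta_0)$ exactly as to $h$; there is no genuine boundary issue to control. Your route through the sub-density $-\Phi'(\cdot;\beta_0)$ followed by an integration of $-\Phi'(u-bs;\beta_0)$ from $x$ to $\infty$ does produce $\Phi(x-bs;\beta_0)$ (using $\Phi(\infty;\beta_0)=0$, guaranteed by the integrability in (A2)), so it is correct, just longer than necessary.
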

The proof of Theorem~\ref{theo:distrMS} can be found in Appendix~\ref{subsec:proofs MSLE}.
Theorem~\ref{theo:distrMS} is comparable to Theorem~3.5 in~\cite{LopuhaaMustaSI2016},
where the limiting normal distribution of the smoothed maximum likelihood estimator $\hat{\lambda}^{SM}_n(x)$
and the smoothed Grenander-type estimator $\tilde\lambda_n^{SG}(x)$ is established, i.e.,
\begin{equation}
\label{eq:asymp norm SM SG}
\begin{split}
n^{m/(2m+1)}
\left(\hat{\lambda}^{SM}_n(x)-\lambda_0(x)\right)
&\xrightarrow{d}
N(\widetilde{\mu},\sigma^2),\\
n^{m/(2m+1)}
\left(
\tilde\lambda_n^{SG}(x)-\hat{\lambda}^{SM}_n(x)
\right)
&\xrightarrow{\p} 0,
\end{split}
\end{equation}
where
\begin{equation}
\label{def:mutilde}
\widetilde{\mu}
=\frac{(-c)^m}{m!}
\lambda_0^{(m)}(x)
\int_{-1}^1 k(y)y^m\,\mathrm{d}y.
\end{equation}%
The limiting variance is the same, but the asymptotic mean is shifted.
A natural question is whether $\hat{\lambda}^{MS}_n(x)$ is asymptotically equivalent to these estimators,
if we correct for the difference in the asymptotic mean.
The next theorem shows that this is indeed the case.
The proof can be found in Appendix~\ref{subsec:proofs MSLE}.
In order to use results from~\cite{LopuhaaMustaSI2016}, we have to strengthen condition (A2) slightly.
\begin{theo}
\label{theo:MSLE asymptotic equivalence}
Suppose that (A1) holds and
\begin{itemize}
\item[(A2')]
there exists $\epsilon>0$ such that
\[
\sup_{|\beta-\beta_0|\leq\epsilon}\E
\left[
|Z|^2
\left(
\mathrm{e}^{2\beta'Z}+\mathrm{e}^{4\beta'Z}
\right)
\right]<\infty.
\]
\end{itemize}%
Fix $x\in(0,\tau_H)$.
Suppose that $\lambda_0$ and $t\mapsto\Phi(t;\beta_0)$ are $m\geq 2$ times continuously differentiable,
with $\lambda'_0$ uniformly bounded from below by a strictly positive constant,
and let $k$ be $m$-orthogonal satisfying~\eqref{def:kernel}.
Let~$\hat{\lambda}^{MS}_n(x)$ be the maximum smoothed likelihood estimator and let
$\hat{\lambda}^{SM}(x)$ be the smoothed maximum likelihood estimator, defined in~\cite{LopuhaaMustaSI2016}.
Let~$\mu$ and~$\widetilde{\mu}$ be defined in~\eqref{def:mu sigma} and~\eqref{def:mutilde}, respectively.
Then, for each $x\in(0,\tau_H)$, the following holds
\[
n^{m/(2m+1)}
\left(
\hat\lambda_n^{MS}(x)
-
\hat\lambda_n^{SM}(x)
\right)
-
\left(
\mu-\widetilde{\mu}
\right)
\to
0
\]
in probability, and similarly if we replace~$\hat\lambda_n^{SM}(x)$
by the smoothed Grenander-type estimator~$\tilde\lambda_n^{SG}(x)$, defined in~\cite{LopuhaaMustaSI2016}.
\end{theo}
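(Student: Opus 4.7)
The plan is to reduce the theorem to a direct comparison between the naive estimator $\hat{\lambda}_n^{\mathrm{naive}}(x) = v_n(x)/w_n(x;\hat\beta_n)$ that underlies the MSLE and a correspondingly simple ``naive'' counterpart of the smoothed MLE, so that the difference of the two naive estimators can be evaluated by elementary kernel computations. By Corollary~\ref{cor:MSLE=naive}, with probability tending to one $\hat{\lambda}_n^{MS}(x) = \hat{\lambda}_n^{\mathrm{naive}}(x)$ on a neighborhood of $x$, and by the second statement of \eqref{eq:asymp norm SM SG} we may equivalently work with the smoothed Grenander estimator $\tilde\lambda_n^{SG}(x)$ in place of $\hat\lambda_n^{SM}(x)$. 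Since on compact sub-intervals of $(0,\tau_H)$ the smoothed Grenander estimator is, by the results of~\cite{LopuhaaMustaSI2016}, asymptotically equivalent, up to terms of smaller order than $n^{-m/(2m+1)}$, to the smoothed Breslow estimator $\tilde\lambda_n^{SB}(x) = \int k_b(x-u)\,\mathrm{d}\Lambda_n(u)$, the task reduces to showing
\[
n^{m/(2m+1)}\bigl[\hat{\lambda}_n^{\mathrm{naive}}(x) - \tilde\lambda_n^{SB}(x)\bigr] \xrightarrow{\p} \mu - \widetilde{\mu}.
\]

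Next I would expand
\[
\hat{\lambda}_n^{\mathrm{naive}}(x) - \tilde\lambda_n^{SB}(x) = \int k_b(x-t)\,\delta\,\frac{\Phi_n(t;\hat\beta_n) - w_n(x;\hat\beta_n)}{w_n(x;\hat\beta_n)\,\Phi_n(t;\hat\beta_n)}\,\mathrm{d}\p_n(t,\delta,z),
\]
and use~(A2'), \eqref{eqn:Phi} and the consistency of $w_n(x;\hat\beta_n)$ for $\Phi(x;\beta_0)$ to replace the denominator by $\Phi(x;\beta_0)\Phi(t;\beta_0)$ at an $o_P(n^{-m/(2m+1)})$ cost. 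Inside the numerator, pointwise rates such as $\Phi_n(t;\hat\beta_n) - \Phi(t;\beta_0) = O_P(n^{-1/2})$ and $w_n(x;\hat\beta_n) - \Phi(x;\beta_0) = O_P(n^{-1/2}) + O(b^m)$ imply, after integration against $k_b(x-t)\,\delta\,\mathrm{d}\p_n$, a contribution that is negligible at the relevant scale. Thus one is left with the deterministic-kernel integral
\[
A_n(x) = \frac{1}{\Phi(x;\beta_0)^2}\int k_b(x-t)\,\delta\,[\Phi(t;\beta_0) - \Phi(x;\beta_0)]\,\mathrm{d}\p_n(t,\delta,z).
\]

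I would then split $A_n(x)$ into its expectation and a centered stochastic part. The expectation, computed via the substitution $t = x + by$ and an $m$-th order Taylor expansion of both $\Phi(\cdot;\beta_0)$ and $h$ around $x$, combined with the $m$-orthogonality of $k$ and the identity $h = \lambda_0\,\Phi(\cdot;\beta_0)$, produces a $b^m$ coefficient that, after applying the Leibniz rule to $h^{(m)}$, matches exactly $\mu - \widetilde{\mu}$ as given in \eqref{def:mu sigma} and \eqref{def:mutilde}. For the centered part, the extra factor $\Phi(t;\beta_0) - \Phi(x;\beta_0) = O(b)$ on the effective support $|t-x| \leq b$ yields a variance of order $b^2\cdot(nb)^{-1} = b/n$, so that the centered contribution is $O_P(\sqrt{b/n}) = o_P(n^{-m/(2m+1)})$ for every $m \geq 2$.

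The main obstacle is the bookkeeping in the reduction step: the uniform bound $O_P(b^{-1}n^{-1/2})$ coming from Lemma~\ref{le:1} is insufficient, since for $m\geq 2$ it is larger than the target scale $n^{-m/(2m+1)}$. One must instead exploit pointwise bounds---noting that $w_n(x;\hat\beta_n)$ at a fixed $x$ is essentially an empirical mean of bounded functionals, hence $O_P(n^{-1/2})$ without the $b^{-1}$ factor---and absorb the apparent loss by the extra factor of $b$ carried by $\Phi(t;\beta_0) - \Phi(x;\beta_0)$ inside the integrand. Assumption~(A2') is precisely what provides the uniform moment control required to invoke the joint asymptotic distribution results of~\cite{LopuhaaMustaSI2016} for the pair $(\hat\beta_n - \beta_0, \p_n - \p)$ at the sharp pointwise level needed here. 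The conclusion for $\tilde\lambda_n^{SG}$ in place of $\hat\lambda_n^{SM}$ is then immediate from the second convergence in \eqref{eq:asymp norm SM SG}.
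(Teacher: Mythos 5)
Your route is genuinely different from the paper's. The paper directly compares the delta-method linearization of $\hat\lambda_n^{\mathrm{naive}}(x)$ (obtained from Lemma~\ref{lemma:distr}) with the linearization of $\tilde\lambda_n^{SM}(x)$ given in the proof of Theorem~3.5 of~\cite{LopuhaaMustaSI2016}; the whole argument reduces to showing that the two centered empirical-process terms agree with $\int \delta k_b(x-u)/\Phi(u;\beta_0)\,\mathrm{d}(\p_n-\p)$ up to $o_p(n^{-m/(2m+1)})$, with the biases $\mu$ and $\widetilde\mu$ falling out of the two expansions. Your proposal instead compares $\hat\lambda_n^{\mathrm{naive}}$ directly with the smoothed-Breslow naive estimator and tries to extract $\mu-\widetilde\mu$ from the expectation of a single kernel integral. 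That idea can be made to work, but the execution has a gap.

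The problem is your claim that the $O(b^m)$ bias of $w_n(x;\hat\beta_n)$ contributes negligibly. Write $w_s(x) = \int_{-1}^1 k(y)\Phi(x-by;\beta_0)\,\mathrm{d}y$. Then $w_n(x;\hat\beta_n) - \Phi(x;\beta_0) = [w_n(x;\hat\beta_n) - w_s(x)] + [w_s(x)-\Phi(x;\beta_0)]$, and while the first piece is indeed $O_P(n^{-1/2})$, the second equals $\frac{(-b)^m}{m!}\Phi^{(m)}(x;\beta_0)\int k(y)y^m\,\mathrm{d}y + o(b^m)$. Since $n^{m/(2m+1)}b^m\to c^m>0$, this is \emph{exactly} at the target scale, not below it; when integrated against $k_b(x-t)\delta\,\mathrm{d}\p_n \approx h(x)$, it produces a nonvanishing contribution proportional to $\lambda_0(x)\Phi^{(m)}(x;\beta_0)/\Phi(x;\beta_0)$. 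Concretely, if the numerator in your $A_n(x)$ is $\Phi(t;\beta_0)-\Phi(x;\beta_0)$ rather than $\Phi(t;\beta_0)-w_s(x)$, then (using the $\Phi(x)\Phi(t)$ denominator you announce in prose and writing $h=\lambda_0\Phi$) the Taylor/Leibniz expansion yields $n^{m/(2m+1)}\E[A_n(x)]\to \frac{(-c)^m}{m!\Phi(x)}\sum_{j=1}^{m}\binom{m}{j}\Phi^{(j)}(x)\lambda_0^{(m-j)}(x)\int k(y)y^m\,\mathrm{d}y$, whereas $\mu-\widetilde\mu = \frac{(-c)^m}{m!\Phi(x)}\sum_{j=1}^{m-1}\binom{m}{j}\Phi^{(j)}(x)\lambda_0^{(m-j)}(x)\int k(y)y^m\,\mathrm{d}y$ (after reindexing by $j\mapsto m-j$). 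You are missing the cancellation of the $j=m$ term, which is supplied precisely by the $w_s$-bias you dropped. A secondary issue is that your displayed $A_n(x)$ has $\Phi(x;\beta_0)^2$ in the denominator, contradicting the prose; the difference matters, because the $O(b)$ term $1/\Phi(t;\beta_0)-1/\Phi(x;\beta_0)$ pairs with the $O(b)$ factor $\Phi(t;\beta_0)-\Phi(x;\beta_0)$ to give yet another $O(b^m)$ contribution when $m=2$. So as written the Taylor bookkeeping does not land on $\mu-\widetilde\mu$, and the proof is incomplete until both of these $O(b^m)$ terms are restored and tracked.
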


\section{Isotonized smoothed Breslow estimator}
\label{sec:GS}

The second method that we consider is an isotonized version of the smoothed Breslow estimator, defined by
\begin{equation}
\label{def:smoothed Breslow}
\Lambda_n^s(x)
=
\int k_b(x-u)\Lambda_n(u)\,\mathrm{d}u.
\end{equation}
In order to avoid problems at the right end of the support, we fix $0<\tau^*<\tau_H$ and consider estimation only on $[0,\tau^*]$.
A similar approach was considered in~\cite{GJ13}, when estimating a monotone hazard of uncensored observations.
The main reason in our setup is that in order to exploit the representation in~\eqref{eqn:lambda0},
we must have $x<\tau_H$, because $\Phi(x;\beta_0)=0$ otherwise.
The isotonized smoothed Breslow estimator (ISBE) of a nondecreasing baseline hazard is a Grenander-type estimator,
as being defined as the left derivative of the greatest convex minorant of $\Lambda_n^s$ on $[0,\tau^*]$.
We denote this estimator by $\tilde{\lambda}_n^{GS}$.

Note that this type of estimator was defined also in~\cite{Nane} without the restriction on $[0,\tau^*]$.
Strong pointwise consistency was proved  and uniform consistency on intervals $[\epsilon, \tau_H-\epsilon]\subset[0,\tau_H]$ follows immediately from the monotonicity and the continuity of $\lambda_0$.
These results also illustrate that there are consistency problems at the end point of the support.
Since in practice we do not even know $\tau_H$, the choice of $\tau^*$ might be an issue.
Since one wants $\tau^*$ to be close to $\tau_H$, one reasonable choice would be to take as $\tau^*$ the $95\%$-empirical quantile of the follow-up times, because this converges to the theoretical $95\%$-quantile,
which is strictly smaller than $\tau_H$.
Note that we cannot choose~$T_{(n)}$, because it converges to $\tau_H$,
i.e., for large $n$, it will be greater than any fixed $\tau^*<\tau_H$.

Figure~\ref{fig:GS} shows the smoothed Breslow estimator and the ISBE for the same sample as in Figure~\ref{fig:MSLE}.
To avoid problems at the boundary we use the boundary corrected version of the kernel function and consider the data up to the $95\%$-empirical quantile of the follow-up times.
The bandwidth is $b_n=n^{-1/5}$.
\begin{figure}[t]
\includegraphics[width=\textwidth]{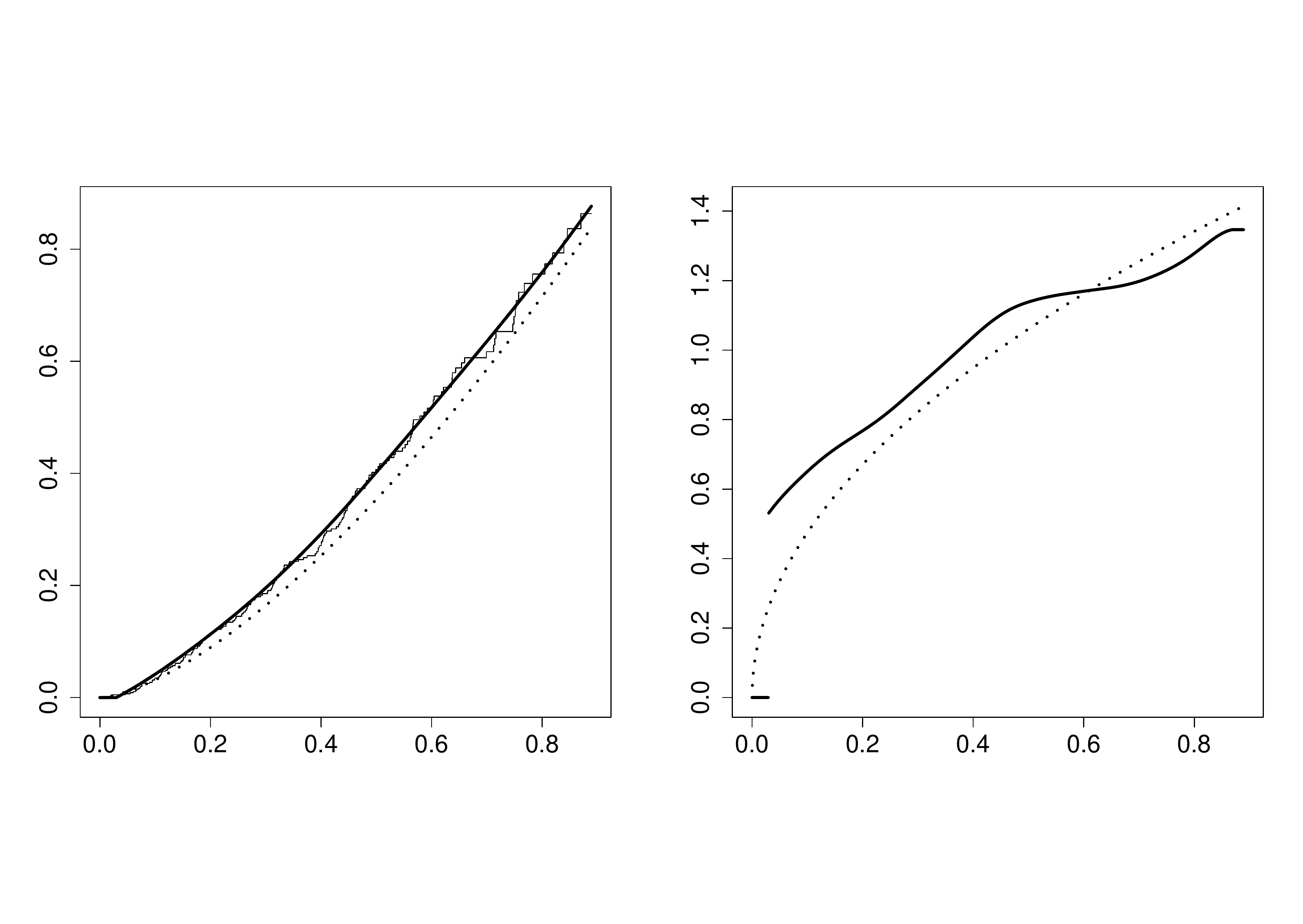}
\caption{Left panel:
The smoothed version (solid) of the Breslow estimator (solid-step function) for the cumulative baseline hazard (dotted)
and the greatest convex minorant (dashed).
Right panel: The Grenander-type smoothed estimator (solid) of the baseline hazard (dotted).}
\label{fig:GS}
\end{figure}
Similar to the proof of Lemma~\ref{lem:char MSLE}, it follows from Lemma 1 in~\cite{GJ10},
that $\tilde{\lambda}_n^{GS}$ is continuous and is the unique maximizer of
\[
\psi(\lambda)=\frac{1}{2}
\int_0^{\tau^*}
\left(
\lambda(x)-\lambda_n^s(x)
\right)^2\,\mathrm{d}x
\]
over all nondecreasing functions $\lambda$, where
\begin{equation}
\label{def:vn GS}
\lambda_n^s(x)=\frac{\mathrm{d}}{\mathrm{d}x}\Lambda_n^s(x)=\int k'_b(x-u)\Lambda_n(u)\,\mathrm{d}u.
\end{equation}
This suggests
\begin{equation}
\label{def:naive est GS}
\tilde{\lambda}_n^{\mathrm{naive}}(x)=\lambda_n^s(x)
\end{equation}
as another naive estimator for $\lambda_0(x)$.
This naive estimator is the derivative of the smoothed Breslow.
Again, it is smooth but not necessarily monotone and its least squares projection is the ISBE.
Note that by means of integration by parts, we can also write
\[
\lambda_n^s(x)
=
\int k_b(x-u)\,\mathrm{d}\Lambda_n(u).
\]
Hence, the naive estimator from~\eqref{def:vn GS} is equal to the ordinary Rosenblatt-Parzen kernel estimator for the baseline hazard.
Asymptotic normality for this estimator under random censoring has been proven
by~\cite{ramlau-hansen1983} and~\cite{tanner-wong1983}.
A similar result in a general counting processes setup, that includes the Cox model, is stated in~\cite{wells1994},
but only the idea of the proof is provided.
We will establish asymptotic normality for the naive estimator from~\eqref{def:vn GS} in our current setup of the Cox model,
see the proof of Theorem~\ref{theo:as.distrGS}.

Then, similar to the approach used in Section~\ref{sec:MSLE}, the derivation of the asymptotic distribution
of~$\tilde{\lambda}^{GS}_n$ is based on showing that it is equal to the naive estimator in~\eqref{def:naive est GS}
on large intervals with probability converging to one.
The ISBE is a special case of Lemma~\ref{lem:coincide}, with $X_n(t)=t$, $Y_n(t)=\Lambda_n^s(t)$,
and $\hat\tau=\tau^*$.
As before, condition~\eqref{cond a} of Lemma~\ref{lem:coincide} is trivial
and condition~\eqref{cond b} is fairly straightforward, see~\eqref{eq:Lemma53 GS} for details.
Condition~\eqref{cond c} of Lemma~\ref{lem:coincide} is established in Lemma~\ref{lem:monotone2}
and condition~\eqref{cond d} is also straightforward, see~\eqref{eq:Lemma54 GS} for details.
Hence, Lemma~\ref{lem:coincide} applies to the ISBE and the naive estimator from~\eqref{def:naive est GS},
which leads to the following corollary.
\begin{cor}
\label{cor:ISBE=naive}
Suppose that (A1)-(A2) hold.
Let $\lambda_0$ be continuously differentiable, with $\lambda'_0$ uniformly bounded from below by a strictly positive constant,
and let $k$ satisfy~\eqref{def:kernel}.
If $b\to0$ and $1/b=O(n^{\alpha})$, for some $\alpha\in(0,1/4)$, then for each $0<\ell<M<\tau^*$,  it holds
\[
\p
\left(
\tilde{\lambda}_n^{\mathrm{naive}}(x)=\tilde{\lambda}_n^{GS}(x)\text{ for all } x\in[\ell,M]
\right)\to 1.
\]
Consequently, for all $x\in(0,\tau^*)$, the asymptotic distributions of $\tilde{\lambda}_n^{\mathrm{naive}}(x)$
and $\tilde{\lambda}_n^{GS}(x)$ are the same.
\end{cor}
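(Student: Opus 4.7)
The plan is to invoke Lemma~\ref{lem:coincide} with the identifications $X_n(t)=t$, $Y_n(t)=\Lambda_n^s(t)$, and $\hat\tau=\tau^*$, so that the isotonic estimator $\lambda_n^{IS}$ becomes the ISBE $\tilde\lambda_n^{GS}$ (the left derivative of the greatest convex minorant of $\Lambda_n^s$) and the ``ratio'' estimator $\hat\lambda_n(t)=\mathrm{d}Y_n(t)/\mathrm{d}X_n(t)$ becomes $\lambda_n^s(t)=\tilde\lambda_n^{\mathrm{naive}}(t)$. Once all four hypotheses of Lemma~\ref{lem:coincide} are verified on $[0,\tau^*]$, the first assertion follows directly. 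The claim that $\tilde\lambda_n^{\mathrm{naive}}(x)$ and $\tilde\lambda_n^{GS}(x)$ have the same asymptotic distribution at each interior point $x\in(0,\tau^*)$ is then automatic, since for $\ell<x<M$ we have $\p(\tilde\lambda_n^{\mathrm{naive}}(x)=\tilde\lambda_n^{GS}(x))\to 1$.

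Conditions~\eqref{cond a} and (the first half of)~\eqref{cond d} are immediate: $X_n(t)=t$ is strictly increasing with constant derivative $x_0\equiv 1$, and taking $X_0(t)=t$ gives $\sup_{t\in[0,\tau^*]}|X_n(t)-X_0(t)|=0$. For~\eqref{cond b} I would use the representation $\lambda_n^s(x)=\int k_b(x-u)\,\mathrm{d}\Lambda_n(u)$ (integration by parts, as noted in the text). Splitting $\mathrm{d}\Lambda_n=\mathrm{d}(\Lambda_n-\Lambda_0)+\lambda_0(u)\,\mathrm{d}u$ and using the uniform bound~\eqref{eqn:Breslow} on $[0,M]\subset[0,\tau_H)$ together with the boundedness of $k'$, one obtains $\lambda_n^s(x)\to \lambda_0(x)$ in probability for every interior $x$, since the deterministic part converges to $\lambda_0(x)$ by standard kernel arguments (continuity of $\lambda_0$). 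Condition~\eqref{cond c} is exactly the statement of Lemma~\ref{lem:monotone2}, which is cited from the text.

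It remains to verify the second half of~\eqref{cond d}, namely $\sup_{t\in[0,\tau^*]}|\Lambda_n^s(t)-\Lambda_0(t)|\xrightarrow{\p}0$ with $Y_0(t)=\Lambda_0(t)=\int_0^t\lambda_0(u)\,\mathrm{d}u=\int_0^t\lambda_0(u)\,\mathrm{d}X_0(u)$, which matches the required relation between $X_0$ and $Y_0$. I would write
\[
\Lambda_n^s(t)-\Lambda_0(t)
=\int k_b(t-u)\bigl(\Lambda_n(u)-\Lambda_0(u)\bigr)\,\mathrm{d}u
+\int k_b(t-u)\Lambda_0(u)\,\mathrm{d}u-\Lambda_0(t).
\]
The first term is bounded by $\sup_{[0,M]}|\Lambda_n-\Lambda_0|\cdot\int|k_b|=O_p(n^{-1/2})$ on $[0,\tau^*]$ with $\tau^*+b<M<\tau_H$ (using~\eqref{eqn:Breslow}), and the second term converges to $0$ uniformly on $[0,\tau^*]$ by continuity of $\Lambda_0$ and $b\to 0$.

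The main obstacle, and the step that carries the real work, is condition~\eqref{cond c}, i.e.\ the eventual monotonicity of $\tilde\lambda_n^{\mathrm{naive}}=\lambda_n^s$ on arbitrary compact subintervals of $(0,\tau^*)$. This requires uniform control on the derivative of $\lambda_n^s$ and exploits the assumption that $\lambda_0'$ is bounded below by a strictly positive constant together with the rate restriction $1/b=O(n^\alpha)$ with $\alpha<1/4$; this is precisely the content of Lemma~\ref{lem:monotone2}, which I would invoke as a black box. Everything else is either a direct quotation of a previously stated uniform bound or a routine convolution estimate.
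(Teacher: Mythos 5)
Your proposal is correct and follows essentially the same route as the paper's own proof: same invocation of Lemma~\ref{lem:coincide} with $X_n(t)=t$, $Y_n(t)=\Lambda_n^s(t)$, $\hat\tau=\tau^*$; same use of integration by parts plus~\eqref{eqn:Breslow} for condition~\eqref{cond b}; citation of Lemma~\ref{lem:monotone2} for~\eqref{cond c}; and the same decomposition for~\eqref{cond d}. The only difference is that the paper spells out the boundary case $t<b$ in verifying $\sup_{t\in[0,\tau^*]}|\Lambda_n^s(t)-\Lambda_0(t)|\to 0$ (using that $\Lambda_0(0)=0$ and $\Lambda_0$ is Lipschitz near $0$), which you compress into ``continuity of $\Lambda_0$ and $b\to0$''; that gloss is harmless and the argument goes through.
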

The proof of Corollary~\ref{cor:ISBE=naive} can be found in Appendix~\ref{subsec:proofs GS}.

\begin{re}
Note that in case the kernel function is strictly positive on $(-1,1)$ and the baseline hazard is strictly increasing, one can easily check  that
\[
x\mapsto\int_{-1}^{x/b} k(y) \lambda_0(x-by)\,\mathrm{d}y
\]
is a continuously differentiable, strictly increasing function on $[0,M]$ and as a result we obtain that
\begin{equation}
\begin{split}
\frac{\mathrm{d}}{\mathrm{d}x}\tilde{\lambda}_n^{\mathrm{naive}}(x)
&=
\frac{\mathrm{d}}{\mathrm{d}x}\left(\int_{-1}^{x/b} k(y) \lambda_0(x-by)\,\mathrm{d}y  \right)
+
\frac{1}{b^2}\int k'\left(\frac{x-u}{b}\right)\,\mathrm{d}\left(\Lambda_n-\Lambda_0\right)(u)\\
&\geq C+o_P(1).
\end{split}
\end{equation}
This implies that $\tilde{\lambda}_n^{\mathrm{naive}}$ is increasing on $[0,M]$.
\end{re}
Finally, consistency and the asymptotic distribution of $\tilde{\lambda}_n^{GS}(x)$ is provided by the next theorem.
Its proof can be found in Appendix~\ref{subsec:proofs GS}.
\begin{theo}
\label{theo:as.distrGS}
Suppose that (A1)-(A2) hold and fix $x\in(0,\tau_H)$ and $\tau^*\in(x,\tau_H)$.
Assume that $\lambda_0$ is $m\geq2$ times continuously differentiable, with $\lambda'_0$ uniformly bounded from below by a strictly positive constant.
Assume that $t\mapsto \Phi(t;\beta_0)$ is continuous in a neighborhood of $x$
and let~$k$ be $m$-orthogonal satisfying~\eqref{def:kernel}.
Let~$\tilde{\lambda}_n^{GS}$ be the left derivative of the greatest convex minorant on $[0,\tau^*]$
of $\Lambda_n^s$ defined in~\eqref{def:smoothed Breslow}
and suppose that $n^{1/(2m+1)}b\to c>0$.
Then, for all $0<\ell<M<\tau^*$,
\[
\sup_{x\in[\ell,M]}
\left|
\tilde{\lambda}_n^{GS}(x)-\lambda_0(x)
\right|
=
O(b^m)+O_p(b^{-1}n^{-1/2}),
\]
in probability, and it holds that
\[
n^{m/(2m+1)}
\left(
\tilde{\lambda}_n^{GS}(x)-\lambda_0(x)
\right)
\xrightarrow{d}N(\mu,\sigma^2),
\]
where
\[
\mu
=
\frac{(-c)^m}{m!}
\lambda_0^{(m)}(x)
\int_{-1}^1 k(y)y^m\,\mathrm{d}y
\quad\text{ and }\quad
\sigma^2
=
\frac{\lambda_0(x)}{c\Phi(x;\beta_0)}
\int_{-1}^1 k(y)^2\,\mathrm{d}y.
\]
\end{theo}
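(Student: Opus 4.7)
The plan is to invoke Corollary~\ref{cor:ISBE=naive} to reduce both claims of the theorem to the same statements for the naive estimator $\tilde\lambda_n^{\mathrm{naive}}(x)=\lambda_n^s(x)=\int k_b(x-u)\,\mathrm{d}\Lambda_n(u)$: on an event of probability tending to one, $\tilde\lambda_n^{GS}$ agrees with $\lambda_n^s$ on any compact $[\ell,M]\subset(0,\tau^*)$, and on the complement their difference is asymptotically negligible. With this reduction in hand, the natural decomposition is
\[
\lambda_n^s(x)-\lambda_0(x)
=
\Bigl[\int k_b(x-u)\lambda_0(u)\,\mathrm{d}u-\lambda_0(x)\Bigr]
+\int k_b(x-u)\,\mathrm{d}(\Lambda_n-\Lambda_0)(u),
\]
with a deterministic bias part and a stochastic part driven by the Breslow fluctuations.

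For the uniform consistency bound, an $m$-th order Taylor expansion of $\lambda_0$ around $x$, combined with the $m$-orthogonality of $k$, gives that the bias term equals $\frac{(-b)^m}{m!}\lambda_0^{(m)}(x)\int_{-1}^1 k(y)y^m\,\mathrm{d}y+o(b^m)$ uniformly in $x\in[\ell,M]$. The stochastic part is handled by integration by parts, which is legal because for $b$ small enough and $x\in[\ell,M]$ the kernel $k_b(x-\cdot)$ is compactly supported inside $[0,\tau^*]$; this converts it into $-\int k_b'(x-u)(\Lambda_n(u)-\Lambda_0(u))\,\mathrm{d}u$, and combining the bound $\int|k_b'|=O(b^{-1})$ with~\eqref{eqn:Breslow} yields the advertised $O_p(b^{-1}n^{-1/2})$ control on $[\ell,M]$.

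For the limit distribution, I would instead rewrite the Breslow estimator in its sum form, obtaining
\[
\lambda_n^s(x)=\frac{1}{n}\sum_{i=1}^n\frac{\Delta_i k_b(x-T_i)}{\Phi_n(T_i;\hat\beta_n)},
\]
and linearize by replacing $\Phi_n(\cdot;\hat\beta_n)$ by $\Phi(\cdot;\beta_0)$. The substitution of $\Phi_n$ uses~\eqref{eqn:Phi}, and the substitution of $\hat\beta_n$ uses $\hat\beta_n-\beta_0=O_p(n^{-1/2})$ together with (A2) to control a derivative-in-$\beta$ term. After linearization, the centered quantity becomes an average of i.i.d.\ variables $\Delta_i k_b(x-T_i)/\Phi(T_i;\beta_0)$, whose expectation is $\int k_b(x-u)\lambda_0(u)\,\mathrm{d}u$ (feeding exactly the bias $\mu$ at rate $n^{m/(2m+1)}$) and whose variance satisfies $n\mathrm{Var}(\cdots)=\frac{1}{b}\frac{\lambda_0(x)}{\Phi(x;\beta_0)}\int k^2+o(b^{-1})$ by a change of variable $u=x-by$. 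With $nb\to c\cdot n^{2m/(2m+1)}$, the Lindeberg–Feller CLT, after checking the Lindeberg condition via the boundedness of the summands by $O(b^{-1})$, produces the $N(\mu,\sigma^2)$ limit with the claimed constants.

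The main obstacle is carrying out the linearization cleanly: one needs both $\Phi_n-\Phi$ and $\hat\beta_n-\beta_0$ to enter the smoothed sum only as $o_p(n^{-m/(2m+1)})$, rather than at the coarser $O_p(n^{-1/2})$ rate that those errors have without smoothing. This is possible precisely because $m\geq 2$ gives $m/(2m+1)<1/2$, but executing it requires bounding a smooth-kernel-weighted functional of $\Phi_n-\Phi$ uniformly near $\beta_0$, which in turn relies on empirical-process control of the relevant class of indicator-exponential functions, much in the spirit of the bounds behind~\eqref{eqn:approx vn wn} that were used in the MSLE analysis. Once this step is in place, bias identification and the CLT are routine.
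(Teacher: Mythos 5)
Your proposal follows the same route as the paper's proof: reduce to the naive estimator $\lambda_n^s$ via Corollary~\ref{cor:ISBE=naive}, Taylor-expand the deterministic bias using $m$-orthogonality, control the stochastic part via integration by parts together with~\eqref{eqn:Breslow}, and for the limit law rewrite $\lambda_n^s(x)$ as the i.i.d.\ sum $\frac1n\sum_i\Delta_i k_b(x-T_i)/\Phi_n(T_i;\hat\beta_n)$, replace $1/\Phi_n(\cdot;\hat\beta_n)$ by $1/\Phi(\cdot;\beta_0)$ with an $O_p(n^{-1/2})$-sized error that is killed by the slower rate $n^{m/(2m+1)}$, and apply Lindeberg--Feller to the remaining centred sum. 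One small inaccuracy worth noting: the negligibility of the linearization error follows from $m/(2m+1)<1/2$, which holds for \emph{all} $m\ge1$, not ``precisely because $m\ge2$''; the restriction $m\ge2$ enters for other reasons (smoothness needed for Corollary~\ref{cor:ISBE=naive} and the bias identification), not for the linearization step.
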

According to Corollary~\ref{cor:ISBE=naive}, the naive estimator from~\eqref{def:naive est GS} has
the same limiting distribution described in Theorem~\ref{theo:as.distrGS}.
In this case we recover a result similar to Theorem~3.2 in~\cite{wells1994}.
As can be seen from~\eqref{eq:asymp norm SM SG} and~\eqref{def:mutilde}, the limiting distribution of the
ISBE in Theorem~\ref{theo:as.distrGS} is completely the same the one for the smoothed MLE and smoothed
Grenander-type estimator, as provided by Theorem~3.5 in~\cite{LopuhaaMustaSI2016}.
The following theorem shows that $\tilde{\lambda}_n^{GS}(x)$ is in fact asymptotically equivalent
to both these estimators.
In particular, this means that the order of smoothing and isotonization for the Grenander-type estimator
yields exactly the same limit behavior.
This is in line with the findings in~\cite{mammen1991} and~\cite{vdvaart-vdlaan2003}.
In order to use results from~\cite{LopuhaaMustaSI2016}, we have to strengthen condition (A2) slightly.
\begin{theo}
\label{theo:ISBE asymptotic equivalence}
Suppose that (A1) holds and
\begin{itemize}
\item[(A2')]
there exists $\epsilon>0$ such that
\[
\sup_{|\beta-\beta_0|\leq\epsilon}\E
\left[
|Z|^2
\left(
\mathrm{e}^{2\beta'Z}+\mathrm{e}^{4\beta'Z}
\right)
\right]<\infty.
\]
\end{itemize}%
Fix $x\in(0,\tau_h)$ and $\tau^*\in(x,\tau_H)$.
Assume that $\lambda_0$ is $m\geq2$ times continuously differentiable, with $\lambda'_0$ uniformly bounded from below by a strictly positive constant.
Assume that $t\mapsto \Phi(t;\beta_0)$ is differentiable with a bounded derivative in a neighborhood of $x$
and let~$k$ be $m$-orthogonal satisfying~\eqref{def:kernel}.
Let~$\tilde{\lambda}_n^{GS}$ be the left derivative of the greatest convex minorant on $[0,\tau^*]$ of $\Lambda_n^s$ defined in~\eqref{def:smoothed Breslow}
and suppose that $n^{1/(2m+1)}b\to c>0$.
Let $\tilde\lambda_n^{SG}$ be the smoothed Grenander-type estimator defined in~\cite{LopuhaaMustaSI2016}.
Then
\[
n^{m/(2m+1)}
\left(
\hat\lambda_n^{GS}(x)-\tilde\lambda_n^{SG}(x)
\right)
\to 0,
\]
in probability, and similarly if we replace $\tilde\lambda_n^{SG}(x)$
by the smoothed maximum likelihood estimator $\hat\lambda_n^{SM}(x)$, defined in~\cite{LopuhaaMustaSI2016}.
This also holds if we replace~$\tilde{\lambda}^{GS}_n(x)$ with $\tilde{\lambda}_n^{\mathrm{naive}}(x)$, defined in~\eqref{def:naive est GS}.
\end{theo}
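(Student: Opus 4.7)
The plan is to use Corollary~\ref{cor:ISBE=naive} to replace $\tilde{\lambda}_n^{GS}(x)$ by the naive estimator $\tilde{\lambda}_n^{\mathrm{naive}}(x)=\int k_b(x-u)\,\mathrm{d}\Lambda_n(u)$ (this last equality following from integration by parts and~\eqref{def:vn GS}), and then to compare $\tilde{\lambda}_n^{\mathrm{naive}}$ with $\tilde{\lambda}_n^{SG}$ via a single integration by parts. Writing $\tilde\Lambda_n$ for the greatest convex minorant of $\Lambda_n$ on $[0,\tau^*]$, so that the Grenander-type estimator $\tilde\lambda_n$ of~\cite{LopuhaaNane2013} is its left derivative, the smoothed Grenander-type estimator from~\cite{LopuhaaMustaSI2016} can be written as
\[
\tilde{\lambda}_n^{SG}(x)=\int k_b(x-u)\tilde\lambda_n(u)\,\mathrm{d}u=\int k_b(x-u)\,\mathrm{d}\tilde\Lambda_n(u).
\]
For $x$ in a compact subinterval $[\ell,M]\subset(0,\tau^*)$ and $b$ small enough, the support of $u\mapsto k_b(x-u)$ sits inside $[0,\tau^*]$, and integration by parts, whose boundary terms vanish, yields
\[
\tilde{\lambda}_n^{\mathrm{naive}}(x)-\tilde{\lambda}_n^{SG}(x)=\int k_b'(x-u)\bigl(\Lambda_n(u)-\tilde\Lambda_n(u)\bigr)\,\mathrm{d}u.
\]

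The heart of the argument is then a uniform bound on the distance between the Breslow estimator and its greatest convex minorant. Because $\Lambda_0$ is strictly convex on $[\ell,M]$ (since $\lambda_0'$ is bounded below by a positive constant) and $\sup_{u\in[0,M]}|\Lambda_n-\Lambda_0|=O_p(n^{-1/2})$ by~\eqref{eqn:Breslow}, classical arguments on the distance between a stochastic process and the convex minorant of a nearby strictly convex function, as in~\cite{durot2007} and as used in~\cite{LopuhaaNane2013} to derive the $n^{1/3}$-rate of $\tilde\lambda_n$, give
\[
\sup_{u\in[\ell,M]}\bigl|\Lambda_n(u)-\tilde\Lambda_n(u)\bigr|=O_p\!\bigl(n^{-2/3}(\log n)^{2/3}\bigr).
\]
Combined with the elementary estimate $\int|k_b'(x-u)|\,\mathrm{d}u=b^{-1}\int|k'(y)|\,\mathrm{d}y=O(b^{-1})$ and with $b\asymp n^{-1/(2m+1)}$, this yields
\[
n^{m/(2m+1)}\bigl|\tilde{\lambda}_n^{\mathrm{naive}}(x)-\tilde{\lambda}_n^{SG}(x)\bigr|=O_p\!\bigl(n^{(m+1)/(2m+1)-2/3}(\log n)^{2/3}\bigr),
\]
which tends to zero precisely when $(m+1)/(2m+1)<2/3$, i.e., whenever $m\geq 2$.

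The statement with $\tilde{\lambda}_n^{\mathrm{naive}}$ in place of $\tilde{\lambda}_n^{GS}$ follows at once from Corollary~\ref{cor:ISBE=naive}, and the equivalence with $\hat\lambda_n^{SM}$ is obtained by adding the conclusion above to the second line of~\eqref{eq:asymp norm SM SG}. The main obstacle is the $O_p(n^{-2/3})$-type estimate for $\Lambda_n-\tilde\Lambda_n$: while classical for the empirical CDF and the standard Grenander set-up, its transfer to the Cox model must accommodate the plug-in $\Phi_n(\cdot;\hat\beta_n)$ inside $\Lambda_n$, and it is precisely for this uniform (in $\beta$ near $\beta_0$) control of the exponential terms that the strengthened moment condition~(A2') is invoked, through the auxiliary envelope estimates of~\cite{LopuhaaMustaSI2016}.
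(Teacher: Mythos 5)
Your route is genuinely different from the paper's. You compare $\tilde{\lambda}_n^{\mathrm{naive}}(x)$ and $\tilde{\lambda}_n^{SG}(x)$ directly by integrating by parts against the difference $\Lambda_n-\tilde\Lambda_n$ between the Breslow estimator and its greatest convex minorant, and then invoke a Kiefer--Wolfowitz type bound
\[
\sup_{u\in[\ell,M]}\bigl|\Lambda_n(u)-\tilde\Lambda_n(u)\bigr|=O_p\bigl(n^{-2/3}(\log n)^{2/3}\bigr).
\]
The paper instead shows that both quantities admit the \emph{same} asymptotic representation,
\[
n^{m/(2m+1)}\int k_b(x-u)\,\mathrm{d}\Lambda_0(u)
+n^{m/(2m+1)}\int\frac{\delta k_b(x-u)}{\Phi(u;\beta_0)}\,\mathrm{d}(\p_n-\p)(u,\delta,z)+o_p(1),
\]
the one for $\tilde\lambda_n^{SG}$ coming from the proof of Theorem~3.5 in~\cite{LopuhaaMustaSI2016}, and the one for $\tilde\lambda_n^{\mathrm{naive}}$ being read off from the proof of Theorem~\ref{theo:as.distrGS} here; the difference is then trivially $o_p(1)$. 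Your integration-by-parts step, the boundary-term argument, and the exponent bookkeeping showing $(m+1)/(2m+1)<2/3$ for $m\geq 2$ are all correct.

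The genuine gap is the Kiefer--Wolfowitz bound itself. Neither~\cite{durot2007} nor~\cite{LopuhaaNane2013} gives a sup-norm bound on $\Lambda_n-\tilde\Lambda_n$ in the Cox model: what is established there is the \emph{pointwise} $n^{1/3}$-rate and non-normal limit of the Grenander-type estimator $\tilde\lambda_n$, which is a weaker and logically different statement. A Kiefer--Wolfowitz inequality for the Breslow estimator is a separate, nontrivial result — the classical arguments for the empirical CDF do not transfer automatically because $\Lambda_n$ is a sum of increments $\Delta_i/\bigl(n\,\Phi_n(T_i;\hat\beta_n)\bigr)$ with a random plug-in denominator, not an empirical process with i.i.d.\ uniform increments. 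You flag this yourself as ``the main obstacle'' and then dismiss it by gesturing at (A2') and ``auxiliary envelope estimates'' of~\cite{LopuhaaMustaSI2016}; but (A2') only controls second moments of $|Z|\mathrm{e}^{\beta'Z}$ and does not by itself produce the $n^{-2/3}$ rate of the GCM gap. Either you need to cite a theorem that states the required Kiefer--Wolfowitz bound for $\Lambda_n$ under the present assumptions, or prove it — and at that point you have done more work than the paper's two-representation argument requires. Until that is supplied, the proposal is a valid alternative strategy with a missing cornerstone.
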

The proof of Theorem~\ref{theo:ISBE asymptotic equivalence} can be found in Appendix~\ref{subsec:proofs GS}.

\section{Numerical results for pointwise confidence intervals}
\label{sec:conf-int}

In this section we illustrate the finite sample performance of the two estimators considered
in Sections~\ref{sec:MSLE} and~\ref{sec:GS} by constructing pointwise confidence intervals for the baseline hazard rate.
From Theorems~\ref{theo:distrMS} and~\ref{theo:as.distrGS},
it can be seen that the asymptotic $100(1-\alpha)\%$-confidence intervals at the point $x_0\in(0,\tau_H)$
are of the form
$\widehat{\lambda}_n^{IS}(x_0)
 -n^{-2/5}
(\widehat{\mu}_n(x_0)\pm\widehat{\sigma}_n(x_0)q_{1-\alpha/2})$,
where $q_{1-\alpha/2}$ is the $(1-\alpha/2)$ quantile of the standard normal distribution,
$\widehat{\lambda}_n^{IS}(x_0)$ is the isotonized smooth estimator at hand (either MSLE or ISBE),
and $\widehat{\mu}_n(x_0)$, $\widehat{\sigma}_n(x_0)$  are corresponding plug-in estimators of the asymptotic mean and standard deviation, respectively.
However, from the expression of the asymptotic mean in Theorems~\ref{theo:distrMS} and~\ref{theo:as.distrGS} for $m=2$,
it is obvious that obtaining the plug-in estimators requires estimation of second derivatives of~$\lambda_0$, $\Phi$ and~$h$.
Since accurate estimation of derivatives is a hard problem, we choose to avoid it by using undersmoothing.
This procedure is shown to be preferred to bias estimation,
because it is computationally more convenient and leads to better results
(see also~\cite{Hall92}, \cite{GJ15}, \cite{CHT06}).
Undersmoothing consists of using a bandwidth of a smaller order than the optimal one (in our case~$n^{-1/5}$).
As a result, the bias of $n^{2/5}( \widehat\lambda_n^{IS}(x_0)-\lambda_0(x_0))$, which is of the order $n^{2/5}b^2$ (see~\eqref{eqn:asymptotic_mean}), will converge to zero. On the other hand, the asymptotic variance is $n^{-1/5}b^{-1}\sigma^2$ (see~\eqref{eq:EYn2^2 GS} with $m=2$).
For example, with $b=n^{-1/4}$, asymptotically $n^{2/5}( \widehat\lambda_n^{IS}(x_0)-\lambda_0(x_0))$ behaves like a normal distribution with mean $n^{-1/10}$ and variance $n^{1/20}\sigma^2$. Hence, the confidence interval becomes
\begin{equation}
\label{def:confint2}
\widehat\lambda_n^{IS}(x_0)
\pm n^{-3/8}
\widehat{\sigma}_n(x_0)q_{1-\alpha/2},
\end{equation}
where
\[
\widehat{\sigma}_n(x_0)=\frac{\widehat{\lambda}_n^{IS}(x_0)}{c \Phi_n(x_0;\hat{\beta}_n)}\int_{-1}^1k(y)^2\,\mathrm{d} y.
\]
In our simulations, the event times are generated from a Weibull baseline distribution with shape parameter $1.5$ and scale
parameter $1$.
The real valued covariate and the censoring time are chosen to be uniformly distributed on the interval $(0,1)$ and we take $\beta_0=0.5$.
Confidence intervals are calculated at the point $x_0=0.5$ using 1000 sets of data.
We take bandwidth $b=cn^{-1/4}$, with $c=1$, and kernel function $k(u)=(35/32)(1-u^2)^3\1_{\{|u|\leq 1\}}$.

\begin{table}[t]
\begin{tabular}{ccccccccccc}
\toprule
      &&    \multicolumn{2}{c}{MSLE}&   \multicolumn{2}{c}{ISBE} &&    \multicolumn{2}{c}{MSLE$^*$}&   \multicolumn{2}{c}{ISBE}$^*$ \\
$n$     && AL    & CP & AL    & CP    && AL    & CP & AL    & CP  \\
100   &&  1.035 & 0.779 & 1.007 & 0.740 && 1.050 & 0.970 & 0.981 & 0.911 \\
500   &&  0.562 & 0.840 & 0.554 & 0.841 && 0.556 & 0.979 & 0.547 & 0.940 \\
1000  &&  0.424 & 0.846 & 0.428 & 0.843 && 0.429 & 0.976 & 0.424 & 0.953\\
5000  &&  0.234 & 0.892  & 0.234 & 0.888 && 0.234 & 0.975 & 0.248 & 0.960\\
\bottomrule
\end{tabular}
\caption{The average length (AL) and the coverage probabilities (CP) for $95\%$ pointwise confidence intervals of the baseline hazard rate at the point $x_0=0.5$ based on the asymptotic distribution.
MSLE and ISBE use $\hat{\beta}_n$, while MSLE$^*$ and ISBE$^*$ use $\beta_0$.}
\label{tab:1}
\end{table}
Table~\ref{tab:1} shows the performance of the estimators.
The four columns corresponding to MSLE and ISBE list the average length (AL) and the coverage probabilities (CP)
of the confidence intervals given in~\eqref{def:confint2} for various sample sizes.
Clearly, the coverage probabilities are far from the nominal level of $95\%$.
However, smoothing does lead to significantly better results in comparison with the non-smoothed estimators
(e.g., the traditional Grenander-type estimator considered in~\cite{LopuhaaNane2013} has coverage probabilities $0.468$, $0.479$, $0.593$, $0.642$ and $0.783$).
The performance depends strongly on the choice of the constant $c$ in the bandwidth $b=cn^{-1/4}$,
because the asymptotic length is inversely proportional to $c$ (see~\eqref{def:confint2}).
This means that, by choosing a smaller $c$, one will get wider confidence intervals and higher coverage probabilities.
Unfortunately, it is not clear what would be the optimal choice of such a constant.
This is a common problem in the literature (e.g., see~\cite{CHT06} and \cite{GCM96}).
As indicated in~\cite{mullerwang1990}, cross-validation methods that consider a trade-off between bias and variance
suffer from the fact that the variance of the estimator increases as one approaches the endpoint of the support.
This is even enforced in our setting, because the bias is also decreasing when approaching the endpoint of the support.
We tried a locally adaptive choice of the bandwidth, as proposed in~\cite{MW90},
by minimizing an estimator of the Mean Squared Error, but in our setting this method did not lead to better results.
A simple choice is to take $c$ equal to the range of the date (see~\cite{GJ15}), which in our case corresponds to $c=1$.

More importantly, estimation of the parameter $\beta_0$ has a greater effect on the accuracy of the results.
The last four columns of Table~\ref{tab:1} show that if we use the true value of $\beta_0$ in the computation of the estimators , the coverage probabilities improve significantly and the~ISBE seems to perform better.
\begin{figure}[t]
\includegraphics[ width=0.45\textwidth]{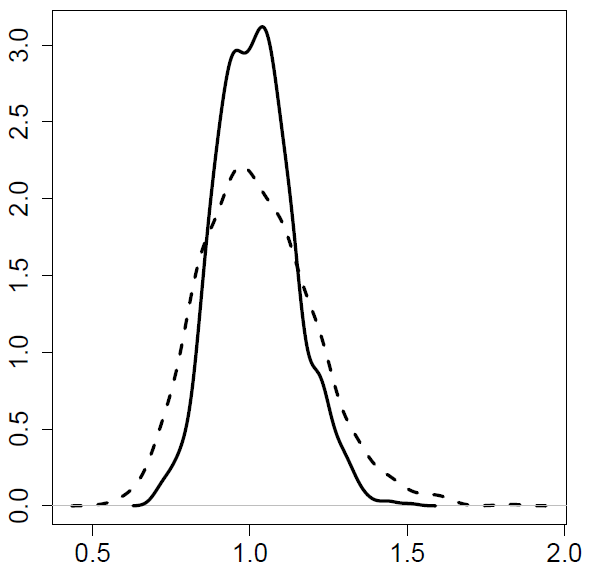}
\quad
\includegraphics[ width=0.45\textwidth]{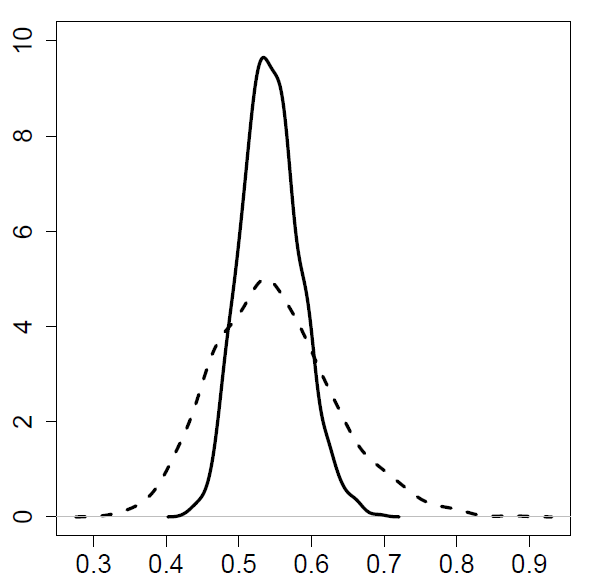}
\caption{Left panel: Values of the ISBE computed using the true parameter~$\beta_0$ (solid line)
and the Cox's partial MLE~$\hat{\beta}_n$ (dashed line).
Right panel: Values of the length of the confidence interval computed using the true parameter $\beta_0$ (solid line)
and the Cox's partial MLE~$\hat{\beta}_n$ (dashed line).}
\label{fig:est_length_beta_0}
\end{figure}
Things are illustrated in Figure~\ref{fig:est_length_beta_0},
which shows the kernel densities of the values of the ISBE and the corresponding lengths of the confidence intervals,
computed using the true parameter $\beta_0$ and the partial ML estimator $\hat{\beta}_n$, for $1000$ samples of size $n=500$.
We conclude that the use of $\hat{\beta}_n$ leads to underestimation or overestimation of both $\lambda_0(x_0)$
as well as the corresponding length of the confidence interval.
In fact, underestimation of both goes hand in hand, since the variance of the ISBE is proportional to $\lambda_0(x_0)$,
and similarly for overestimation.
As can be seen in Table~\ref{tab:1}, estimation of $\beta_0$ does not seem to effect the length of the confidence interval.
However, the coverage probabilities change significantly.
When $\lambda_0(x_0)$ is underestimated, the midpoint of the confidence interval lies below $\lambda_0(x_0)$
and the simultaneous underestimation of the length even stronger prevents the confidence interval to cover $\lambda_0(x_0)$.
When $\lambda_0(x_0)$ is overestimated, the midpoint of the confidence interval lies above $\lambda_0(x_0)$,
but the simultaneous overestimation of the length does not compensate this,
so that the confidence interval to often fails to cover $\lambda_0(x_0)$.

Although the partial ML estimator $\hat{\beta}_n$ is a standard estimator for the regression coefficients,
the efficiency results are only asymptotic.
As pointed out in~\cite{CO84}  and ~\cite{RZ11}, for finite samples the use of the partial likelihood leads to a loss of accuracy.
Recently, \cite{RZ11} introduced
the~MLE for $\beta_0$ obtained by joint maximization of  the loglikelihood in~\eqref{def:loglikelihood cox} over both $\beta$ and $\lambda_0$.
It was shown that for small and moderate sample sizes, the joint MLE for $\beta_0$ performs better than~$\hat{\beta}_n$.
However, in our case, using this estimator instead of $\hat{\beta}_n$, does not bring any essential difference in the coverage probabilities.
Finally, we notice that the performance of the MSLE and ISBE are comparable.

\begin{figure}[t]
\centering
\subfloat[][MSLE]
{\includegraphics[width=0.45\textwidth]{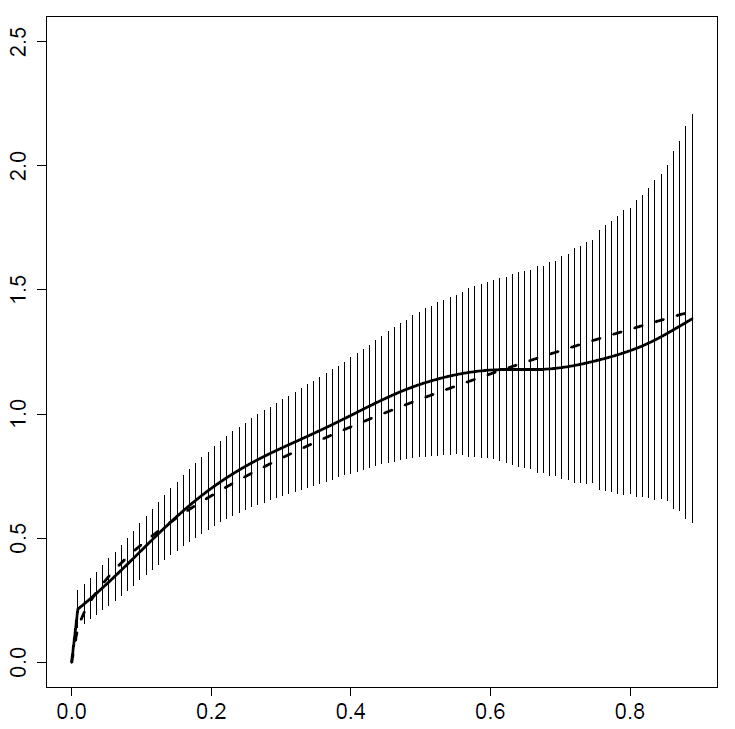}} \quad
\subfloat[][ISBE]
{\includegraphics[width=0.45\textwidth]{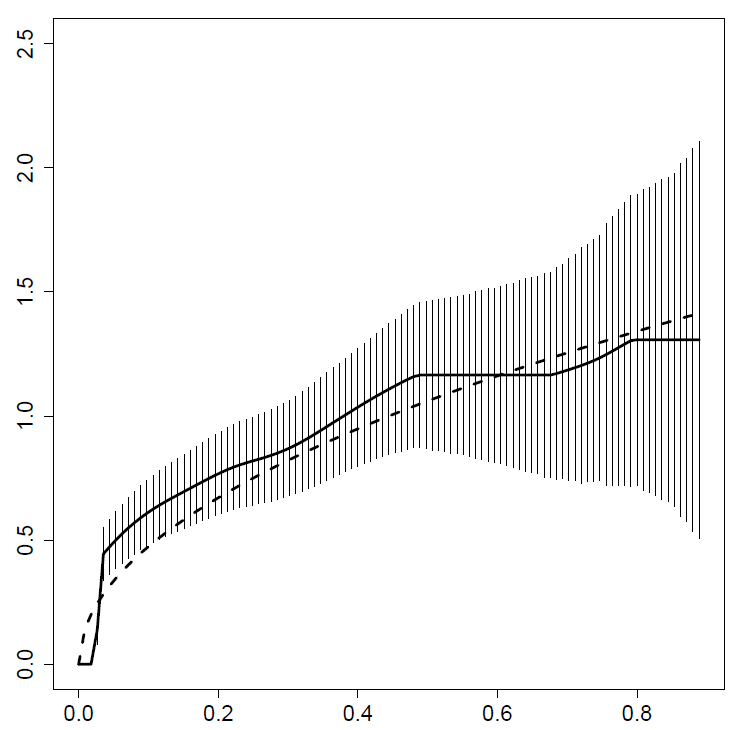}}
\caption{$95\%$ confidence intervals based on the asymptotic distribution for the baseline hazard rate using undersmoothing.}
\label{fig:subfig4}
\end{figure}
The behavior of the two methods for a fixed sample size~$n=500$ at different points of the support is illustrated in Figure~\ref{fig:subfig4}.
The results are again comparable and the common feature is that the length increases as we move to the left boundary.
This is due to the fact that the length is proportional to the asymptotic standard deviation, which in this case turns out to be increasing,
$\sigma^2(x)=1.5 \sqrt{x}/(c\Phi(x;\beta_0))$.
Note that $\Phi(x;\beta_0)$ defined in~\eqref{eq:def Phi} is decreasing.

An alternative to confidence intervals based on the asymptotic distribution relies on the bootstrap.
Studies on bootstrap confidence intervals in the Cox model are investigated in~\cite{Burr94} and~\cite{XSY2014}.
We follow one of their proposals for a smooth bootstrap.
We fix the covariates and we generate the event time $X_i^*$ from a smooth estimate for the cdf of~$X$ conditional on $Z_i$:
\[
\hat{F}_n(x|Z_i)
=
1-\exp\left\{-\Lambda^s_n(x)\mathrm{e}^{\hat{\beta}'_nZ_i} \right\},
\]
where $\Lambda^s_n$ is the smoothed Breslow estimator defined in~\eqref{def:smoothed Breslow}.
The censoring times~$C^*_i$ are generated from the Kaplan-Meier estimate $\hat{G}_n$.
Then we take~$T^*_i=\min(X^*_i,C^*_i)$ and $\Delta^*_i=\1_{\{X^*_i\leq C^*_i\}}$.
For constructing the confidence intervals, we used $1000$ bootstrap samples $(T^*_i,\Delta^*_i,Z_i)$,
and for each boostrap sample we computed~$\tilde{\lambda}^{MS,*}_n(x_0)$ and $\tilde{\lambda}^{GS,*}_n(x_0)$.
Here the kernel function is the same as before and the bandwidth is taken to be $b=n^{-1/5}$.
Then, the $100(1-\alpha)\%$ confidence interval is given by
\[
\left[
q^*_{\alpha/2}(x_0),q^*_{1-\alpha/2}(x_0)
\right],
\]
where $q^*_{\alpha}(x_0)$ is the $\alpha$-percentile of the $1000$ values of the estimates $\tilde{\lambda}^{MS,*}_n(x_0)$ or $\tilde{\lambda}^{GS,*}_n(x_0)$.

We investigate the behavior of the two estimators in the following two different settings
\[
\begin{tabular}{ccc}
 \toprule
     & \textbf{Model 1}   & \textbf{Model 2}   \\
\\[-10pt]
$X$   & Weibull (1.5,1)  & Weibull (3,1)  \\
$C$  & Uniform (0,1)  & Uniform (0,2)   \\
$Z$  & Uniform (0,1) & Bernoulli (0.5)   \\
$\beta_0$ & 0.5 & 0.1\\
$x_0$ & 0.5 & 0.5\\
\bottomrule\\
\end{tabular}
\]
Note that Model 1 is the same as in the previous simulation. The main differences between the two models are the following:
the baseline hazard rate is slightly increasing in model 1 and strongly increasing in model 2,
the covariates have a smaller effect on the hazard rate in model 2,
and model 1 corresponds to $35\%$ uncensored observations, while in the model 2 we have about $50\%$ uncensored observations.
It is also worthy noticing that, for model 1, we calculate the confidence intervals at the middle point of the support $x_0=0.5$
in order to avoid boundary problems, while, in model 2 we again consider $x_0=0.5$,
because the estimation becomes more problematic on the interval $[1,2]$.
This is probably due to the fact that we only have a few observations in this time interval,
on which the hazard rate is strongly increasing.

The average length and the empirical coverage for $1000$ iterations and different sample sizes are reported in
Table~\ref{tab:bootstrap1}.
\begin{table}[h]
\begin{tabular}{ccccccccccc}
\toprule
&&    \multicolumn{4}{c}{\textbf{Model 1}} &&    \multicolumn{4}{c}{\textbf{Model 2}} \\
\\[-10pt]
\cline{3-6} \cline{8-11}
\\[-10pt]
&&    \multicolumn{2}{c}{MSLE}&   \multicolumn{2}{c}{ISBE} &&    \multicolumn{2}{c}{MSLE}&   \multicolumn{2}{c}{ISBE} \\
$n$     && AL    & CP   & AL &CP  && AL    & CP   & AL & CP \\
100   && 1.553 & 0.943  & 1.625 & 0.914 && 0.766 & 0.970 & 0.858 & 0.975 \\
500   && 0.701 & 0.947 & 0.726 & 0.941  && 0.362 & 0.959 & 0.395 & 0.951 \\
1000  &&0.512& 0.949  & 0.527 & 0.963 && 0.271 & 0.959 & 0.286 & 0.962 \\
\bottomrule\\
\end{tabular}
\caption{The average length (AL) and the coverage probabilities (CP) for the $95\%$ bootstrap confidence intervals
of the baseline hazard rate at the point $x_0=0.5$ for Model 1 and 2.}
\label{tab:bootstrap1}
\end{table}
We observe that bootstrap confidence intervals behave better that confidence intervals constructed on the basis of the asymptotic distribution,
i.e., the coverage probabilities are closer to the nominal level of $95\%$.
Results also indicate that the MSLE behaves slightly better than the ISBE since, in general,
it leads to shorter confidence intervals and better coverage probabilities.

\appendix
\section{Proofs}
\label{sec:appendix}
\subsection{Proofs for Section~\ref{sec:MSLE}}
\label{subsec:proofs MSLE}

\begin{proof}[Proof of Lemma~\ref{lem:char MSLE}]
We start by writing
\[
\begin{split}
\ell^s_\beta(\lambda_0)
&=
\frac{1}{n}\sum_{i=1}^n
\left\{
\Delta_i\int_0^\infty\log \lambda_0(t)k_b(t-T_i)\,\mathrm{d}t
-
\mathrm{e}^{\beta'Z_i}\int_0^\infty
\left(
\int_0^t \lambda_0(u)\,\mathrm{d}u
\right)k_b(t-T_i)
\,\mathrm{d}t\right\}\\
&=
\int_0^\infty\log \lambda_0(t)
\left( \frac{1}{n}\sum_{i=1}^n \Delta_i
k_b(t-T_i)\right)\,\mathrm{d}t\\
&\qquad\qquad\qquad\qquad\qquad\qquad-
\int_0^\infty
\lambda_0(u)
\left(\frac{1}{n}\sum_{i=1}^n\mathrm{e}^{\beta'Z_i}
\int_{u}^\infty k_b(t-T_i)\,\mathrm{d}t\right)\,\mathrm{d}u,
\end{split}
\]
which is equal to
\[
\begin{split}
\int_0^\infty
\Big\{
v_n(t)\log\lambda_0(t)-w_n(t;\beta)\lambda_0(t)
\Big\}\,\mathrm{d}t
=
\int_0^\infty
\left\{
\frac{v_n(t)}{w_n(t;\beta)}\log\lambda_0(t)-\lambda_0(t)
\right\}w_n(t;\beta)\,\mathrm{d}t,
\end{split}
\]
with $v_n$ and $w_n$ defined in~\eqref{eqn:v_n w_n}.
Maximizing the right hand side over nondecreasing $\lambda_0$ is equivalent to minimizing
\begin{equation}
\label{eq:DeltaPhi}
\int
\Delta_\Phi
\left(
\frac{v_n(t)}{w_n(t;\beta)},\lambda(t)
\right)
w_n(t;\beta)\,\mathrm{d}t
\end{equation}
over nondecreasing $\lambda$, where
$\Delta_\Phi(u,v)
=
\Phi(u)-\Phi(v)-(u-v)\phi(v)$,
with $\Phi(u)=u\log u$.
Theorem~1 in~\cite{GJ10} provides a characterization of the minimizer~$\hat{\lambda}^s_n(x;\beta)$ of~\eqref{eq:DeltaPhi},
and hence of the maximizer of $\ell^s_\beta$.
It is the unique solution of a generalized continuous isotonic regression problem, i.e.,
it is continuous and it is the minimizer of
\[
\psi(\lambda)=\frac{1}{2}\int \left(\lambda(x)-\frac{v_n(x)}{w_n(x;\beta)}\right)^2
w_n(x;\beta)\,\mathrm{d}x,
\]
over all nondecreasing functions $\lambda$ and can be described as the slope of the GCM of the graph defined by~\eqref{eqn:graph}.
\end{proof}

\begin{proof}[Proof of Lemma~\ref{lem:coincide}]
It is enough to prove that for an arbitrarily fixed $\epsilon>0$ and for sufficiently large $n$
\[
\p
\left(
{\lambda}_n^{\mathrm{naive}}(t)={\lambda}_n^{IS}(t),\text{ for all } t\in[\ell,M]
\right)
\geq 1-\epsilon.
\]
Recall that $\lambda^{IS}_n(t)$ is defined as the slope of the greatest convex minorant
$\big\{\big(X_n(t),\widehat Y_n(t)\big), t\in[0,\hat\tau]\big\}$ of the graph
$\big\{\big( X_n(t),Y_n(t))\big),t\in[0,\hat\tau]\big\}$.
We consider $Y_n$ on the interval $[\ell,M]$ and define the linearly extended  version of~$Y_n$ on $[0,\hat\tau]$ by
\[
Y^*_n(t)=
\begin{cases}
Y_n(\ell)+\big( X_n(t)- X_n(\ell)\big){\lambda}_n^{\mathrm{naive}}(\ell), &\text{ for } t\in[0,\ell),\\
Y_n(t), &\text{ for } t\in[\ell,M],\\
Y_n(M)+\big( X_n(t)- X_n(M)\big){\lambda}_n^{\mathrm{naive}}(M), &\text{ for } t\in(M,\hat\tau].
\end{cases}
\]
It suffices to prove that, for sufficiently large $n$,
\begin{equation}
\label{eq:cor prop1}
\p
\left(
\left\{
\big( X_n(t),Y^*_n(t)\big):t\in[0,\hat\tau]
\right\} \text{ is convex }
\right)\geq 1-\epsilon/2,
\end{equation}
and
\begin{equation}
\label{eq:cor prop2}
\p
\left(
Y^*_n(t)\leq Y_n(t),
\text{ for all }t\in [0,\hat\tau]
\right)\geq 1-\epsilon/2.
\end{equation}
Indeed, if~\eqref{eq:cor prop1} and~\eqref{eq:cor prop2} hold,
then with probability greater than or equal to $1-\epsilon$,
the curve $\big\{\big( X_n(t),Y^*_n(t)\big):t\in[0,\hat\tau]\big\}$ is a convex curve lying below the graph
$\big\{\big( X_n(t),Y_n(t)\big):t\in[0,\hat\tau]\big\}$, with $Y^*_n(t)=Y_n(t)$ for all $t\in[\ell,M]$.
Hence, $Y_n(t)=Y^*_n(t)\leq \widehat Y_n(t)\leq Y_n(t)$,
for all $t\in[\ell,M]$.
It follows that, for sufficiently large $n$,
\[
\p\left(
{\lambda}_n^{\mathrm{naive}}(t)=\frac{\mathrm{d}Y_n(t)}{\mathrm{d} X_n(t)}=\frac{\mathrm{d}\widehat Y_n(t)}{\mathrm{d} X_n(t)}={\lambda}_n^{IS}(t),
\text{ for all }t\in [\ell,M]
\right)\geq 1-\epsilon.
\]

To prove~\eqref{eq:cor prop1}, define the event
\[
A_n=\left\{{\lambda}_n^{\mathrm{naive}} \text{ is increasing on } [\ell-\eta_1,M+\eta_2]\right\},
\]
for $\eta_1\in(0,\ell)$ and $\eta_2\in(0,\hat\tau-M)$.
Note that on the intervals $[0,\ell)$ and $(M,\hat\tau]$,
the curve $\big\{\big( X_n(t),Y^*_n(t)\big):t\in[0,\hat\tau]\big\}$ is the tangent line of the graph $\big\{\big( X_n(t),Y_n(t)\big):t\in[0,\hat\tau]\big\}$
at the points $\big( X_n(\ell),Y_n(\ell)\big)$ and $\big( X_n(M),Y_n(M)\big)$.
As a result, on the event $A_n$ the curve is convex, so that together with condition~\eqref{cond c},
for sufficiently large $n$
\[
\p
\left(
\left\{
\big( X_n(t),Y^*_n(t)\big):t\in[0,\hat\tau]
\right\} \text{ is convex }
\right)
\geq
\p(A_n)
\geq
1-\epsilon/2.
\]
To prove~\eqref{eq:cor prop2}, we split the interval $[0,\hat\tau]$ in five different intervals
$I_1=[0,\ell-\eta_1),$ $I_2=[\ell-\eta_1,\ell),$ $I_3=[\ell,M],$ $I_4=(M,M+\eta_2]$, and $I_5=(M+\eta_2,\hat\tau]$,
and show that
\begin{equation}
\label{eqn:C}
\p
\left(
Y^*_n(t)\leq Y_n(t),\text{ for all }t\in I_i
\right)\geq 1-\epsilon/10,
\quad
i=1,\ldots,5.
\end{equation}
For $t\in I_3,$ $Y^*_n(t)=Y_n(t)$ and thus~\eqref{eqn:C} is trivial.
For $t\in I_2$, by the mean value theorem,
\[
Y_n(t)-Y_n(\ell)=\big( X_n(t)- X_n(\ell)\big)\,{\lambda}_n^{\mathrm{naive}}(\xi_t),
\]
for some $\xi_t\in[t,\ell]$.
Thus, since $ X_n(t)\leq X_n(\ell)$ according to condition~\eqref{cond a},
\begin{equation}
\label{eq:argument I2}
\begin{split}
&
\p
\left(
Y^*_n(t)\leq Y_n(t),\text{ for all }t\in I_2
\right)
\\
&=
\p
\left(
\big( X_n(t)- X_n(\ell)\big)
\big({\lambda}_n^{\mathrm{naive}}(\xi_t)-{\lambda}_n^{\mathrm{naive}}(\ell)\big)\geq 0,
\text{ for all }t\in I_2
\right)\\
&=
\p
\left(
\big({\lambda}_n^{\mathrm{naive}}(\xi_t)-{\lambda}_n^{\mathrm{naive}}(\ell)\big)\leq 0,
\text{ for all }t\in I_2\right)\\
&\geq
\p(A_n)\geq 1-\epsilon/10,
\end{split}
\end{equation}
for $n$ sufficiently large, according to condition~\eqref{cond c}.
The argument for $t\in I_4$ is exactly the same.
Furthermore, making use of condition~\eqref{cond d}, for each $t\in I_1,$ we obtain
\[
\begin{split}
Y_0(t)-Y_0(\ell)-\lambda_0(\ell)\big(X_0(t)-X_0(\ell)\big)
&=
\int_t^{\ell} \big(\lambda_0(\ell)-\lambda_0(u)\big)\,\mathrm{d}X_0(u)\\
&\geq
\int_{\ell-\eta_1}^{\ell}
\big(\lambda_0(\ell)-\lambda_0(u)\big)\,\mathrm{d}X_0(u).
\end{split}
\]
This implies that
\[
\begin{split}
Y_n(t)-Y^*_n(t)
&=
Y_n(t)-
Y_n(\ell)-\big( X_n(t)- X_n(\ell)\big){\lambda}_n^{\mathrm{naive}}(\ell)\\
&\geq
Y_n(t)-Y_0(t)+Y_0(\ell)-Y_n(\ell)+\lambda_0(\ell)\big(X_0(t)- X_n(t)\big)\\
&\quad+
\lambda_0(\ell)\big( X_n(\ell)-X_0(\ell)\big)
+
\big({\lambda}_n^{\mathrm{naive}}(\ell)-\lambda_0(\ell)\big)\big( X_n(\ell)- X_n(t)\big)\\
&\qquad+
\int_{\ell-\eta_1}^{\ell}\big(\lambda_0(\ell)-\lambda_0(u)\big)\mathrm{d}X_0(u)\\
&\geq
-2\sup_{t\in[0,\ell]}|Y_n(t)-Y_0(t)|
-2\lambda_0(\ell)\sup_{t\in[0,\ell]}| X_n(t)-X_0(t)|\\
&\qquad-
2|{\lambda}_n^{\mathrm{naive}}(\ell)-\lambda_0(\ell)|\sup_{t\in[0,\ell]}| X_n(t)|
+
\int_{\ell-\eta_1}^{\ell}\big(\lambda_0(\ell)-\lambda_0(u)\big)\mathrm{d}X_0(u).
\end{split}
\]
According to conditions~(b) and~(c), the first three terms on the right hand side tend to zero in probability.
This means that the probability on the left hand side of~\eqref{eqn:C} for $i=1$, is bounded
from below by
\[
\p
\left(
Z_n\leq \int_{\ell-\eta_1}^\ell\big(\lambda_0(\ell)-\lambda_0(u)\big)\mathrm{d}X_0(u)
\right),
\]
where $Z_n=o_p(1)$.
This probability is greater than $1-\epsilon/10$ for $n$ sufficiently large, since
\[
\int_{\ell-\eta_1}^\ell\big(\lambda_0(\ell)-\lambda_0(u)\big)\,\mathrm{d}X_0(u)
\geq
x_0(\ell)
\int_{\ell-\eta_1}^\ell\big(\lambda_0(\ell)-\lambda_0(u)\big)\,\mathrm{d}u
>0,
\]
using that $\lambda_0$ is strictly increasing.
For $I_5$ we can argue exactly in the same way.
\end{proof}

\begin{lemma}
\label{le:1}
Suppose that (A1)-(A2) hold.
Let $H^{uc}(t)$ and $\Phi(t;\beta_0)$ be defined in~\eqref{eq:def Huc} and~\eqref{eq:def Phi}, and let $h(t)=\mathrm{d}H^{uc}(t)/\mathrm{d}t$.
Suppose that $h$ and $t\mapsto\Phi(t;\beta_0)$ are $m\geq 1$ times continuously differentiable
and let $k$ be $m$-orthogonal satisfying~\eqref{def:kernel}.
Then, for each $0<\ell<M<\tau_H$, it holds
\begin{equation}
\label{eqn:lemma1-1}
\begin{split}
\sup_{t\in[\ell,M]}|v_n(t)-h(t)|
&=
O(b^m)+O_p(b^{-1}n^{-1/2}),\\
\sup_{t\in[\ell,M]}|w_n(t;\hat{\beta}_n)-\Phi(t;\beta_0)|
&=
O(b^m)+O_p(b^{-1}n^{-1/2}),
\end{split}
\end{equation}
where $v_n$, $w_n$ and $\Phi$ are defined in~\eqref{eqn:v_n w_n} and~\eqref{eq:def Phi}.
\end{lemma}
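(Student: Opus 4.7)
The strategy is to split each supremum into a deterministic bias and a stochastic fluctuation, with an extra term in the $w_n$ case for the replacement of $\hat\beta_n$ by $\beta_0$. For $v_n$, the change of variables $y=(t-s)/b$ gives $\E v_n(t)=\int k(y)h(t-by)\,\mathrm{d}y$; Taylor expanding $h$ around $t$ to order $m$ and invoking $m$-orthogonality of $k$ leaves only the remainder term, which is $O(b^m)$ uniformly on $[\ell,M]$ since $h^{(m)}$ is bounded on a slight enlargement of that interval. For the stochastic part I write $v_n(t)-\E v_n(t)=\int k_b(t-s)\,\mathrm{d}(H_n^{uc}-H^{uc})(s)$, with $H_n^{uc}$ the empirical sub-distribution associated to~\eqref{eq:def Huc}, and integrate by parts in~$s$. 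For $t\in[\ell,M]$ and $n$ sufficiently large, the support of $k_b(t-\cdot\,)$ lies inside $(0,\tau_H)$, so the boundary terms vanish and what remains is bounded by $b^{-1}\|k'\|_1\sup_s|H_n^{uc}-H^{uc}|=O_p(b^{-1}n^{-1/2})$ by the standard empirical CLT for the sub-distribution function.

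For $w_n$, I first replace $\hat\beta_n$ by $\beta_0$: the mean value bound $|\mathrm{e}^{\hat\beta_n'Z_i}-\mathrm{e}^{\beta_0'Z_i}|\leq|Z_i|\mathrm{e}^{|\xi'Z_i|}|\hat\beta_n-\beta_0|$ (with $\xi$ between $\hat\beta_n$ and $\beta_0$), together with the boundedness $\int_t^\infty k_b(u-T_i)\,\mathrm{d}u\in[0,1]$, assumption~(A2), and the $\sqrt n$-consistency of $\hat\beta_n$ from~\cite{Tsiatis81}, yields $\sup_t|w_n(t;\hat\beta_n)-w_n(t;\beta_0)|=O_p(n^{-1/2})$, well within the stated bound. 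The stochastic fluctuation $w_n(t;\beta_0)-\E w_n(t;\beta_0)$ is treated by the same integration-by-parts trick applied to the integrator $\widehat Q_n(s)=n^{-1}\sum_i\mathrm{e}^{\beta_0'Z_i}\1_{\{T_i\leq s\}}$; because $\int_t^\infty k_b(u-\cdot\,)\,\mathrm{d}u$ is uniformly bounded by~$1$, this in fact delivers the sharper rate $O_p(n^{-1/2})$. For the bias, Fubini gives
\[
\E w_n(t;\beta_0)-\Phi(t;\beta_0)=\int q(s)\bigl[\bar K_b(t-s)-\1_{\{s\geq t\}}\bigr]\,\mathrm{d}s=b\int_{-1}^1 q(t+bu)\psi(u)\,\mathrm{d}u,
\]
where $q=-\Phi'(\cdot;\beta_0)$, $\bar K_b(u)=\int_u^\infty k_b(v)\,\mathrm{d}v$, and $\psi(u)=\int_{-u}^1 k(y)\,\mathrm{d}y-\1_{\{u\geq 0\}}$ is supported on $[-1,1]$.

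The principal obstacle is this last bias estimate: since $q=-\Phi'(\cdot;\beta_0)$ is only $m-1$ times continuously differentiable, a naive Taylor argument would give merely $O(b^{m-1})$. The remedy is a short Fubini computation showing that the moments of the effective kernel $\psi$ satisfy
\[
\int_{-1}^1 u^j\psi(u)\,\mathrm{d}u=\frac{(-1)^j}{j+1}\int_{-1}^1 k(y)y^{j+1}\,\mathrm{d}y,
\]
which vanishes for $j=0,\dots,m-2$ by $m$-orthogonality of $k$. Taylor expanding $q$ to order $m-1$ with bounded remainder (using that $\Phi^{(m)}(\cdot;\beta_0)$ is continuous, hence bounded on $[\ell-b,M+b]$ for $n$ large) then leaves only the $u^{m-1}$-term contributing, producing a bias of order $b\cdot b^{m-1}=b^m$ uniformly on $[\ell,M]$ and completing both estimates in~\eqref{eqn:lemma1-1}.
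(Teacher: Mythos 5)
Your estimate for $v_n$ is essentially the paper's argument: split into the smoothed mean $h_s(t)=\int k_b(t-u)h(u)\,\mathrm{d}u$ and the fluctuation $v_n-h_s$, handle the former by Taylor expansion using $m$-orthogonality and the latter by integration by parts together with the $\sqrt n$-rate for $\sup|H_n^{uc}-H^{uc}|$ from the Donsker property.

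For $w_n$ you take a genuinely different and more circuitous route, and the ``principal obstacle'' you flag is one you create for yourself. The paper uses the identity $w_n(t;\beta)=\int_{-1}^1 k(y)\,\Phi_n(t-by;\beta)\,\mathrm{d}y$ (its display~\eqref{eqn:w}), which exhibits $w_n$ as a kernel smoother of $\Phi_n$ \emph{itself}, not of a derivative. With that in hand, the decomposition
\[
w_n(t;\hat\beta_n)-\Phi(t;\beta_0)
=\int k(y)\bigl\{\Phi_n(t-by;\hat\beta_n)-\Phi(t-by;\beta_0)\bigr\}\,\mathrm{d}y
+\int k(y)\bigl\{\Phi(t-by;\beta_0)-\Phi(t;\beta_0)\bigr\}\,\mathrm{d}y
\]
gives the stochastic term $O_p(n^{-1/2})$ at once from $\sup_x|\Phi_n(x;\hat\beta_n)-\Phi(x;\beta_0)|=O_p(n^{-1/2})$ (Lemma~4 of \cite{LopuhaaNane2013}), and the bias is a direct $m$-th order Taylor expansion of $\Phi$, which is $m$ times continuously differentiable by hypothesis; no loss of smoothness occurs. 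Your route instead passes to $q=-\Phi'$, which drops one order of differentiability, and then recovers it by computing the moments of the effective kernel $\psi(u)=\int_{-u}^1 k(y)\,\mathrm{d}y-\1_{\{u\geq 0\}}$. Your moment identity $\int u^j\psi(u)\,\mathrm{d}u=\frac{(-1)^j}{j+1}\int k(y)y^{j+1}\,\mathrm{d}y$ is correct, and since these vanish for $j=0,\dots,m-2$ by $m$-orthogonality, a Taylor expansion of $q$ to order $m-1$ indeed delivers $b\cdot O(b^{m-1})=O(b^m)$. So the argument closes, but it amounts to re-deriving, one level down, what the direct expansion of $\Phi$ gives for free. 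Likewise your two-stage treatment of the fluctuation (replace $\hat\beta_n$ by $\beta_0$, then integrate by parts against $\widehat Q_n$) works, but duplicates what the single invocation of the $\Phi_n$-process rate already provides.

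One small inaccuracy worth noting: your assertion that $\int_t^\infty k_b(u-T_i)\,\mathrm{d}u\in[0,1]$ holds only when $k\geq 0$, i.e.\ essentially for $m=2$. For $m>2$ the $m$-orthogonal kernel takes negative values and the correct uniform bound is $2\sup_{[-1,1]}|k|$ (this is exactly~\eqref{eq:bound int kb} in the paper). This does not change the order of your estimate, but the stated bound is wrong as written.
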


\begin{proof}
To obtain the first result in~\eqref{eqn:lemma1-1}, we write
\[
v_n(t)-h(t)=v_n(t)-h_s(t)+h_s(t)-h(t),
\]
where
\begin{equation}
\label{def:hs}
h_s(t)=\int k_b(t-u)\,h(u)\,\mathrm{d}u.
\end{equation}
By a change of variable and a Taylor expansion, using that $k$ is $m$-orthogonal,
we deduce that
\begin{equation}
\label{eq:taylor h}
\begin{split}
h_s(t)-h(t)
&=
\int k_b(t-u)h(u)\,\mathrm{d}u-h(t)
=
\int_{-1}^1 k(y)
\left\{
h(t-by)-h(t)
\right\}\,\mathrm{d}y\\
&=
\int_{-1}^1 k(y)
\left\{
-h'(t)by+\cdots+\frac{h^{(m-1)}(t)}{(m-1)!}(-by)^{m-1}+\frac{h^{(m)}(\xi_{ty})}{m!}(-by)^m
\right\}\,\mathrm{d}y\\
&=
\frac{(-b)^m}{m!}
\int_{-1}^1
h^{(m)}(\xi_{ty})k(y)y^m\,\mathrm{d}y,
\end{split}
\end{equation}
for some $|\xi_{ty}-t|<|by|$.
It follows that
\begin{equation}
\label{eq:h term}
\sup_{t\in[\ell,M]}|h_s(t)-h(t)|
\leq
\frac{b^m}{m!}
\sup_{t\in[0,\tau_H]}\left|h^{(m)}(t)\right|
\int_{-1}^1
|k(y)||y|^m\,\mathrm{d}y
=
O(b^m).
\end{equation}
Let $H^{uc}_n$ be the empirical sub-distribution function of the uncensored observations, defined by
\[
H_n^{uc}(x)
=
\int \delta \1_{\{t\leq x\}}\,\mathrm{d}\p_n(t,\delta,z).
\]
Then integration by parts yields
\begin{equation}
\label{eq:v minus h}
\begin{split}
v_n(t)-h_s(t)&=\int k_b(t-u)\,\mathrm{d}(H^{uc}_n-H^{uc})(u)\\
&=
-\int \frac{\partial}{\partial u}k_b(t-u)\,(H^{uc}_n-H^{uc})(u)\,\mathrm{d}u\\
&=
\frac{1}{b}\int_{-1}^1 k'(y)\,(H^{uc}_n-H^{uc})(t-by)\,\mathrm{d}y.
\end{split}
\end{equation}
Note that $H^{uc}_n(x)-H^{uc}(x)=\int \delta\1_{\{u\leq x\}}\,\mathrm{d}(\p_n-\p)(u,\delta,z)$.
Because the class of functions $\F=\left\{f(\cdot;x)\,:x\in[0,\tau_H]\right\}$,
with $f(u;x)=\1_{\{u\leq x\}}$, is a VC-class (e.g., see Example~2.6.1 in~\cite{VW96}),
also the class of functions $\{\mathcal{G}=\delta f:f\in \mathcal{F}\}$
is a VC-class, according to Lemma~2.6.18 in~\cite{VW96}.
It follows that the class $\mathcal{G}$ is Donsker, i.e., the process $\sqrt{n}(H^{uc}_n-H^{uc})$ converges weakly,
see Theorems~2.6.8 and~2.5.2 in~\cite{VW96}.
It follows by the continuous mapping theorem that
\begin{equation}
\label{eqn:H_uc}
\sqrt{n}\sup_{x\in[0,\tau_H]}|H^{uc}_n(x)-H^{uc}(x)|=O_p(1).
\end{equation}
Hence, we get
\begin{equation}
\label{eq:v minus h limit}
\sup_{t\in[\ell,M]}|v_n(t)-h_s(t)|
\leq
\frac{1}{b}
\sup_{x\in[\ell,M]}|H^{uc}_n(x)-H^{uc}(x)|
\int_{-1}^1 |k'(y)|\,\mathrm{d}y
=
O_p(b^{-1}n^{-1/2}).
\end{equation}
Together with~\eqref{eq:h term}, this proves the first result in~\eqref{eqn:lemma1-1}.

To prove the second result in~\eqref{eqn:lemma1-1}, note that from~\eqref{eqn:v_n w_n} and~\eqref{eq:def Phin} we have
\begin{equation}
\label{eqn:w}
w_n(t;\beta)
=
\frac{1}{n}\sum_{i=1}^n
\mathrm{e}^{\beta'Z_i}\int_{t}^\infty k_b(u-T_i)\,\mathrm{d}u
=
\int_{-1}^1 k(y)\Phi_n(t-by;\beta)\,\mathrm{d}y.
\end{equation}
Consequently, we can write
\[
\begin{split}
&
w_n(t;\hat{\beta}_n)-\Phi(t;\beta_0)\\
&=
\int_{-1}^1 k(y)\left\{\Phi_n(t-by;\hat{\beta}_n)-\Phi(t;\beta_0)\right\}\,\mathrm{d}y\\
&=
\int_{-1}^1 k(y)
\left\{
\Phi_n(t-by;\hat{\beta}_n)-\Phi(t-by;\beta_0)
\right\}
\,\mathrm{d}y
+
\int_{-1}^1 k(y)
\left\{
\Phi(t-by;\beta_0)-\Phi(t;\beta_0)
\right\}
\,\mathrm{d}y.
\end{split}
\]
Similar to~\eqref{eq:taylor h} and~\eqref{eq:h term}, for the second term on the right hand side, we obtain
\[
\sup_{t\in[\ell,M]}
\left|
\int_{-1}^1 k(y)
\left\{
\Phi(t-by;\beta_0)-\Phi(t;\beta_0)
\right\}
\,\mathrm{d}y
\right|
=
O(b^m).
\]
Hence, by means of the triangular inequality,
\[
\sup_{t\in[\ell,M]}
\left|w_n(t;\hat{\beta}_n)-\Phi(t;\beta_0)\right|
\leq
\sup_{x\in\R}
\left|\Phi_n(t;\hat{\beta}_n)-\Phi(t;\beta_0)\right|
+
O(b^m)
=
O_p(n^{-1/2})+O(b^m),
\]
according to Lemma~4 in~\cite{LopuhaaNane2013}.
\end{proof}

\begin{lemma}
\label{lem:cons-naive}
Let $\hat{\lambda}_n^{\mathrm{naive}}$ be defined in~\eqref{def:naive est MSLE}.
Then, under the assumptions of Lemma~\ref{le:1}, for each $0<\ell<M<\tau_H$,
\[
\sup_{x\in[\ell,M]}|\hat{\lambda}_n^{\mathrm{naive}}(x)-\lambda_0(x)|
=
O(b^{m})+O_p(b^{-1}n^{-1/2}).
\]
\end{lemma}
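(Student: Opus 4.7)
The plan is to rewrite the naive estimator and the target as ratios with common denominators and then exploit the uniform estimates from Lemma~\ref{le:1}. Recall that $\hat{\lambda}_n^{\mathrm{naive}}(x)=v_n(x)/w_n(x;\hat{\beta}_n)$ and, by~\eqref{eqn:lambda0}, $\lambda_0(x)=h(x)/\Phi(x;\beta_0)$. Hence
\[
\hat{\lambda}_n^{\mathrm{naive}}(x)-\lambda_0(x)
=
\frac{\bigl(v_n(x)-h(x)\bigr)\Phi(x;\beta_0)+h(x)\bigl(\Phi(x;\beta_0)-w_n(x;\hat{\beta}_n)\bigr)}{w_n(x;\hat{\beta}_n)\,\Phi(x;\beta_0)}.
\]

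First I would handle the denominator. Since $\Phi(\cdot;\beta_0)$ is continuous on $[\ell,M]\subset(0,\tau_H)$, there is a constant $c_0>0$ such that $\Phi(x;\beta_0)\geq c_0$ uniformly on $[\ell,M]$. By the second statement of Lemma~\ref{le:1}, $\sup_{x\in[\ell,M]}|w_n(x;\hat{\beta}_n)-\Phi(x;\beta_0)|=O(b^m)+O_p(b^{-1}n^{-1/2})=o_p(1)$, so with probability tending to one, $w_n(x;\hat{\beta}_n)\geq c_0/2$ uniformly on $[\ell,M]$. Thus the denominator is bounded away from zero uniformly in $x$, with probability tending to one.

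Next I would control the numerator using the triangle inequality together with the fact that $h$ and $\Phi(\cdot;\beta_0)$ are continuous, hence bounded on $[\ell,M]$. Applying both estimates of Lemma~\ref{le:1} yields
\[
\sup_{x\in[\ell,M]}
\Bigl|\bigl(v_n-h\bigr)\Phi+h\bigl(\Phi-w_n(\cdot;\hat{\beta}_n)\bigr)\Bigr|
\leq
\|\Phi\|_\infty\,\|v_n-h\|_\infty+\|h\|_\infty\,\|w_n(\cdot;\hat{\beta}_n)-\Phi\|_\infty,
\]
which is of order $O(b^m)+O_p(b^{-1}n^{-1/2})$. Combining this with the lower bound on the denominator and using that $O_p(1)\cdot(O(b^m)+O_p(b^{-1}n^{-1/2}))=O(b^m)+O_p(b^{-1}n^{-1/2})$ gives the claimed rate.

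The only subtlety I anticipate is the usual one with ratio estimators: the bound on the denominator only holds on an event of probability converging to one, not almost surely, so the argument has to be phrased in terms of $O_p$-statements rather than deterministic inequalities. This is routine and does not affect the stated conclusion, since the event on which $w_n\geq c_0/2$ uniformly has probability tending to one and outside this event the contribution can be absorbed into the $O_p$ term.
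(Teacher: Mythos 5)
Your proposal is correct and follows essentially the same route as the paper: you rewrite the difference as $\bigl(v_n\Phi - h\,w_n\bigr)/(w_n\Phi)$, bound the numerator via the triangle inequality and Lemma~\ref{le:1}, and control the denominator by arguing that $w_n(\cdot;\hat\beta_n)^{-1}=O_p(1)$ uniformly on $[\ell,M]$. The only (cosmetic) difference is that the paper exploits monotonicity of $w_n$ and $\Phi$ to replace the denominator at $x$ by its value at $M$, whereas you argue via uniform convergence that $w_n\geq c_0/2$ on an event of probability tending to one; both yield the same $O_p(1)$ bound.
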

\begin{proof}
By~\eqref{eqn:lambda0} and the definition of $\hat{\lambda}_n^{\mathrm{naive}}$, we have
\[
\begin{split}
\sup_{x\in[\ell,M]}|\hat{\lambda}_n^{\mathrm{naive}}(x)-\lambda_0(x)|
&=
\sup_{x\in[\ell,M]}\left|\frac{v_n(x)}{w_n(x;\hat{\beta}_n)}-\frac{h(x)}{\Phi(x;\beta_0)}\right|\\
&\leq
\frac{\sup_{x\in[\ell,M]}\left|v_n(x)\Phi(x;\beta_0)-h(x)w_n(x;\hat{\beta}_n)\right|}{|w_n(M;\hat{\beta}_n)|\Phi(M;\beta_0)}.
\end{split}
\]
The triangular inequality and Lemma~\ref{le:1} yield
\[
\sup_{x\in[\ell,M]}\left|v_n(x)\Phi(x;\beta_0)-h(x)w_n(x;\hat{\beta}_n)\right|=O(b^{m})+O_p(b^{-1}n^{-1/2}).
\]
and $w_n(M;\hat{\beta}_n)^{-1}=O_p(1).$
The statement follows immediately.
\end{proof}

\begin{lemma}
\label{le:1a}
Suppose that (A1)-(A2) hold.
Let $H^{uc}(t)$ and $\Phi(t;\beta_0)$ be defined in~\eqref{eq:def Huc} and~\eqref{eq:def Phi}, and let $h(t)=\mathrm{d}H^{uc}(t)/\mathrm{d}t$.
Suppose that $h$ and $t\mapsto\Phi(t;\beta_0)$ are $m\geq 1$ times continuously differentiable
and let $k$ be $m$-orthogonal satisfying~\eqref{def:kernel}.
If $b\to0$ and $1/b=O(n^{\alpha})$, for some $\alpha\in(0,1/4)$, then
for each $0<\ell<M<\tau_H$, it holds
\begin{equation}
\label{eqn:lemma1-2}
\sup_{t\in[\ell,M]}|v'_n(t)-h'(t)|\xrightarrow{\p}0,
\qquad
\sup_{t\in[\ell,M]}|w'_n(t;\hat{\beta}_n)-\Phi'(t;\beta_0)|\xrightarrow{\p}0,
\end{equation}
where $v_n$, $w_n$ and $\Phi$ are defined in~\eqref{eqn:v_n w_n} and~\eqref{eq:def Phi}.
\end{lemma}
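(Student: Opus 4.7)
The plan is to adapt the decomposition used in Lemma~\ref{le:1} to the derivatives $v_n'$ and $w_n'$. In each case the error is split into a deterministic kernel-bias term and a stochastic empirical-process term; the main difference with Lemma~\ref{le:1} is that differentiation of the smoothed object costs one extra power of $b^{-1}$.

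For the first assertion, write
\[
v_n'(t) - h'(t) = \bigl(v_n'(t) - h_s'(t)\bigr) + \bigl(h_s'(t) - h'(t)\bigr),
\]
with $h_s = k_b * h$ as in~\eqref{def:hs}. Since $k_b$ is smooth and $h$ has compact support, differentiation under the integral followed by an integration by parts in $u$ gives $h_s'(t) = \int k_b(t-u)\,h'(u)\,\mathrm{d}u$, so the bias $h_s' - h'$ is handled by Taylor expansion of $h'$ combined with the $m$-orthogonality of $k$: for $m \ge 2$ this yields $O(b^{m-1})$ uniformly on $[\ell,M]$, and for $m=1$ it yields $o(1)$ by uniform continuity of $h'$. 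For the stochastic piece, $v_n'(t) - h_s'(t) = \int k_b'(t-u)\,\mathrm{d}(H_n^{uc}-H^{uc})(u)$, and a second integration by parts (the boundary terms vanish by compact support of $k_b$) produces $\int k_b''(t-u)(H_n^{uc}-H^{uc})(u)\,\mathrm{d}u$. A change of variable $y=(t-u)/b$ together with~\eqref{eqn:H_uc} and the bounded integral $\int|k''(y)|\,\mathrm{d}y$ yields $O_p(b^{-2}n^{-1/2})$; the bandwidth condition $1/b = O(n^\alpha)$ with $\alpha < 1/4$ makes this $o_p(1)$.

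For the second assertion, direct differentiation of~\eqref{eqn:v_n w_n} gives $w_n'(t;\beta) = -\tilde w_n(t;\beta)$, where $\tilde w_n(t;\beta) = n^{-1}\sum_{i=1}^n \mathrm{e}^{\beta'Z_i} k_b(t-T_i)$, while $\Phi'(t;\beta_0) = -\psi(t;\beta_0)$ for the continuous function $\psi$ obtained by formally differentiating~\eqref{eq:def Phi}. I split
\[
\tilde w_n(t;\hat\beta_n) - \psi(t;\beta_0)
= \bigl[\tilde w_n(t;\hat\beta_n) - \tilde w_n(t;\beta_0)\bigr]
+ \bigl[\tilde w_n(t;\beta_0) - \E\tilde w_n(t;\beta_0)\bigr]
+ \bigl[(k_b * \psi)(t) - \psi(t)\bigr].
\]
The last term is $O(b^{m-1})$ (or $o(1)$ for $m=1$) by the same Taylor-plus-orthogonality argument, using that $\Phi$ is $m$ times continuously differentiable. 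The middle term can be written as $\int k_b(t-u)\,\mathrm{d}(F_n - F)(u;\beta_0)$ with $F_n(u;\beta_0) = n^{-1}\sum_i \1_{\{T_i \le u\}} \mathrm{e}^{\beta_0'Z_i}$ and $F(u;\beta_0) = \E F_n(u;\beta_0)$; a single integration by parts produces $\int k_b'(t-u)(F_n - F)(u;\beta_0)\,\mathrm{d}u$, which is $O_p(b^{-1}n^{-1/2})$ via a Donsker/VC argument parallel to the one leading to~\eqref{eqn:H_uc}, where assumption~(A2) supplies a square-integrable envelope for the weighted class. The $\beta$-correction term is handled via the mean value theorem $|\mathrm{e}^{\hat\beta_n'Z_i} - \mathrm{e}^{\beta_0'Z_i}| \le |Z_i|\mathrm{e}^{|\tilde\beta'Z_i|}|\hat\beta_n - \beta_0|$, the crude bound $k_b(t-T_i) \le \|k\|_\infty/b$, and $|\hat\beta_n-\beta_0| = O_p(n^{-1/2})$, yielding $O_p(b^{-1}n^{-1/2})$ under~(A2).

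The main obstacle is the stochastic term for $v_n'$: to reduce it to the uniform rate for $H_n^{uc} - H^{uc}$ one is forced to integrate by parts twice, picking up the factor $b^{-2}$ that precisely dictates the bandwidth restriction $\alpha < 1/4$ in the statement. The analysis for $w_n'$ and the replacement of $\beta_0$ by the $\sqrt n$-consistent estimator $\hat\beta_n$ are routine extensions of techniques already used in the proof of Lemma~\ref{le:1}.
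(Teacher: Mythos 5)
Your proposal is correct and follows essentially the same route as the paper: decompose each derivative into a deterministic smoothing-bias term (handled by Taylor expansion plus $m$-orthogonality, or uniform continuity when $m=1$) and a stochastic term (reduced by integration by parts to the uniform $O_p(n^{-1/2})$ rate of $H_n^{uc}-H^{uc}$, resp.\ $\Phi_n-\Phi$, at the cost of one power of $b^{-1}$ per kernel derivative). The only cosmetic difference is that for $w_n'$ the paper works directly with $w_n'(t;\beta)=\int k_b'(t-u)\Phi_n(u;\beta)\,\mathrm{d}u$ and bounds $w_n'(t;\hat\beta_n)-w_s'(t;\beta_0)$ in a single step via $\sup_x|\Phi_n(x;\hat\beta_n)-\Phi(x;\beta_0)|=O_p(n^{-1/2})$, whereas you separate the $\hat\beta_n$-correction from the centered empirical term and invoke a Donsker argument for the weighted indicator class; both are equivalent and rely on the same ingredients from Lemmas~3--4 of \cite{LopuhaaNane2013} and assumption~(A2).
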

\begin{proof}
Let us consider the first statement  of~\eqref{eqn:lemma1-2}.
We write
\[
v'_n(t)-h'(t)
=
v'_n(t)-h'_s(t)+h'_s(t)-h'(t),
\]
where $h_s$ is defined in~\eqref{def:hs}.
For the second term we have
\[
\sup_{t\in[\ell,M]}
\left|h'_s(t)-h'(t)\right|
\leq
\sup_{t\in[\ell,M]}
\int_{-1}^1
|k(y)|\left|h'(t-by)-h'(t)\right|\,\mathrm{d}y\to0,
\]
by the uniform continuity of $h'$.
Moreover, similar to~\eqref{eq:v minus h} and~\eqref{eq:v minus h limit},
\[
\sup_{t\in[\ell,M]}
\left|v'_n(t)-h'_s(t)\right|
\leq
\frac{1}{b^2}
\sup_{x\in[\ell,M]}|H^{uc}_n(x)-H^{uc}(x)|
\int_{-1}^1 |k''(y)|\,\mathrm{d}y
=
O_p(n^{2\alpha-1/2}),
\]
which tends to zero in probability, as $\alpha<1/4$.
To obtain the second statement of~\eqref{eqn:lemma1-2}, first note that from~\eqref{eqn:v_n w_n},
\begin{equation}
\label{eqn:w'_n}
w'_n(t;\hat{\beta}_n)
=
\int k'_b(t-u)\Phi_n(u;\hat{\beta}_n)\,\mathrm{d}u,
\end{equation}
and write
\begin{equation}
\label{eq:decomp wn'}
w'_n(t;\hat{\beta}_n)-\Phi'(t;\beta_0)
=
w'_n(t;\hat{\beta}_n)-w'_s(t;\beta_0)
+
w'_s(t;\beta_0)-\Phi'(t;\beta_0),
\end{equation}
where
\[
w_s(t;\beta_0)=\int k_b(t-u)\Phi(u;\beta_0)\,\mathrm{d}u.
\]
For the second difference on the right hand side of~\eqref{eq:decomp wn'} we have
\begin{equation}
\label{eqn:w'}
\begin{split}
\sup_{t\in[\ell,M]}
\left|w'_s(t;\beta_0)-\Phi'(t;\beta_0)\right|
&=
\sup_{t\in[\ell,M]}
\left|\int_{-1}^1 k(y)\Phi'(t-by;\beta_0)\,\mathrm{d}y-\Phi'(t;\beta_0)\right|\\
&\leq
\sup_{t\in[\ell,M]}
\int_{-1}^1 |k(y)|
\left|
\Phi'(t-by;\beta_0)-\Phi'(t;\beta_0)
\right| \,\mathrm{d}y\to 0,
\end{split}
\end{equation}
by uniform continuity of $\Phi'$.
Furthermore, with~\eqref{eqn:w'_n}, we obtain
\[
\sup_{t\in[\ell,M]}
\left|w'_n(t;\hat{\beta}_n)-w'_s(t;\beta_0)\right|
\leq
\frac{1}{b}\sup_{x\in\R}|\Phi_n(x;\hat{\beta}_n)-\Phi(x;\beta_0)|
\int_{-1}^1|k'(y)|\,\mathrm{d}y\to 0,
\]
because
\begin{equation}
\label{eq:Phihat-Phi0}
\begin{split}
\sup_{x\in\R}|\Phi_n(x;\hat{\beta}_n)-\Phi(x;\beta_0)|
&\leq
\sup_{x\in\R}|\Phi_n(x;\hat{\beta}_n)-\Phi_n(x;\beta_0)|
+
\sup_{x\in\R}|\Phi_n(x;\beta_0)-\Phi(x;\beta_0)|\\
&\leq
\sup_{x\in\R}
\left|
\frac{\partial\Phi_n(x;\beta_n^*)}{\partial\beta}
\right|(\hat\beta_n-\beta_0)
+
O_p(n^{-1/2})
=
O_p(n^{-1/2}),
\end{split}
\end{equation}
due to Lemmas 3 and 4 in~\cite{LopuhaaNane2013}.
Together with~\eqref{eqn:w'} this proves the last result.
\end{proof}

\begin{lemma}
\label{lem:monotone}
Suppose that (A1)-(A2) hold.
Let $H^{uc}(t)$ and $\Phi(t;\beta_0)$ be defined in~\eqref{eq:def Huc} and~\eqref{eq:def Phi}, and let $h(t)=\mathrm{d}H^{uc}(t)/\mathrm{d}t$,
satisfying~\eqref{eqn:lambda0}.
Suppose that $h$ and $t\mapsto\Phi(t;\beta_0)$ are continuously differentiable,
and that~$\lambda'_0$ is uniformly bounded from below by a strictly positive constant.
Let $k$ satisfy~\eqref{def:kernel} and let~$\hat{\lambda}_n^{\mathrm{naive}}$ be defined in~\eqref{def:naive est MSLE}.
If $b\to0$ and $1/b=O(n^{\alpha})$, for some $\alpha\in(0,1/4)$,
then for each $0<\ell<M<\tau_H$,  it holds
\[
\p\big(\hat{\lambda}_n^{\mathrm{naive}}\text{ is increasing on } [\ell,M]\big)\to 1.
\]
\end{lemma}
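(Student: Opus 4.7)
The plan is to differentiate the naive estimator directly and show that its derivative converges uniformly in probability to $\lambda_0'$, which by assumption is bounded away from zero on $[\ell,M]$.

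By the quotient rule, for $x\in[\ell,M]$,
\[
\frac{\mathrm{d}}{\mathrm{d}x}\hat{\lambda}_n^{\mathrm{naive}}(x)
=
\frac{v_n'(x)\,w_n(x;\hat\beta_n)-v_n(x)\,w_n'(x;\hat\beta_n)}{w_n(x;\hat\beta_n)^2}.
\]
First I would control the denominator: since $\Phi(\cdot;\beta_0)$ is continuous and strictly positive on $[0,\tau_H)$, we have $\inf_{x\in[\ell,M]}\Phi(x;\beta_0)>0$, so Lemma~\ref{le:1} yields a strictly positive lower bound on $\inf_{x\in[\ell,M]}w_n(x;\hat\beta_n)$ with probability tending to one. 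In particular, $1/w_n(\cdot;\hat\beta_n)^2$ is uniformly $O_p(1)$ on $[\ell,M]$.

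Next, using Lemma~\ref{le:1} to handle $v_n\to h$, $w_n\to\Phi$ uniformly on $[\ell,M]$, and Lemma~\ref{le:1a} to handle $v_n'\to h'$, $w_n'\to\Phi'(\cdot;\beta_0)$ uniformly in probability, a straightforward algebraic manipulation gives
\[
\sup_{x\in[\ell,M]}
\left|
\frac{\mathrm{d}}{\mathrm{d}x}\hat{\lambda}_n^{\mathrm{naive}}(x)
-
\frac{h'(x)\,\Phi(x;\beta_0)-h(x)\,\Phi'(x;\beta_0)}{\Phi(x;\beta_0)^2}
\right|
\xrightarrow{\p} 0.
\]
Since $\lambda_0(x)=h(x)/\Phi(x;\beta_0)$ by~\eqref{eqn:lambda0}, the limiting expression above is exactly $\lambda_0'(x)$. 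By assumption, $\lambda_0'$ is bounded from below by some constant $c_0>0$ on $[\ell,M]$, so with probability tending to one,
\[
\inf_{x\in[\ell,M]}
\frac{\mathrm{d}}{\mathrm{d}x}\hat{\lambda}_n^{\mathrm{naive}}(x)
\geq
c_0/2>0,
\]
which proves that $\hat{\lambda}_n^{\mathrm{naive}}$ is increasing on $[\ell,M]$ with probability tending to one.

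The only nontrivial ingredient is the uniform convergence of the derivatives in Lemma~\ref{le:1a}, where the bandwidth condition $1/b=O(n^{\alpha})$ with $\alpha<1/4$ is essential to absorb the $b^{-2}$ factor arising from differentiating the kernel twice against the uniform rate $n^{-1/2}$ of the empirical sub-distribution function. Once that is available, the remaining argument is purely the quotient-rule manipulation together with the positive lower bound on $\lambda_0'$.
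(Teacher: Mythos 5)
Your proof is correct and takes essentially the same route as the paper: differentiate the naive estimator via the quotient rule, invoke Lemmas~\ref{le:1} and~\ref{le:1a} for uniform convergence of $v_n,w_n$ and their derivatives, and conclude from the strict positivity of $\lambda_0'$. The only cosmetic difference is that the paper works directly with the sign of the numerator $v_n'w_n - v_n w_n'$ (after checking $w_n>0$ with probability tending to one), thereby avoiding the explicit uniform lower bound on $w_n$ that you establish, whereas you show the full derivative converges uniformly to $\lambda_0'$; both are valid and rely on identical ingredients.
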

\begin{proof}
Note that $w_n(x,\hat{\beta}_n)=0$ if and only if $T_i\leq x-b$,
for all $i=1,\ldots,n$, which happens with probability $H(x-b)^n\leq H(M)^n\to 0$.
This means that with probability tending to one, $w_n(x,\hat{\beta}_n)>0$ for all $x\in[\ell,M]$.
Thus with probability tending to one,
$\hat{\lambda}_n^{\mathrm{naive}}$ is well defined on $[\ell,M]$ and
\begin{equation}
\label{eqn:der.naive}
\frac{\mathrm{d}}{\mathrm{d}x}\hat{\lambda}_n^{\mathrm{naive}}(x)=\frac{v'_n(x)\,w_n(x;\hat{\beta}_n)-v_n(x)\,w'_n(x;\hat{\beta}_n)}{w_n(x;\hat{\beta}_n)^2}.
\end{equation}
In order to prove that $\hat{\lambda}_n^{\mathrm{naive}} $ is increasing on $[\ell,M]$ with probability tending to one, it suffices to show that
\begin{equation}
\label{eq:prob inf}
\p\left(
\inf_{x\in[\ell,M]}
\left\{
v'_n(x)w_n(x;\hat{\beta}_n)-v_n(x)w'_n(x;\hat{\beta}_n)
\right\}\leq 0
\right)\to 0.
\end{equation}
We can write
\[
\begin{split}
&
v'_n(x)w_n(x;\hat{\beta}_n)-v_n(x)w'_n(x;\hat{\beta}_n)\\
&=
w_n(x;\hat{\beta}_n)
\left(v'_n(x)-h'(x)\right)+v_n(x)\left(\Phi'(x;\beta_0)-w'_n(x;\hat{\beta}_n)\right)\\
&\qquad+
h'(x)\left(w_n(x;\hat{\beta}_n)-\Phi(x;\beta_0)\right)+\Phi'(x;\beta_0)\left(h(x)-v_n(x)\right)\\
&\qquad\qquad+
h'(x)\Phi(x;\beta_0)-\Phi'(x;\beta_0)h(x),
\end{split}
\]
where the right hand side can be bounded from below by
\[
\begin{split}
&
-\sup_{x\in[\ell,M]}|v'_n(x)-h'(x)|\sup_{x\in[\ell,M]}|w_n(x;\hat{\beta}_n)|\\
&\qquad-
\sup_{x\in[\ell,M]}|\Phi'(x;\beta_0)-w'_n(x;\hat{\beta}_n)|\sup_{x\in[\ell,M]}|v_n(x)|\\
&\qquad-
\sup_{x\in[\ell,M]}|w_n(x;\hat{\beta}_n)-\Phi(x;\beta_0)|\sup_{x\in[\ell,M]}|h'(x)|\\
&\qquad-
\sup_{x\in[\ell,M]}|h(x)-v_n(x)|\sup_{x\in[\ell,M]}|\Phi'(x;\beta_0)|
+
h'(x)\Phi(x;\beta_0)-\Phi'(x;\beta_0)h(x).
\end{split}
\]
From the proof of Lemma~\ref{le:2} we have that $\sup_{x\in[\ell,M]}|v_n(x)|$ and $\sup_{x\in[\ell,M]}w_n(x;\hat{\beta}_n)$ are~$O_p(1)$,
so that from Lemmas~\ref{le:1a} and~\ref{le:1} (with $m=1$), it follows that the first four terms on the right hand side tend to zero in probability.
Therefore, the probability in~\eqref{eq:prob inf} is bounded by
\[
\p
\left(
X_n\geq \inf_{x\in[\ell,M]}
\left(
h'(x)\Phi(x;\beta_0)-\Phi'(x;\beta_0)h(x)
\right)
\right),
\]
where $X_n=o_p(1)$.
This probability tends to zero, because with~\eqref{eqn:lambda0}, we have
\[
\begin{split}
\inf_{x\in[\ell,M]}
\left(
h'(x)\Phi(x;\beta_0)-\Phi'(x;\beta_0)h(x)
\right)
&=
\inf_{x\in[\ell,M]}\lambda'_0(x)\Phi(x;\beta_0)^2\\
&\geq
\Phi(M;\beta_0)^2
\inf_{x\in[0,\tau_H]}\lambda'_0(x)
>0.
\end{split}
\]
\end{proof}

\begin{lemma}
\label{le:2}
Let $\tilde W_n$, $\tilde V_n$, and $W_0$ be defined by~\eqref{def:tilde Wn tilde V_n} and~\eqref{def:W0},
and
let $H^{uc}$ be defined in~\eqref{eq:def Huc}.
If $b\to0$ and $1/b=O(n^{-1/2})$, then, under the assumptions of Lemma~\ref{le:1} with $m=1$, it holds
\begin{equation}
\label{eqn:lemma2}
\sup_{t\in[0,\tau_H]}
\left|\tilde V_n(t)-H^{uc}(t)\right|\xrightarrow{\p}0,
\qquad
\sup_{t\in[0,\tau_H]}
\left|\tilde W_n(t)-W_0(t)
\right|\xrightarrow{\p}0.
\end{equation}
\end{lemma}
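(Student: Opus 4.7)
The plan is to prove the two uniform convergences separately via deterministic/stochastic decompositions, exploiting Fubini's theorem to rewrite $\tilde V_n$ and $\tilde W_n$ as empirical integrals of explicit functions of $(T,\Delta,Z)$.

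For $\tilde V_n$, Fubini gives $\tilde V_n(t) = \int \delta\,\phi_t(u)\,\mathrm{d}\p_n(u,\delta,z)$ with $\phi_t(u) = \int_0^t k_b(x-u)\,\mathrm{d}x = K_b(t-u) - K_b(-u)$, where $K_b(s) = \int_{-\infty}^s k_b(y)\,\mathrm{d}y$. I split
\[
\tilde V_n(t) - H^{uc}(t) = \bigl[\tilde V_n(t) - \E\tilde V_n(t)\bigr] + \bigl[\E\tilde V_n(t) - H^{uc}(t)\bigr].
\]
For the bias, $\E\tilde V_n(t) = \int \phi_t(u)\,\mathrm{d}H^{uc}(u)$; integration by parts on the $K_b(t-u)$ piece (boundary terms vanish because $H^{uc}(0)=0$ and $K_b$ is compactly supported) yields $\int K_b(t-u)\,\mathrm{d}H^{uc}(u) = \int k_b(t-u) H^{uc}(u)\,\mathrm{d}u$, which converges uniformly on $[0,\tau_H]$ to $H^{uc}(t)$ by uniform continuity of $H^{uc}$; meanwhile $\int K_b(-u)\,\mathrm{d}H^{uc}(u)$ is supported on $u\in[0,b]$ with bounded integrand and is hence $O(b)$. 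For the fluctuation, integration by parts again converts $\tilde V_n(t) - \E\tilde V_n(t) = \int \phi_t(u)\,\mathrm{d}(H^{uc}_n - H^{uc})(u)$ into integrals of $(H^{uc}_n - H^{uc})$ against $\pm k_b$, so it is bounded uniformly by $\|H^{uc}_n - H^{uc}\|_\infty\int|k| = O_p(n^{-1/2})$ via~\eqref{eqn:H_uc}.

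For $\tilde W_n$, Fubini gives $\tilde W_n(t) = \int \mathrm{e}^{\hat\beta_n'z}\psi_t(u)\,\mathrm{d}\p_n$ with $\psi_t(u) = \int_0^t [1 - K_b(x-u)]\,\mathrm{d}x$, while $W_0(t) = \int \mathrm{e}^{\beta_0'z}\min(t,u)\,\mathrm{d}\p$ since $\min(t,u) = \int_0^t \1_{\{u\geq x\}}\,\mathrm{d}x$ for $u\geq 0$. I decompose $\tilde W_n(t) - W_0(t) = I_1 + I_2 + I_3$ with
\[
I_1 = \int (\mathrm{e}^{\hat\beta_n'z} - \mathrm{e}^{\beta_0'z})\psi_t(u)\,\mathrm{d}\p_n,\quad
I_2 = \int \mathrm{e}^{\beta_0'z}[\psi_t(u) - \min(t,u)]\,\mathrm{d}\p_n,\quad
I_3 = \int \mathrm{e}^{\beta_0'z}\min(t,u)\,\mathrm{d}(\p_n - \p).
\]
A first-order Taylor expansion in $\beta$, combined with $\hat\beta_n - \beta_0 = O_p(n^{-1/2})$, the uniform bound $\psi_t \leq \tau_H$, and (A2) (which bounds $\E[|Z|\mathrm{e}^{\beta'Z}]$ uniformly in $\beta$ near $\beta_0$ by Cauchy–Schwarz), gives $\sup_t|I_1| = O_p(n^{-1/2})$. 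Since $1 - K_b(x-u)$ approximates $\1_{\{x\leq u\}}$ with error supported on $\{|x-u|\leq b\}$ and uniformly bounded, $|\psi_t(u) - \min(t,u)| \leq Cb$ uniformly in $t,u$, and so $\sup_t|I_2| = O_p(b)$. For $I_3$, interchanging the integrals via the representation of $\min(t,u)$ yields $I_3(t) = \int_0^t [\Phi_n(x;\beta_0) - \Phi(x;\beta_0)]\,\mathrm{d}x$, bounded uniformly by $\tau_H \sup_x|\Phi_n(x;\beta_0) - \Phi(x;\beta_0)| = O_p(n^{-1/2})$ via~\eqref{eqn:Phi}.

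The main obstacle is obtaining uniform control on the \emph{entire} interval $[0,\tau_H]$, since kernel smoothing typically produces non-negligible boundary effects (Lemma~\ref{le:1} only yields uniform rates on compact subintervals $[\ell,M]$). The integration-by-parts arguments above exploit the fact that the outer integration in $\mathrm{d}x$ defining $\tilde V_n$ and $\tilde W_n$ smooths out these endpoint issues, and the boundary contribution from $u=0$ vanishes because $H^{uc}(0)=0$ and $k_b$ has compact support. A secondary point is the dependence of $\tilde W_n$ on $\hat\beta_n$, which is isolated in $I_1$ and handled via the crude but sufficient bound $\psi_t\leq\tau_H$.
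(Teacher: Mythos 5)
Your proof is correct, but it takes a genuinely different route from the paper's. The paper proves both uniform limits by bounding the sup of the integrated process by the $L^1$-distance of the integrands, i.e.\ $\sup_t|\tilde V_n(t)-H^{uc}(t)|\leq\int_0^{\tau_H}|v_n(u)-h(u)|\,\mathrm{d}u$, then splits the domain of integration into a compact interior (controlled by the uniform rates of Lemma~\ref{le:1}) and two $\epsilon$-boundary strips on which it uses only $\sup_u|v_n(u)|=O_p(1)$ and boundedness of $h$; the proof for $\tilde W_n$ is analogous, and the argument concludes by an $\epsilon$-small argument since $\epsilon$ was arbitrary. You instead exploit the extra integration in $\mathrm{d}x$ directly: Fubini plus integration by parts converts $\tilde V_n-\E\tilde V_n$ into an integral of $H_n^{uc}-H^{uc}$ against $k_b$, giving a clean $O_p(n^{-1/2})$, with the bias handled by uniform continuity; and for $\tilde W_n$ your three-term decomposition $I_1+I_2+I_3$ isolates the $\hat\beta_n$-effect, the kernel-versus-indicator approximation, and the empirical-versus-true measure, each with an explicit rate. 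Your route avoids the paper's $\epsilon$-splitting entirely, does not require the separate verification that $\sup_u|v_n(u)|$ and $\sup_u|w_n(u;\hat\beta_n)|$ are $O_p(1)$, and as a bonus delivers quantitative rates ($O_p(n^{-1/2})+O(b)+o(1)$) rather than the bare $o_p(1)$ of the paper. Two small points worth noting: in the integration by parts for the bias it is cleaner to integrate over all of $\mathbb{R}$ with $H^{uc}$ extended to be constant outside $[0,\tau_H]$, so that the boundary term at $u\to\infty$ vanishes because $K_b(t-u)\to0$, not only because of $H^{uc}(0)=0$; and the bound $\psi_t\leq\tau_H$ holds only if $k\geq0$, otherwise it should be $|\psi_t|\leq\tau_H(1+\int|k|)$, but this constant is harmless. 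Both proofs share the (slightly informal) step of bounding $\E\bigl[|Z|\mathrm{e}^{\tilde\beta_{ni}'Z}\bigr]$ by $\sup_{|\beta-\beta_0|\leq\epsilon}\E[|Z|\mathrm{e}^{\beta'Z}]$ despite $\tilde\beta_{ni}$ being data-dependent, so this is not a deficiency specific to your argument.
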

\begin{proof}
To prove the first result in~\eqref{eqn:lemma2}, we take $0<\epsilon<\tau_H$ arbitrarily and write
\begin{equation}
\label{eq:bound Huc minus Vn}
\begin{split}
&\sup_{t\in[0,\tau_H]}|\tilde V_n(t)-H^{uc}(t)|
\leq
\int_0^{\tau_H} |v_n(u)-h(u)|\,\mathrm{d}u\\
&=\int_0^\epsilon |v_n(u)-h(u)|\,\mathrm{d}u+\int_\epsilon^{\tau_H-\epsilon} |v_n(u)-h(u)|\,\mathrm{d}u+\int_{\tau_H-\epsilon}^{\tau_H} |v_n(u)-h(u)|\,\mathrm{d}u\\
&\leq
2\epsilon\sup_{u\in[0,\tau_H]}|v_n(u)|
+
2\epsilon\sup_{u\in[0,\tau_H]}|h(u)|
+
(\tau_H-2\epsilon)\sup_{u\in[\epsilon,\tau_H-\epsilon]}|v_n(u)-h(u)|.
\end{split}
\end{equation}
Since $h$ is bounded and the last term tends to zero in probability, according to Lemma~\ref{le:1} with $m=1$,
it suffices to prove that $\sup_{u\in[0,\tau_H]}|v_n(u)|=O_p(1)$.
By definition and the triangular inequality we have
\[
\begin{split}
|v_n(t)|
&=
\left|
\int
k_b\left(t-u\right)\,\mathrm{d}H^{uc}_n(u)\right|\\
&\leq
b^{-1}\left|
H^{uc}_n((t+b)\wedge \tau_H)-H^{uc}_n((t-b)\vee 0)
\right|
\sup_{y\in[-1,1]}|k(y)|\\
&\leq
b^{-1}
\Bigg\{
\left|
H^{uc}_n((t+b)\wedge \tau_H)-H^{uc}((t+b)\wedge \tau_H)
-
H^{uc}_n((t-b)\vee 0)+H^{uc}((t-b)\vee 0)\right|\\
&\qquad\qquad\qquad\qquad+
H^{uc}((t+b)\wedge \tau_H)-H^{uc}((t-b)\vee 0)
\Bigg\}
\sup_{y\in[-1,1]}|k(y)|\\
&\leq
2
\left\{
b^{-1}
\sup_{y\in[0,\tau_H]}|H^{uc}_n(y)-H^{uc}(y)|
+
2
\sup_{u\in[0,\tau_H]}|h(u)|
\right\}
\sup_{y\in[-1,1]}|k(y)|.
\end{split}
\]
Using~\eqref{eqn:H_uc}, it follows that the right hand side of the previous inequality is bounded in probability.
For the second result in~\eqref{eqn:lemma2}, similar to~\eqref{eq:bound Huc minus Vn} we have
\[
\begin{split}
\sup_{t\in[0,\tau_H]}
\left|\tilde W_n(t)-W_0(t)
\right|
&\leq
\int_0^{\tau_H}
\left|w_n(u;\hat\beta_n)-\Phi(u;\beta_0)\right|
\,\mathrm{d}u\\
&\leq
2\epsilon\sup_{u\in[0,\tau_H]}|w_n(u;\hat\beta_n)|
+
2\epsilon\sup_{u\in[0,\tau_H]}|\Phi(u;\beta_0)|\\
&\qquad+
(\tau_H-2\epsilon)\sup_{u\in[\epsilon,\tau_H-\epsilon]}\left|w_n(u;\hat\beta_n)-\Phi(u;\beta_0)\right|.
\end{split}
\]
By using Lemma~\ref{le:1} with $m=1$ and the fact that $\Phi(u;\beta_0)$ is bounded, it remains to
handle the first term on right hand side.
Since
\begin{equation}
\label{eq:bound int kb}
\left|
\int_{t}^{\infty}k_b(s-u)\,\mathrm{d}s
\right|
=
\left|
\int_{(t-u)/b}^{\infty}k(y)\,\mathrm{d}y
\right|
\leq
2\sup_{y\in[-1,1]}|k(y)|,
\end{equation}
and $k_b(t-u)=0$, for $u<t-b$,
we have
\[
\begin{split}
|w_n(t;\hat\beta_n)|
&=
\left|
\int
\mathrm{e}^{\hat{\beta}'_nz}
\int_{t}^{\infty}k_b(s-u)\,\mathrm{d}s
\,\mathrm{d}\p_n(u,\delta,z)
\right|
\\
&\leq
2\sup_{y\in[-1,1]}|k(y)|
\int_{t-b}^{\infty}\mathrm{e}^{\hat{\beta}'_nz}\,\mathrm{d}\p_n(u,\delta,z)
=
2\Phi_n(t-b;\hat{\beta}_n)
\sup_{y\in[-1,1]}|k(y)|,
\end{split}
\]
whereas Lemma~3 in~\cite{LopuhaaNane2013} gives that $\sup_{x\in\R}\Phi_n(x;\hat{\beta}_n)=O_p(1)$.
This establishes the second result in~\eqref{eqn:lemma2}.
\end{proof}

\begin{proof}[Proof of Corollary~\ref{cor:MSLE=naive}]
According to Lemma~\ref{lem:cons-naive}, Lemma~\ref{lem:monotone}, and Lemma~\ref{le:2} together with~\eqref{eqn:lambda0},
conditions~\eqref{cond b}-\eqref{cond d} of Lemma~\ref{lem:coincide} are satisfied, with
$X_n=\tilde W_n$, $Y_n=\tilde V_n$, and~$\hat\tau=\sup\{t\geq 0:w_n(t;\hat\beta_n)>0\}$,
and condition~\eqref{cond a} of Lemma~\ref{lem:coincide} is trivially fulfilled with $X_n=\tilde W_n$.
Hence, the corollary follows from Lemma~\ref{lem:coincide}.
\end{proof}

\begin{lemma}
\label{lemma:distr}
Suppose that (A1)-(A2) hold.
Fix $x\in(0,\tau_H)$.
Let $H^{uc}(t)$ and $\Phi(t:\beta_0)$ be defined in~\eqref{eq:def Huc} and~\eqref{eq:def Phi}, and let $h(t)=\mathrm{d}H^{uc}(t)/\mathrm{d}t$,
satisfying~\eqref{eqn:lambda0}.
Suppose that $h$ and $t\mapsto\Phi(t;\beta_0)$ are $m\geq 2$ times continuously differentiable
and that~$\lambda'_0$ is uniformly bounded from below by a strictly positive constant.
Let $k$ be $m$-orthogonal satisfying~\eqref{def:kernel}.
Let~$v_n$ and~$w_n$ be defined in~\eqref{eqn:v_n w_n} and suppose that $n^{1/(2m+1)}b\to c>0$.
Then
\[
n^{m/(2m+1)}
\left(
\begin{bmatrix}
w_n(x;\hat\beta_n)\\
v_n(x)
\end{bmatrix}
-
\begin{bmatrix}
\Phi(x;\beta_0)\\
h(x)
\end{bmatrix}
\right)
\to
N\left(\begin{bmatrix}
             \mu_1 \\
             \mu_2 \\
           \end{bmatrix},\begin{bmatrix}
0 & 0\\
0 & \sigma^2
\end{bmatrix}\right)
\]
where
\[
\begin{bmatrix}
             \mu_1 \\
             \mu_2 \\
           \end{bmatrix}
=
\frac{(-c)^m}{m!}\int_{-1}^1 k(y)y^m\,\mathrm{d}y
\begin{bmatrix}
\Phi^{(m)}(x;\beta_0)\\
h^{(m)}(x)
\end{bmatrix},
\qquad
\sigma^2=\frac{h(x)}c
\int_{-1}^1 k^2(y)\,\mathrm{d}y.
\]
\end{lemma}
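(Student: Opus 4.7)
The plan is to decompose each component into a deterministic smoothing bias and a stochastic fluctuation, to show that the fluctuation of the $w_n$ coordinate vanishes after rescaling (which yields the degenerate first row and column of the limiting covariance matrix), and to apply a univariate central limit theorem to the fluctuation of the $v_n$ coordinate, concluding by Slutsky's theorem.

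By the identity~\eqref{eqn:w}, I would split
\[
w_n(x;\hat\beta_n)-\Phi(x;\beta_0)
=
\int_{-1}^1 k(y)\bigl[\Phi_n(x-by;\hat\beta_n)-\Phi(x-by;\beta_0)\bigr]\,\mathrm{d}y
+\int_{-1}^1 k(y)\bigl[\Phi(x-by;\beta_0)-\Phi(x;\beta_0)\bigr]\,\mathrm{d}y.
\]
The first integral is bounded in absolute value by $\sup_t|\Phi_n(t;\hat\beta_n)-\Phi(t;\beta_0)|\,\int|k|=O_p(n^{-1/2})$ thanks to~\eqref{eq:Phihat-Phi0}. The second is a deterministic bias and, by a Taylor expansion of order $m$ around $x$ combined with the $m$-orthogonality of $k$ (exactly as in~\eqref{eq:taylor h}), equals $\tfrac{(-b)^m}{m!}\Phi^{(m)}(x;\beta_0)\int k(y)y^m\,\mathrm{d}y+o(b^m)$. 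Since $m\geq 2$ forces $n^{m/(2m+1)}\cdot n^{-1/2}\to 0$, and since $n^{m/(2m+1)}b^m\to c^m$, the scaled first coordinate converges \emph{in probability} to the constant $\mu_1$.

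For the second coordinate, I write $v_n(x)-h(x)=(v_n(x)-h_s(x))+(h_s(x)-h(x))$ with $h_s$ as in~\eqref{def:hs}. The bias $h_s(x)-h(x)$ is treated exactly as in~\eqref{eq:taylor h} and contributes $\mu_2$ after rescaling. Using that $h_s(x)=\mathbb{E}[\Delta k_b(x-T)]$, the stochastic part is $v_n(x)-h_s(x)=n^{-1}\sum_{i=1}^n \xi_{n,i}$ with $\xi_{n,i}=\Delta_i k_b(x-T_i)-h_s(x)$ i.i.d., mean zero, and a change of variables gives $\mathrm{Var}(\xi_{n,1})=b^{-1}h(x)\int k^2(y)\,\mathrm{d}y+O(1)$. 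The Lindeberg condition is trivial: $|\xi_{n,1}|\leq 2b^{-1}\|k\|_\infty$ while $\sqrt{n\mathrm{Var}(\xi_{n,1})}\sim\sqrt{n/b}\to\infty$, so the truncation indicator is eventually zero. The classical CLT then yields $n^{m/(2m+1)}(v_n(x)-h_s(x))\xrightarrow{d}N(0,\sigma^2)$, where the factor $\sigma^2=h(x)/c\cdot\int k^2(y)\,\mathrm{d}y$ arises from $n^{2m/(2m+1)}/(nb)\to 1/c$.

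The main point to pin down is the joint convergence. Because the first coordinate (after rescaling) converges in probability to the deterministic constant $\mu_1$, while the second converges in distribution to $N(\mu_2,\sigma^2)$, Slutsky's theorem delivers the two-dimensional Gaussian limit with exactly the degenerate covariance matrix stated. The only delicate step is replacing $\beta_0$ by $\hat\beta_n$ in $w_n$: this is absorbed into the $O_p(n^{-1/2})$ bound by~\eqref{eq:Phihat-Phi0}, which relies on assumption (A2) to control $\partial_\beta\Phi_n$ uniformly in a neighbourhood of $\beta_0$ together with the $\sqrt n$-consistency of $\hat\beta_n$ from~\cite{Tsiatis81}; the assumed $C^m$ smoothness of $h$ and $t\mapsto\Phi(t;\beta_0)$ guarantees that the Taylor remainders are genuinely $o(b^m)$ at the fixed point $x$.
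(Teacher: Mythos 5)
Your proof is correct, but it takes a genuinely different route from the paper's. The paper assembles a vector-valued triangular array $Y_{ni}=(Y_{ni,1},Y_{ni,2})$ and applies a multivariate Lindeberg--Feller CLT directly, explicitly computing that $\mathrm{Var}(Y_{ni,1})=O(n^{-2(m+1)/(2m+1)})$, $\mathrm{Cov}(Y_{ni,1},Y_{ni,2})=o(n^{-1})$, and $\mathrm{Var}(Y_{ni,2})=n^{-1}\sigma^2+o(n^{-1})$, so that the summed covariance matrix converges to the stated degenerate matrix. You instead observe that the degenerate row/column of that matrix reflects the stronger fact that the rescaled $w_n$-coordinate converges \emph{in probability} to the constant $\mu_1$ (since the stochastic part of $w_n(x;\hat\beta_n)-\Phi(x;\beta_0)$ is $O_p(n^{-1/2})$, which vanishes under the $n^{m/(2m+1)}$ scaling because $m/(2m+1)<1/2$), and you handle only the $v_n$-coordinate with a scalar Lindeberg--Feller CLT, concluding by Slutsky. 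Both arguments are sound and rely on the same inputs (the Taylor/orthogonality expansion~\eqref{eq:taylor h} for the biases, \eqref{eq:Phihat-Phi0} to replace $\hat\beta_n$ by $\beta_0$ at rate $n^{-1/2}$, and a change of variables to identify the limiting variance). Your Slutsky route is arguably tidier because it sidesteps the cross-covariance calculation and makes the degeneracy of the covariance matrix transparent, while the paper's joint CLT is more mechanical but extends without modification if one later wanted a nondegenerate joint limit. One small wording note: the CLT you invoke for $v_n$ is the Lindeberg--Feller CLT for triangular arrays (the summands depend on $n$ through $b$), not the classical i.i.d.\ CLT, but you do verify the Lindeberg condition, so the substance is already in place.
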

\begin{proof}
First we show that $n^{m/(2m+1)}(w_n(x;\hat{\beta}_n)-w_n(x;\beta_0))\to 0$ in probability,
which enables us to replace $w_n(x;\hat{\beta}_n)$ with $w_n(x;\beta_0)$ in the statement.
From~\eqref{eqn:v_n w_n}, together with~\eqref{eq:bound int kb}, we find
\[
\begin{split}
\left|w_n(x;\hat{\beta}_n)-w_n(x;\beta_0)\right|
&\leq
\frac1n
\sum_{i=1}^n \left|\mathrm{e}^{\hat{\beta}'_nZ_i}-\mathrm{e}^{\beta'_0Z_i}\right|
\left|
\int_x^{\infty}k_b(u-T_i)\,\mathrm{d}u
\right|\\
&\leq
2\sup_{y\in[-1,1]}|k(y)|
\frac1n
\sum_{i=1}^n
|Z_i|\mathrm{e}^{\tilde{\beta}'_{n,i}Z_i}\left|\hat{\beta}_n-\beta_0\right|,
\end{split}
\]
for some $|\tilde{\beta}_{n,i}-\beta_0|\leq|\hat{\beta}_n-\beta_0|=O_p(n^{-1/2})$.
Furthermore, for all $M>0$,
\[
\p\left(
\frac{1}{n}\sum_{i=1}^n
|Z_i|\mathrm{e}^{\tilde{\beta}'_{n,i}Z_i}\geq M
\right)
\leq
\frac{1}{nM}\sum_{i=1}^n
\E
\left[|Z_i|\mathrm{e}^{\tilde{\beta}'_{n,i}Z_i} \right]
\leq
\frac{1}{M}\sup_{|\beta-\beta_0|\leq \epsilon}\E\left[|Z|\mathrm{e}^{\beta'Z} \right],
\]
where $\sup_{|\beta-\beta_0|\leq \epsilon}\E[|Z|\mathrm{e}^{\beta'Z}]<\infty$ according to assumption~(A2).
It follows that
\[
n^{m/(2m+1)}
\left(
w_n(x;\hat{\beta}_n)-w_n(x;\beta_0)
\right)
=
O_p(n^{-1/(4m+2)}).
\]

Now, define
\[
Y_{ni}=
\begin{bmatrix}
Y_{ni,1}\\
Y_{ni,2}
\end{bmatrix}
=
n^{-(m+1)/(2m+1)}
\begin{bmatrix}
\mathrm{e}^{\beta'_0Z_i}\int_x^{\infty}k_b(s-T_i)\,\mathrm{d}s
\\
k_b(x-T_i)\Delta_i
\end{bmatrix}.
\]
By a Taylor expansion, using that
$h$ is $m$ times continuously differentiable and that $k$ is $m$-orthogonal, as in~\eqref{eq:taylor h} we obtain
\begin{equation}
\label{eq:expec Yni2}
\begin{split}
\E\left[Y_{ni,2}\right]
&=
n^{-(m+1)/(2m+1)}
\int_{-1}^1 k(y)h(x-by)\,\mathrm{d}y\\
&=
n^{-(m+1)/(2m+1)}
\left(h(x)+\frac{(-b)^m}{m!}h^{(m)}(x)\int_{-1}^1k(y)y^m\,\mathrm{d}y + o(b^m)\right).
\end{split}
\end{equation}
Similarly, with Fubini we get
\begin{equation}
\label{eq:expec Yni1}
\begin{split}
\E\left[Y_{ni,1}\right]
&=
n^{-(m+1)/(2m+1)}
\int \mathrm{e}^{\beta'_0z}
\int_x^{\infty}k_b(s-u)\,\mathrm{d}s\,\mathrm{d}\p(u,\delta,z)\\
&=
n^{-(m+1)/(2m+1)}
\int_{-1}^1
\left(\int
\mathrm{e}^{\beta'_0z}\1_{\{u\geq x-by\}}\,\mathrm{d}\p(u,\delta,z)
\right)
k(y)\,\mathrm{d}y\\
&=
n^{-(m+1)/(2m+1)}
\int_{-1}^1 k(y)\Phi(x-by;\beta_0)\,\mathrm{d}y\\
&=
n^{-(m+1)/(2m+1)}
\left(
\Phi(x;\beta_0)+\frac{(-b)^m}{m!}\Phi^{(m)}(x;\beta_0)\int_{-1}^1 k(y)y^m\,\mathrm{d}y + o(b^m)
\right).
\end{split}
\end{equation}
Hence, we have
\[
\E\left[Y_{ni}\right]
=
n^{-(m+1)/(2m+1)}
\begin{bmatrix}
\Phi(x;\beta_0)\\
h(x)
\end{bmatrix}
+
n^{-1}
\begin{bmatrix}
\mu_1 \\
\mu_2 \\
\end{bmatrix}
+
o(n^{-1}),
\]
and we can write
\[
n^{-(m+1)/(2m+1)}
\left(
\begin{bmatrix}
w_n(x;\hat\beta_n)\\
v_n(x)
\end{bmatrix}
-
\begin{bmatrix}
\Phi(x;\beta_0)\\
h(x)
\end{bmatrix}
\right)
=
\begin{bmatrix}
\mu_1 \\
\mu_2 \\
\end{bmatrix}
+
\sum_{i=1}^n
\big(
Y_{ni}-\E\left[Y_{ni}\right]
\big)
+
o(1).
\]
It remains to show that $\sum_{i=1}^n \big(Y_{ni}-\E\left[Y_{ni}\right]\big)$ converges in distribution to a bivariate normal distribution with mean zero.
From~\eqref{eq:expec Yni1} we have,
\begin{equation}
\label{eq:EYn1^2}
\begin{split}
&
\mathrm{Var}(Y_{ni,1})
=
\E\left[Y_{ni,1}^2\right]+O(n^{-2(m+1)/(2m+1)})\\
&=
n^{-2(m+1)/(2m+1)}
\int \mathrm{e}^{2\beta'_0z}
\left(\int_x^{\infty}k_b(s-u)\,\mathrm{d}s\right)^2\,\mathrm{d}\p(u,\delta,z)+O(n^{-2(m+1)/(2m+1)})\\
&=
O(n^{-2(m+1)/(2m+1)}),
\end{split}
\end{equation}
using that, with~\eqref{eq:bound int kb},
\[
\begin{split}
\int
\mathrm{e}^{2\beta'_0z}\left(\int_x^{\infty}k_b(s-u)\,\mathrm{d}s\right)^2\,\mathrm{d}\p(u,\delta,z)
&\leq
\left(
2\sup_{y\in[-1,1]}|k(y)|
\right)^2
\int \mathrm{e}^{2\beta'_0z}\,\mathrm{d}\p(u,\delta,z)\\
&=
\left(
2\sup_{y\in[-1,1]}|k(y)|
\right)^2
\Phi(0;2\beta_0)<\infty.
\end{split}
\]
Moreover,
\[
\begin{split}
&
\mathrm{Cov}(Y_{ni,1},Y_{ni,2})
=
\E\left[Y_{ni,1}Y_{ni,2}\right]+O(n^{-2(m+1)/(2m+1)})\\
&=
n^{-2(m+1)/(2m+1)}
\int \delta\mathrm{e}^{\beta'_0z}
\left(\int_x^{\infty}k_b(s-u)\,\mathrm{d}s\right)
k_b(x-u)\,\mathrm{d}\p(u,\delta,z)+O_p(n^{-2(m+1)/(2m+1)})\\
&=
o(n^{-1})+O(n^{-2(m+1)/(2m+1)}),
\end{split}
\]
because, with~\eqref{eq:bound int kb},
\[
\begin{split}
&
\left|
b\int \delta\mathrm{e}^{\beta'_0z}
\left(\int_x^{\infty}k_b(s-u)\,\mathrm{d}s\right)
k_b(x-u)\,\mathrm{d}\p(u,\delta,z)
\right|
\\
&\leq
2\sup_{y\in[-1,1]}|k(y)|
\int \1_{\{x-b\leq u\leq x+b\}}
\mathrm{e}^{\beta'_0z}
\left|
k\left(\frac{x-u}{b}\right)
\right|\,\mathrm{d}\p(u,\delta,z)\\
&\leq
2\left(\sup_{y\in[-1,1]}|k(y)|\right)^2
\big( \Phi(x-b;\beta_0)-\Phi(x+b;\beta_0)\big)
\to 0.
\end{split}
\]
Once again, by a Taylor expansion, from~\eqref{eq:expec Yni2}, we obtain
\begin{equation}
\label{eq:EYn2^2}
\begin{split}
\text{Var}(Y_{ni,2})
&=
\E
\left[Y_{ni,2}^2\right]+O(n^{-2(m+1)/(2m+1)})\\
&=
n^{-2(m+1)/(2m+1)}b^{-1}
\int_{-1}^1 k^2(y)h(x-by)\,\mathrm{d}y+O(n^{-2(m+1)/(2m+1)})\\
&=
n^{-1}\sigma^2+o(n^{-1}).
\end{split}
\end{equation}
It follows that
\[
\sum_{i=1}^n
\mathrm{Cov}( Y_{ni})
=
\begin{bmatrix}
 0 & 0\\
 0 & \sigma^2
\end{bmatrix}+o(1).
\]
Furthermore, since
\[
|Y_{ni}|^2
=
n^{-2(m+1)/(2m+1)}\left(
\mathrm{e}^{2\beta'_0Z_i}
\left(\int_x^{\infty}k_b(s-T_i)\,\mathrm{d}s\right)^2
+
k^2_b(x-T_i)\Delta_i
\right),
\]
with~\eqref{eq:bound int kb}, we obtain
\[
\begin{split}
\sum_{i=1}^n
\E\left[|Y_{ni}|^2\1_{\{|Y_{ni}|>\epsilon \}} \right]
&\leq
\left(2\sup_{y\in[-1,1]}|k(y)|\right)^2
n^{-1/(2m+1)}\E\left[\mathrm{e}^{2\beta'_0Z}\right]\\
&\qquad+
n^{-2(m+1)/(2m+1)}b^{-2}\sup_{y\in[-1,1]}|k(y)|
\sum_{i=1}^n\p\left(|Y_{ni}|>\epsilon\right),
\end{split}
\]
where the right hand side tends to zero, because $\E[\mathrm{e}^{2\beta'_0Z}]=\Phi(0;2\beta_0)<\infty$ and,  with~\eqref{eq:EYn1^2} and~\eqref{eq:EYn2^2},
we have
\[
\sum_{i=1}^n
\p\left(|Y_{ni}|>\epsilon\right)
\leq
\epsilon^{-2}
\sum_{i=1}^n
\E|Y_{ni}|^2
=
O(1).
\]
By the multivariate Lindeberg-Feller central limit theorem, we get
\[
\sum_{i=1}^n \big(Y_{ni}-\E\left[Y_{ni}\right]\big)
\stackrel{d}{\to}
N\left(\begin{bmatrix}
             \mu_1 \\
             \mu_2 \\
           \end{bmatrix},\begin{bmatrix}
0 & 0\\
0 & \sigma^2
\end{bmatrix}\right),
\]
which finishes the proof.
\end{proof}

\begin{proof}[Proof of Theorem~\ref{theo:distrMS}]
By definition of $\hat{\lambda}_n^{\mathrm{naive}}(x)$ in~\eqref{def:naive est MSLE} together with~\eqref{eqn:lambda0},
we can write
\[
\hat{\lambda}_n^{\mathrm{naive}}(x)-\lambda_0(x)
=
\phi\Big(w_n(x;\hat\beta_n),v_n(x)\Big)
-
\phi\Big(\Phi(x;\beta_0),\lambda_0(x)\Phi(x;\beta_0)\Big)
\]
with $\varphi(w,v)=v/w$.
The asymptotic distribution of $\hat{\lambda}_n^{\mathrm{naive}}(x)$ then follows from an application of the delta method to the result in Lemma~\ref{lemma:distr}.
Then, by Corollary~\ref{cor:MSLE=naive}, this also gives the asymptotic distribution of $\hat{\lambda}^{MS}_n(x)$.
\end{proof}

\begin{proof}[Proof of Theorem~\ref{theo:MSLE asymptotic equivalence}]
First note that by means of~\eqref{eqn:lambda0}, it follows from the assumptions of the theorem that~$h(t)=\mathrm{d}H^{uc}(t)/\mathrm{d}t$ is $m\geq 2$ times
continuously differentiable.
We write
\[
\begin{split}
&
n^{m/(2m+1)}
\left(
\hat{\lambda}_n^{MS}(x)-\tilde{\lambda}_n^{SM}(x)
\right)\\
&=
n^{m/(2m+1)}
\left(
\hat{\lambda}_n^{\mathrm{naive}}(x)-\tilde{\lambda}_n^{SM}(x)
\right)
+
n^{m/(2m+1)}
\left(
\hat{\lambda}_n^{MS}(x)-\hat{\lambda}_n^{\mathrm{naive}}(x)
\right).
\end{split}
\]
By Corollary~\ref{cor:MSLE=naive}, the second term on the right hand side converges to zero in probability.
Furthermore, as can be seen from the proof of Theorem~3.5 in~\cite{LopuhaaMustaSI2016},
\[
n^{m/(2m+1)}
\left(
\tilde{\lambda}^{SM}_n(x)-\lambda_0(x)
\right)
=
\widetilde{\mu}
+
n^{m/(2m+1)}
\int
\frac{\delta k_b(x-u)}{\Phi(u;\beta_0)}\,\mathrm{d}(\p_n-\p)(u,\delta,z)+o_p(1),
\]
with $\widetilde{\mu}$ from~\eqref{def:mutilde}.
From the proof of Lemma~\ref{lemma:distr}, we have
\[
\hat{\lambda}_n^{\mathrm{naive}}(x)-\lambda_0(x)
=
\phi\Big(w_n(x;\hat\beta_n),v_n(x)\Big)
-
\phi\Big(\Phi(x;\beta_0),\lambda_0(x)\Phi(x;\beta_0)\Big),
\]
where $\phi(w,v)=v/w$ and
\[
n^{m/(2m+1)}
\left(
\begin{bmatrix}
w_n(x;\hat\beta_n)\\
v_n(x)
\end{bmatrix}
-
\begin{bmatrix}
\Phi(x;\beta_0)\\
h(x)
\end{bmatrix}
\right)
=
\begin{bmatrix}
\mu_1\\
\mu_2
\end{bmatrix}
+
\begin{bmatrix}
Z_{n1}\\
Z_{n2}
\end{bmatrix}
+
o(1),
\]
with $Z_{n1}=o_P(1)$ and
\[
\begin{bmatrix}
\mu_1\\
\mu_2
\end{bmatrix}
=
\frac{(-c)^m}{m!}\int_{-1}^1 k(y)y^m\,\mathrm{d}y
\begin{bmatrix}
\Phi^{(m)}(x;\beta_0)\\
h^{(m)}(x)
\end{bmatrix}.
\]
Then with a Taylor expansion it follows that
\[
\begin{split}
n^{m/(2m+1)}
\left(
\hat{\lambda}_n^{\mathrm{naive}}(x)-\lambda_0(x)
\right)
&=
\begin{bmatrix}
  -\dfrac{\lambda_0(x)}{\Phi(x;\beta_0)} & \dfrac1{\Phi(x;\beta_0)}
\end{bmatrix}
\left(
\begin{bmatrix}
\mu_1\\
\mu_2
\end{bmatrix}
+
\begin{bmatrix}
Z_{n1}\\
Z_{n2}
\end{bmatrix}
\right)
+
o_p(1)\\
&=
\mu
+
\frac{Z_{n2}}{\Phi(x;\beta_0)}
+
o_p(1),
\end{split}
\]
where $\mu$ is from Theorem~\ref{theo:distrMS}.
Moreover, from the proof of Lemma~\ref{lemma:distr} it can be seen that
\[
\begin{split}
\frac{Z_{n2}}{\Phi(x;\beta_0)}
&=
\frac1{\Phi(x;\beta_0)}
n^{m/(2m+1)}
\int \delta k_b(x-u)\,(\p_n-\p)(u,\delta,z)+o_P(1)\\
&=
n^{m/(2m+1)}
\int \frac{\delta k_b(x-u)}{\Phi(u;\beta_0)}\,(\p_n-\p)(u,\delta,z)\\
&\quad
+
n^{m/(2m+1)}
\int \delta k_b(x-u)
\left(
\frac{1}{\Phi(x;\beta_0)}
-
\frac{1}{\Phi(u;\beta_0)}
\right)
\,(\p_n-\p)(u,\delta,z)+o_P(1)\\
&=
n^{m/(2m+1)}
\int \frac{\delta k_b(x-u)}{\Phi(u;\beta_0)}\,(\p_n-\p)(u,\delta,z)
+o_P(1),
\end{split}
\]
because
\[
n^{m/(2m+1)}
\int \delta k_b(x-u)
\left(
\frac{1}{\Phi(x;\beta_0)}
-
\frac{1}{\Phi(u;\beta_0)}
\right)
\,(\p_n-\p)(u,\delta,z)
=
\sum_{i=1}^n
\left(
X_{ni}-\E\left[X_{ni}\right]
\right)
\]
with
\[
X_{ni}
=
n^{-(m+1)/(2m+1)}
\Delta_i k_b(x-T_i)
\left(
\frac{1}{\Phi(x;\beta_0)}
-
\frac{1}{\Phi(T_i;\beta_0)}
\right),
\]
where similar to the proof of Lemma~\ref{lemma:distr},
\[
\begin{split}
\E\left[X_{ni}^2\right]
&=
n^{-2(m+1)/(2m+1)}
\int k_b^2(x-u)
\left(
\frac{1}{\Phi(x;\beta_0)}
-
\frac{1}{\Phi(u;\beta_0)}
\right)^2 h(u)\,\mathrm{d}u\\
&=
n^{-2(m+1)/(2m+1)}b^{-1}
\int k^2(y)
\left(
\frac{1}{\Phi(x;\beta_0)}
-
\frac{1}{\Phi(x-by;\beta_0)}
\right)^2 h(x-by)\,\mathrm{d}y\\
&=
o(n^{-1}).
\end{split}
\]
We conclude that
\[
\begin{split}
n^{m/(2m+1)}
\left(
\hat{\lambda}_n^{\mathrm{naive}}(x)-\lambda_0(x)
\right)
&=
\mu
+
n^{m/(2m+1)}
\int \frac{\delta k_b(x-u)}{\Phi(u;\beta_0)}\,(\p_n-\p)(u,\delta,z)
+o_P(1)\\
&=
\mu-\widetilde{\mu}
+
n^{m/(2m+1)}
\left(
\tilde{\lambda}^{SM}_n(x)-\lambda_0(x)
\right)
+
o_p(1)
\end{split}
\]
which proves the first statement in the theorem.
The second statement is immediate using the asymptotic equivalence in~\eqref{eq:asymp norm SM SG}.
\end{proof}

\subsection{Proofs for Section~\ref{sec:GS}}
\label{subsec:proofs GS}
\begin{lemma}
\label{lem:monotone2}
Suppose that (A1)-(A2) hold.
Let $\lambda_0$ be continuously differentiable, with $\lambda'_0$ uniformly bounded from below by a strictly positive constant,
and let $k$ satisfy~\eqref{def:kernel}.
If $b\to0$ and $1/b=O(n^{\alpha})$, for some $\alpha\in(0,1/4)$, then for each $0<\ell<M<\tau^*$,  it holds
\[
\p
\left(
\tilde{\lambda}_n^{\mathrm{naive}}\text{ is increasing on } [\ell,M]
\right)\to 1.
\]
\end{lemma}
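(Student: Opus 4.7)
The plan is to show that, with probability tending to one, $\frac{\mathrm{d}}{\mathrm{d}x}\tilde{\lambda}_n^{\mathrm{naive}}(x)$ is uniformly bounded below by a strictly positive constant on $[\ell,M]$; this immediately yields the desired monotonicity. Starting from the equivalent representation $\tilde{\lambda}_n^{\mathrm{naive}}(x)=\int k_b(x-u)\,\mathrm{d}\Lambda_n(u)$, obtained from~\eqref{def:vn GS} by integration by parts, I would differentiate in $x$ and split into a deterministic and a stochastic contribution:
\[
\frac{\mathrm{d}}{\mathrm{d}x}\tilde{\lambda}_n^{\mathrm{naive}}(x)
=\int k'_b(x-u)\lambda_0(u)\,\mathrm{d}u+\int k'_b(x-u)\,\mathrm{d}(\Lambda_n-\Lambda_0)(u).
\]

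For the deterministic piece, I would apply one further integration by parts. For $x\in[\ell,M]$ and $b$ small enough that $[x-b,x+b]\subset(0,\infty)$, the boundary terms vanish and one obtains $\int_{-1}^{1}k(y)\lambda'_0(x-by)\,\mathrm{d}y$. By uniform continuity of $\lambda'_0$ together with $\int k=1$, this tends to $\lambda'_0(x)$ uniformly on $[\ell,M]$, and the lower bound hypothesis on $\lambda'_0$ then guarantees that this piece exceeds $c_0/2$ uniformly for all $n$ large enough, where $c_0>0$ denotes the assumed uniform lower bound of~$\lambda'_0$.

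The stochastic piece I would handle by a Stieltjes integration by parts. For $b<\min(\ell,\tau^*-M)$ the kernel $k'_b(x-\cdot)$ vanishes at $u=0$ and $u=\tau^*$, so no boundary contributions arise, and the integral equals $\int k''_b(x-u)(\Lambda_n-\Lambda_0)(u)\,\mathrm{d}u$. A change of variable then yields the estimate
\[
\sup_{x\in[\ell,M]}\left|\int k'_b(x-u)\,\mathrm{d}(\Lambda_n-\Lambda_0)(u)\right|
\leq b^{-2}\bigl(\textstyle\int_{-1}^{1}|k''(y)|\,\mathrm{d}y\bigr)\sup_{u\in[0,\tau^*]}|\Lambda_n(u)-\Lambda_0(u)|.
\]
Applying~\eqref{eqn:Breslow} (valid since $\tau^*<\tau_H$), this is $O_p(b^{-2}n^{-1/2})$, and the hypothesis $1/b=O(n^{\alpha})$ with $\alpha<1/4$ makes this $O_p(n^{2\alpha-1/2})=o_p(1)$ uniformly in $x\in[\ell,M]$.

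Combining the two bounds gives $\inf_{x\in[\ell,M]}\frac{\mathrm{d}}{\mathrm{d}x}\tilde{\lambda}_n^{\mathrm{naive}}(x)\geq c_0/2-o_p(1)$, which is strictly positive with probability tending to one, proving the lemma. The main technical point is the justification of the Stieltjes integration by parts on $[0,\tau^*]$ for a step-function integrator and the matching of the resulting $b^{-2}n^{-1/2}$ rate to the bandwidth range: the single integration by parts is forced because one must transfer the derivative off $k'_b$ onto the available uniform bound for~$\Lambda_n-\Lambda_0$, and the factor $b^{-2}$ arises from the two $b^{-1}$'s in the kernel derivatives. The threshold $\alpha<1/4$ in the hypothesis is therefore exactly what is required.
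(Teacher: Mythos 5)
Your proof is correct and takes essentially the same route as the paper's: the paper also writes $\frac{\mathrm{d}}{\mathrm{d}x}\tilde{\lambda}_n^{\mathrm{naive}}(x)$ as a deterministic piece equal to $\int_{-1}^1 k(y)\lambda_0'(x-by)\,\mathrm{d}y$ (presented as $\lambda_0'(x)+\int k(y)\{\lambda_0'(x-by)-\lambda_0'(x)\}\,\mathrm{d}y$) plus the stochastic piece $b^{-2}\int k'((x-u)/b)\,\mathrm{d}(\Lambda_n-\Lambda_0)(u)$, controls the deterministic piece by uniform continuity and the positive lower bound on $\lambda_0'$, and bounds the stochastic piece via one more integration by parts and~\eqref{eqn:Breslow} to get $O_p(b^{-2}n^{-1/2})=o_p(1)$ for $\alpha<1/4$. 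The only cosmetic difference is that you differentiate first and then split $\mathrm{d}\Lambda_n=\mathrm{d}\Lambda_0+\mathrm{d}(\Lambda_n-\Lambda_0)$, whereas the paper splits first (in~\eqref{eq:decomp naive GS}) and then differentiates; also the paper introduces an $M'\in(M,\tau_H)$ with $M+b<M'$ rather than working with $[0,\tau^*]$ directly, but both choices are valid since $\tau^*<\tau_H$.
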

\begin{proof}
From~\eqref{def:naive est GS}, it follows with integration by parts that
\begin{equation}
\label{eq:decomp naive GS}
\begin{split}
\tilde{\lambda}_n^{\mathrm{naive}}(x)
&=
\int k_b'(x-u)\Lambda_0(u)\,\mathrm{d}u
+
\int k_b'(x-u)
\left(
\Lambda_n(u)-\Lambda_0(u)
\right)\,\mathrm{d}u\\
&=
\lambda_0(x)+\int k_b(x-u)
\big\{
\lambda_0(u)-\lambda_0(x)
\big\}\,\mathrm{d}u
+
\int k_b(x-u)
\,\mathrm{d}\left(\Lambda_n-\Lambda_0\right)(u),\\
\end{split}
\end{equation}
so that
\begin{equation}
\label{eqn:der.naive2}
\begin{split}
\frac{\mathrm{d}}{\mathrm{d}x}\tilde{\lambda}_n^{\mathrm{naive}}(x)
=
\lambda'_0(x)
&+
\int_{-1}^1 k(y)
\big\{
\lambda'_0(x-by)-\lambda'_0(x)
\big\}\,\mathrm{d}y\\
&+
\frac{1}{b^2}\int k'\left(\frac{x-u}{b}\right)\,\mathrm{d}\left(\Lambda_n-\Lambda_0\right)(u).
\end{split}
\end{equation}
By assumption, the first term on the right hand side of~\eqref{eqn:der.naive2} is
bounded from below by a strictly positive constant and the second term converges to zero because of the continuity of~$\lambda'_0$.
Moreover, let $0<M<M'<\tau_H$, so that for $n$ sufficiently large $M+b<M'$.
Then, the second term on the right hand side of~\eqref{eqn:der.naive2} is bounded from above in absolute value by
\[
\frac{1}{b^2}\sup_{x\in[0,M']}\left|\Lambda_n(x)-\Lambda_0(x)\right|
\sup_{y\in[-1,1]}|k''(y)|
=
O_p(n^{2\alpha-1/2})=o_p(1),
\]
according to~\eqref{eqn:Breslow} and the fact that $\alpha<1/4$.
We conclude that $\tilde{\lambda}_n^{\mathrm{naive}}$ is increasing on $[\ell,M]$ with probability tending to one.
\end{proof}

\begin{proof}[Proof of Corollary~\ref{cor:ISBE=naive}]
We apply Lemma~\ref{lem:coincide}.
Condition~\eqref{cond a} is trivial with $X_n(t)=t$.
Furthermore, for every fixed $t\in(0,\tau^*)$,
we have for sufficiently large $n$, that $t\in(b,\tau^*-b)$ and
\begin{equation}
\label{eq:Lemma53 GS}
\begin{split}
&
\tilde{\lambda}_n^{\mathrm{naive}}(t)-\lambda_0(t)\\
&=
\int k_b(t-u)\lambda_0(u)\,\mathrm{d}u-\lambda_0(t)
+
\int k_b(t-u)\,\mathrm{d}\left(\Lambda_n(u)-\Lambda_0(u)\right)\\
&=
\int_{-1}^1 k(y)
\left\{
\lambda_0(t-by)-\lambda_t(x)
\right\}\,\mathrm{d}y
+
b^{-1}
\int_{-1}^1
\left(\Lambda_n(t-by)-\Lambda_0(t-by)\right)
k'(y)\,\mathrm{d}y\\
&=
o_p(1),
\end{split}
\end{equation}
by continuity of $\lambda_0$ and~\eqref{eqn:Breslow}, which proves condition~\eqref{cond b} of Lemma~\ref{lem:coincide}.
Condition~\eqref{cond c} follows from Lemma~\ref{lem:monotone2}.
Finally, for $t\in[0,\tau^*]$,
\[
\begin{split}
&
\left|\hat\Lambda_n^s(t)-\Lambda_0(t)\right|\\
&=
\left|\int_{(t-b)\vee 0}^{(t+b)\wedge \tau_H} k_b(t-u)
\left(\Lambda_n(u)-\Lambda_0(u)\right)\,\mathrm{d}u
+
\int_{(t-b)\vee 0}^{(t+b)\wedge \tau_H}k_b(t-u)\Lambda_0(u)\,\mathrm{d}u-\Lambda_0(t)
\right|\\
&\leq \left|
\int_{-1}^{t/b\wedge 1} k(y)
\left(\Lambda_n(t-by)-\Lambda_0(t-by)\right)\,\mathrm{d}y\right|
+
\left|
\int_{-1}^{t/b\wedge 1} k(y)\Lambda_0(t-by)\,\mathrm{d}y-\Lambda_0(t)\right|\\
&\leq
\sup_{x\in[0,\tau^*+b]}\left|\Lambda_n(t)-\Lambda_0(t)\right|\int_{-1}^1 |k(y)|\,\mathrm{d}y
+
\left|
\int_{-1}^{t/b\wedge 1} k(y)\Lambda_0(t-by)\,\mathrm{d}y-\Lambda_0(t)\right|.
\end{split}
\]
Since there exists $M<\tau_H$ such that, for sufficiently large $n$, $\tau^*+b<M$,
according to~\eqref{eqn:Breslow}, the first term on the right hand side is of the order $O_p(n^{-1/2})$.
For the second term we distinguish between $t\geq b$ and $t<b$.
When $t\geq b$, then with~\eqref{def:kernel},
\[
\begin{split}
\left|
\int_{-1}^{t/b} k(y)\Lambda_0(t-by)\,\mathrm{d}y-\Lambda_0(t)
\right|
&\leq
\int_{-1}^1
|k(y)|
\Big|
\Lambda_0(t-by)-\Lambda_0(t)
\Big|\,\mathrm{d}y\\
&=
b
\sup_{t\in[0,\tau_H]}|\lambda_0(t)|
\int_{-1}^1
|k(y)|
\,\mathrm{d}y
\to0,
\end{split}
\]
uniformly for $t\in[b,\tau^*]$.
When $t<b$, then again with~\eqref{def:kernel}, we can write
\[
\begin{split}
\left|
\int_{-1}^{t/b} k(y)\Lambda_0(t-by)\,\mathrm{d}y-\Lambda_0(t)
\right|
&\leq
\int_{-1}^{t/b} |k(y)|
\Big|
\Lambda_0(t-by)-\Lambda_0(t)
\Big|\,\mathrm{d}y
+
\Lambda_0(t)
\int_{t/b}^1 |k(y)|\,\mathrm{d}y\\
&\leq
O(b)+b\lambda_0(b)
\int_{-1}^1 |k(y)|\,\mathrm{d}y
\to0,
\end{split}
\]
uniformly for $t\in[0,b]$.
It follows that
\begin{equation}
\label{eq:Lemma54 GS}
\sup_{t\in[0,\tau^*]}
\left|\hat\Lambda_n^s(t)-\Lambda_0(t)\right|=o_P(1),
\end{equation}
which proves condition~\eqref{cond d} of Lemma~\ref{lem:monotone2}.
The result now follows from Lemma~\ref{lem:monotone2}.
\end{proof}

\begin{proof}[Proof of Theorem~\ref{theo:as.distrGS}]
From~\eqref{eq:Lemma53 GS}, similar to~\eqref{eq:taylor h},
we find
\[
\begin{split}
&
\tilde{\lambda}_n^{\mathrm{naive}}(x)-\lambda_0(x)\\
&=
\int_{-1}^1 k(y)
\left\{
\lambda_0(x-by)-\lambda_0(x)
\right\}\,\mathrm{d}y
+
b^{-1}
\int_{-1}^1
\left(\Lambda_n(x-by)-\Lambda_0(x-by)\right)
k'(y)\,\mathrm{d}y\\
&=
\frac{(-b)^m}{m!}
\int_{-1}^1
\lambda_0^{(m)}(\xi_{xy})k(y)y^m\,\mathrm{d}y
+
b^{-1}
\int_{-1}^1
\left(\Lambda_n(x-by)-\Lambda_0(x-by)\right)
k'(y)\,\mathrm{d}y,
\end{split}
\]
for some $|\xi_{xy}-x|\leq |by|$.
It follows that
\[
\begin{split}
&
\sup_{x\in[\ell,M]}
\left|
\tilde{\lambda}_n^{\mathrm{naive}}(x)-\lambda_0(x)
\right|\\
&\leq
\frac{b^m}{m!}
\sup_{t\in[0,\tau_H]}\left|\lambda_0^{(m)}(t)\right|
\int_{-1}^1
|k(y)||y|^m\,\mathrm{d}y
+
b^{-1}
\sup_{x\in[\ell,M]}
\left|
\Lambda_n(x)-\Lambda_0(x)
\right|
\int_{-1}^1
|k'(y)|\,\mathrm{d}y
=
o_p(1).
\end{split}
\]
Similar to~\eqref{eq:h term}, the first term on the right hand side is of the order $O(b^m)$,
and according to~\eqref{eqn:Breslow} the second term is of the order $O_p(b^{-1}n^{-1/2})$.
The first statement now follows directly from Corollary~\ref{cor:ISBE=naive}.

To obtain the asymptotic distribution, note that from~\eqref{eq:decomp naive GS}, \eqref{eqn:lambda0} and~\eqref{eqn:Breslow}, we have
\begin{equation}
\label{eqn:asymptotic-mean2}
\begin{split}
&n^{m/(2m+1)}
\left(
\tilde{\lambda}_n^{\mathrm{naive}}(x)-\lambda_0(x)
\right)\\
&=
n^{m/(2m+1)}
\left(
\int k_b(x-u)\,\lambda_0(u)\,\mathrm{d}u-\lambda_0(x)
\right)\\
&\quad+
n^{m/(2m+1)}
\int k_b(x-u)\frac{\delta}{\Phi(u;\beta_0)}\mathrm{d}(\p_n-\p)(u,\delta,z)\\
&\qquad+
n^{-(m+1)/(2m+1)}
\sum_{i=1}^n k_b(x-T_i)\Delta_i
\left(
\frac{1}{\Phi_n(T_i;\hat{\beta}_n)}-\frac{1}{\Phi(T_i;\beta_0)}
\right).
\end{split}
\end{equation}
We find that,
the first term in the right hand side of~\eqref{eqn:asymptotic-mean2} converges to $\mu$,
since
\begin{equation}
\label{eqn:asymptotic_mean}
\begin{split}
&
n^{m/(2m+1)}
\left(
\int k_b(x-u)\,\lambda_0(u)\,\mathrm{d}u-\lambda_0(x)
\right)\\
&=
n^{m/(2m+1)}
\int_{-1}^1 k(y)
\left\{
\lambda_0(x-by)-\lambda_0(x)
\right\}\,\mathrm{d}y\\
&=
n^{m/(2m+1)}
\frac{(-b)^m}{m!}
\int_{-1}^1
\lambda_0^{(m)}(\xi_{xy})k(y)y^m\,\mathrm{d}y
\to
\frac{(-c)^m}{m!}
\lambda_0^{(m)}(x)
\int_{-1}^1 k(y)y^m\,\mathrm{d}y,
\end{split}
\end{equation}
for some $|\xi_{xy}-x|\leq |by|$.
Let $0<M<M'<\tau_H$, so that $x+b\leq M'$ for sufficiently large $n$.
Because $1/\Phi_n(M';\hat{\beta}_n)=O_p(1)$, similar to~\eqref{eq:Phihat-Phi0}
\[
\sup_{u\in[0,M']}
\left|
\frac{1}{\Phi_n(u;\hat{\beta}_n)}-\frac{1}{\Phi(u;\beta_0)}
\right|
\leq
\sup_{u\in[0,M']}
\left|
\frac{\Phi_n(u;\hat{\beta}_n)-\Phi(u;\beta_0)}{\Phi_n(M';\hat{\beta}_n)\Phi(M';\beta_0)}
\right|
=
O_p(n^{-1/2}),
\]
and similar to~\eqref{eq:EYn2^2}
\[
\text{Var}
\left(
n^{-(m+1)/(2m+1)}
\sum_{i=1}^n
\left|
k_b(x-T_i)
\right|\Delta_i\right)
=
O(n^{-1}),
\]
so that the last term on the right hand side of~\eqref{eqn:asymptotic-mean2} converges to zero in probability.
The second term on the right hand side of~\eqref{eqn:asymptotic-mean2} can be written as
\[
\sum_{i=1}^n
\left(
Y_{ni}-\E\left[Y_{ni}\right]
\right),
\quad
Y_{ni}
=
n^{-(m+1)/(2m+1)}
k_b(x-T_i)\frac{\Delta_i}{\Phi(T_i;\beta_0)}.
\]
where similar to~\eqref{eq:EYn2^2},
\begin{equation}
\label{eq:EYn2^2 GS}
\begin{split}
\text{Var}(Y_{ni})
&=
\E\left[Y_{ni}^2\right]+O(n^{-2(m+1)/(2m+1)})\\
&=
n^{-2(m+1)/(2m+1)}b^{-1}
\int_{-1}^1 \frac{k^2(y)h(x-by)}{\Phi^2(x-by;\beta_0)}\,\mathrm{d}y
+
O\left(n^{-2(m+1)/(2m+1)}\right)\\
&=
n^{-1}\sigma^2+o(n^{-1}).
\end{split}
\end{equation}
Moreover,
\[
\sum_{i=1}^n
\E\left[|Y_{ni}|^2\1_{\{|Y_{ni}|>\epsilon \}} \right]
\leq
n^{-2(m+1)/(2m+1)}b^{-2}
\sup_{y\in[-1,1]}|k(y)|
\sum_{i=1}^n\p\left(|Y_{ni}|>\epsilon\right),
\]
where the right hand side tends to zero, because with~\eqref{eq:EYn2^2 GS},
\[
\sum_{i=1}^n
\p\left(|Y_{ni}|>\epsilon\right)
\leq
\sum_{i=1}^n
\frac{\E|Y_{ni}|^2}{\epsilon^2}
=
O(1).
\]By Lindeberg-Feller central limit theorem, we obtain
\[
\sum_{i=1}^n \left(Y_{ni}-\E\left[Y_{ni}\right]\right)\xrightarrow{d}N(0,\sigma^2),
\]
which determines the asymptotic distribution of $\tilde{\lambda}_n^{\mathrm{naive}}(x)$.
Then, by Corollary~\ref{cor:ISBE=naive}, this also gives the asymptotic distribution of $\tilde{\lambda}^{GS}_n(x)$.
\end{proof}

\begin{proof}[Proof of Theorem~\ref{theo:ISBE asymptotic equivalence}]
We write
\[
\begin{split}
&
n^{m/(2m+1)}
\left(
\tilde{\lambda}_n^{SG}(x)-\tilde{\lambda}_n^{GS}(x)
\right)\\
&=
n^{m/(2m+1)}
\left(
\tilde{\lambda}_n^{SG}(x)-\tilde{\lambda}_n^{naive}(x)
\right)
+
n^{m/(2m+1)}
\left(
\tilde{\lambda}_n^{naive}(x)-\tilde{\lambda}_n^{GS}(x)
\right).
\end{split}
\]
By Corollary~\ref{cor:ISBE=naive}, the second term on the right hand side converges to zero in probability.
Furthermore, as can be seen from the proof of Theorem~3.5 in~\cite{LopuhaaMustaSI2016},
\[
\begin{split}
&
n^{m/(2m+1)}
\tilde{\lambda}^{SG}_n(x)\\
&=
n^{m/(2m+1)}
\int k_b(x-u)\,\mathrm{d}\Lambda_0(u)
+
n^{m/(2m+1)}
\int
\frac{\delta k_b(x-u)}{\Phi(u;\beta_0)}\,\mathrm{d}(\p_n-\p)(u,\delta,z)+o_p(1).
\end{split}
\]
Similarly, from the proof of Theorem~\ref{theo:as.distrGS}, we have
\[
\begin{split}
&
n^{m/(2m+1)}
\tilde{\lambda}_n^{naive}(x)
=
n^{m/(2m+1)}
\int k_b'(x-u)\Lambda_n(u)\,\mathrm{d}u\\
&=
n^{m/(2m+1)}
\int k_b(x-u)\,\mathrm{d}\Lambda_n(u)\\
&=
n^{m/(2m+1)}
\int k_b(x-u)\,\mathrm{d}\Lambda_0(u)
+
n^{m/(2m+1)}
\int k_b(x-u)\,\mathrm{d}(\Lambda_n(u)-\Lambda_0(u))\\
&=
n^{m/(2m+1)}
\int k_b(x-u)\,\mathrm{d}\Lambda_0(u)
+
n^{m/(2m+1)}
\int
\frac{\delta k_b(x-u)}{\Phi(u;\beta_0)}\,\mathrm{d}(\p_n-\p)(u,\delta,z)+o_p(1).
\end{split}
\]
From this it immediately follows that
\[
n^{m/(2m+1)}
\left(
\tilde{\lambda}_n^{SG}(x)-\tilde{\lambda}_n^{naive}(x)
\right)
=
o_p(1),
\]
and hence, with Corollary~\ref{cor:ISBE=naive}, also
\[
n^{m/(2m+1)}
\left(
\tilde{\lambda}_n^{SG}(x)-\tilde{\lambda}_n^{GS}(x)
\right)
=
o_p(1).
\]
The second statement about $\hat{\lambda}_n^{SM}(x)$, is immediate using the asymptotic equivalence in~\eqref{eq:asymp norm SM SG}.
\end{proof}

\paragraph{\textbf{Acknowledgement.}}
We like to thank two anonymous referees for their comments and suggestions that helped to improve the lay out and content of the paper.

\section*{References}
\bibliography{shapeconstrained-estimation}

\end{document}